\tikzstyle{vertex}=[circle, draw, inner sep=0pt, minimum size=4pt]
\newcommand{\vertex}{\node[vertex]}
\tikzstyle{ijvxw}=[circle, draw, inner sep=0pt, fill=white, minimum size=2.5pt]
\newcommand{\ijvxw}{\node[ijvxw]}
\tikzstyle{ijvxb}=[circle, draw, inner sep=0pt, fill=black, minimum size=2.5pt]
\newcommand{\ijvxb}{\node[ijvxb]}
\tikzstyle{pvx}=[circle, draw, inner sep=0pt, fill=orange!20, minimum size=4pt]
\newcommand{\pvx}{\node[pvx]}
\tikzstyle{cvx}=[circle, draw, inner sep=0pt, color=black!25, fill=black!25, minimum size=1.5pt]
\newcommand{\cvx}{\node[cvx]}
\definecolor{Cerulean}{cmyk}{0.94,0.11,0,0}
\definecolor{ForestGreen}{cmyk}{0.91,0,0.88,0.12}
\definecolor{RedViolet}{cmyk}{0.07,0.90,0,0.34}
\definecolor{RawSienna}{cmyk}{0,0.72,1,0.45}
\definecolor{applegreen}{rgb}{0.55,0.71,0.0}
\definecolor{darkblue}{rgb}{0.0, 0.0, 0.7}
\newtheorem{theorem}{Theorem}[section]
\newtheorem{lemma}[theorem]{Lemma}
\newtheorem{proposition}[theorem]{Proposition}
\newtheorem{corollary}[theorem]{Corollary}
\theoremstyle{definition}
\newtheorem{definition}[theorem]{Definition}
\newtheorem{example}[theorem]{Example}
\newtheorem{remark}[theorem]{Remark}
\newcommand\ba{\mathbf{a}}
\newcommand\be{\mathbf{e}}
\newcommand\bv{\mathbf{v}}
\newcommand\by{\mathbf{y}}
\newcommand\bbN{\mathbb{N}}
\newcommand\bbR{\mathbb{R}}
\newcommand\bbZ{\mathbb{Z}}
\newcommand\calA{\mathcal{A}}
\newcommand\calC{\mathcal{C}}
\newcommand\calD{\mathcal{D}}
\newcommand\calF{\mathcal{F}}
\newcommand\calG{\mathcal{G}}
\newcommand\calL{\mathcal{L}}
\newcommand\calO{\mathcal{O}}
\newcommand\calP{\mathcal{P}}
\newcommand\calQ{\mathcal{Q}}
\newcommand\calR{\mathcal{R}}
\newcommand\calT{\mathcal{T}}
\newcommand\horiz{\mathrm{horiz}}
\newcommand\inedge{\mathrm{in}}
\newcommand\outedge{\mathrm{out}}
\newcommand\In{\mathrm{In}}
\newcommand\Out{\mathrm{Out}}
\newcommand\inv{^{-1}}
\DeclareMathOperator{\car}{car}
\DeclareMathOperator{\Cat}{Cat}
\DeclareMathOperator{\Nar}{Nar}
\DeclareMathOperator{\vol}{vol}
\title[A unifying framework]{A unifying framework for the $\nu$-Tamari lattice and principal order ideals in Young's lattice}
\author[von Bell]{Matias von Bell}
\address[von Bell]{Department of Mathematics\\
         University of Kentucky\\
%\url{http://www.ms.uky.edu/~mkvo222/}
}
\email{matias.vonbell@uky.edu}
\author[Gonz\'alez D'Le\'on]{Rafael S. Gonz\'alez D'Le\'on}
\address[Gonz\'alez D'Le\'on]{Escuela de Ciencias Exactas e Ingenier\'ia\\
         Universidad Sergio Arboleda\\
%        Bogot\'a\\Colombia\\
%\url{http://dleon.combinatoria.co}
}
\email{rafael.gonzalezl@usa.edu.co}
\author[Mayorga Cetina]{Francisco A. Mayorga Cetina}
\address[Mayorga Cetina]{Escuela de Ciencias Exactas e Ingenier\'ia\\
         Universidad Sergio Arboleda\\
%        Bogot\'a\\Colombia\\
%\url{}
}
\email{francisco.mayorga@correo.usa.edu.co}
\author[Yip]{Martha Yip}
\address[Yip]{Department of Mathematics\\
         University of Kentucky\\
%        Lexington, KY 40506--0027\\
%\url{http://www.ms.uky.edu/~myip/}
}
\email{martha.yip@uky.edu}
\begin{document}
\parskip=5pt

\begin{abstract}
We present a unifying framework in which both the $\nu$-Tamari lattice, introduced by Pr\'eville-Ratelle and Viennot, and principal order ideals in Young's lattice indexed by lattice paths $\nu$, are realized as the dual graphs of two combinatorially striking triangulations of a family of flow polytopes which we call the $\nu$-caracol flow polytopes. The first triangulation gives a new geometric realization of the $\nu$-Tamari complex introduced by Ceballos, Padrol and Sarmiento. We use the second triangulation to show that the $h^*$-vector of the $\nu$-caracol flow polytope is given by the $\nu$-Narayana numbers, extending a result of M\'esz\'aros when $\nu$ is a staircase lattice path.
Our work generalizes and unifies results on the dual structure of two subdivisions of a polytope studied by Pitman and Stanley.

\vspace{.5cm}
\noindent{\bf \keywordsname}: flow polytope, triangulation, $\nu$-Dyck path, $\nu$-Tamari lattice, Young's lattice 
\end{abstract}

\maketitle 

%-----------------------------------------------------------------------------
\section{Introduction}\label{sec.intro}
%-----------------------------------------------------------------------------
Flow polytopes are a family of beautiful mathematical objects. They appear in optimization theory as the feasible sets in maximum flow problems and they also appear in other areas of mathematics including representation theory and algebraic combinatorics.
In the following, $G=(V,E)$ denotes a {\em connected directed graph} with vertex set $V=\{1,2,\ldots, n+1\}$ and edge multiset $E$ with $m$ edges, with $n,m \in \bbN$. 
We assume that any edge $(i,j)\in E$ is directed from $i$ to $j$ whenever $i<j$ and hence $G$ is {\em acyclic}. 
At each vertex $i\in V$ we assign a net flow $a_i \in \bbZ$ satisfying the balance condition  $\sum_{i=1}^{n+1}a_i=0$, and hence $a_{n+1}=-\sum_{i=1}^{n}a_i$. 
For $\ba=(a_1,\ldots,a_n,-\hbox{$\sum_{i=1}^na_i$}) \in \bbZ^{n+1}$, an {\em $\ba$-flow on $G$}  is a tuple $(x_e)_{e\in E} \in \bbR^m_{\geq0}$ such that
$$\sum_{e \in \outedge(j)} x_e - \sum_{e \in \inedge(j)}  x_e = a_j$$
where $\inedge(j)$ and $\outedge(j)$ respectively denote the set of incoming and outgoing edges at $j$, for $j=1,\ldots, n$. 
In what follows, by a graph $G$ we mean a connected directed acyclic graph whose sets $\outedge(1)$, $\inedge(n+1)$, and $\inedge(j)$ and $\outedge(j)$ for $j=2,\dots,n$, are nonempty. 
The {\em flow polytope of $G$ with net flow $\ba$} is the set $\calF_G(\ba)$ of $\ba$-flows on $G$.  
In this article we only consider flow polytopes with \emph{unitary flow} $\ba=\be_1 - \be_{n+1}$, where $\be_i$ for $i=1,\dots,n+1$ denotes the standard basis in $\bbR^{n+1}$, and we will abbreviate the flow polytope of $G$ with unitary flow as $\calF_G$. 
In this case, the only integral points of $\calF_G$ are its vertices, which correspond to the unitary flows along maximal directed paths of $G$ from vertex $1$ to $n+1$. 
Such maximal paths are called {\em routes} (see Figure~\ref{fig.length_clique}).

A {\em $d$-simplex} is the convex hull of $d+1$ points in general position in $\bbR^k$  with $k\ge d$.
A (lattice) {\em  triangulation} of a $d$-polytope $\calP$ is a collection $\calT$ of  $d$-simplices each of whose vertices are in $\calP \cap \bbZ^d$, such that the union of the simplices in $\calT$ is $\calP$, and any pair of simplices intersect in a (possibly empty) common face. 
The normalized volume of a $d$-polytope is $d!$ times its Euclidean volume.  Since the Euclidean volume of a unimodular $d$-simplex in $\bbR^d$ is $\frac{1}{d!}$, then by enumerating the simplices in a unimodular triangulation, one can compute the normalized volume of the polytope.

Baldoni and Vergne~\cite{BV08} gave a set of formulas to determine the normalized volume of $\calF_G(\ba)$, and these are known as the Lidskii formulas. 
M\'esz\'aros and Morales~\cite{MM19} described a triangulation approach due to Postnikov and Stanley, providing an alternative proof of the Lidskii formulas. 
Together with Striker~\cite{MMS19}, showed that a family of Postnikov--Stanley triangulations, which they call framed, are equivalent to the triangulations introduced by Danilov, Karzanov and Koshevoy in \cite{DKK12}, which depend on the notion of a {\em framing} on a graph (see Section \ref{sec.framed_triangulations}).
Different framings of a graph give different regular unimodular triangulations of the associated flow polytope. 

The combinatorial structure of a triangulation  $\calT$ of a polytope is encoded in its {\em dual graph}. This is a graph on the set of simplices in $\calT$ with edges between simplices sharing a common facet. 
In this article we introduce the family of $\nu$-caracol graphs $\car(\nu)$ (see Definition \ref{def.nuCar}).
These graphs are indexed by lattice paths $\nu$ in $\bbZ^2$, and are similar to constructions in~\cite{MM19} and~\cite{Y}.
In Sections \ref{sec.lengthframed} and \ref{sec.planarframed} we discuss two particular framings on $\car(\nu)$ which we call the {\em length} and the {\em planar} framings. 
The triangulations arising from these framings have connections to two lattices on $\nu$-Catalan objects (see Section \ref{sec.nucatalan_numbers}) that appear recurrently in the literature:
\begin{enumerate}
    \item The {\em $\nu$-Tamari lattice} $\mathrm{Tam}(\nu)$ introduced by Pr\'eville-Ratelle and Viennot~\cite{PV17}.
    \item The principal order ideals $I(\nu)$ determined by $\nu$ in Young's lattice $Y$.  
    \end{enumerate}
Thus we find that the collection of framed triangulations on $\calF_{\car(\nu)}$ provides a unifying framework for studying these two ubiquitous lattice structures.
The family of $\nu$-Catalan objects is a generalization of the classical Catalan and rational Catalan families of objects that have been extensively studied in the recent literature (see for example~\cite{ALW16,ARW13,CG19,PV17}).

Our main results are the following:

\begin{restatable}[]{theorem}{volumethm}
\label{thm.volume}
The normalized volume of the flow polytope $\calF_{\car(\nu)}$ is given by the number of $\nu$-Dyck paths, that is, the $\nu$-Catalan number $\Cat(\nu)$.
\end{restatable}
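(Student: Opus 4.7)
The plan is to exhibit a unimodular triangulation of $\calF_{\car(\nu)}$ whose maximal simplices are in bijection with $\nu$-Dyck paths, so that the normalized volume of $\calF_{\car(\nu)}$, which equals the number of maximal simplices in any unimodular triangulation, comes out to $\Cat(\nu)$. Since framed triangulations of flow polytopes are always regular and unimodular \cite{DKK12, MMS19}, it suffices to fix one framing on $\car(\nu)$ and enumerate the resulting top-dimensional simplices.

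I would work with one of the two framings introduced in Sections \ref{sec.lengthframed} and \ref{sec.planarframed} — say the planar framing, whose dual graph is to be identified with the principal order ideal $I(\nu)$ in Young's lattice, a set whose cardinality is known to be $\Cat(\nu)$. The first step is to describe the routes of $\car(\nu)$ (the vertices of $\calF_{\car(\nu)}$) combinatorially in terms of the path $\nu$, using Definition \ref{def.nuCar}. Then, for the planar framing, I would characterize the maximal cliques of pairwise coherent routes, since by the Danilov–Karzanov–Koshevoy construction these cliques are exactly the vertex sets of the maximal simplices of the framed triangulation. The crucial step is to produce an explicit bijection between such cliques and $\nu$-Dyck paths, after which counting gives the desired $\Cat(\nu)$. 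The same total would be obtained via the length framing and its identification with the $\nu$-Tamari lattice $\mathrm{Tam}(\nu)$, giving an independent verification.

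The main obstacle will be organizing the routes of $\car(\nu)$ and the coherence relation of the chosen framing so that maximal cliques of coherent routes translate transparently into lattice paths weakly above $\nu$. In particular, one has to argue that every $\nu$-Dyck path arises from exactly one such clique and vice versa, which requires a careful bookkeeping of how consecutive routes differ. A secondary technical check is that $\car(\nu)$ falls within the class of graphs to which the framed-triangulation machinery applies, but this should follow directly from the construction of $\car(\nu)$ as a connected acyclic graph with a unique source and sink and nonempty in/out sets at interior vertices.
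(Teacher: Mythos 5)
Your proposal is correct in outline, but it takes a genuinely different route from the paper's own proof of Theorem~\ref{thm.volume}. The paper proves the volume formula directly in Section~\ref{sec.nucaracolgraph}, before any triangulation is constructed: it invokes the Lidskii-type formula of Baldoni and Vergne (Proposition~\ref{thm.Lidskii}) to write $\vol\calF_{\car(\nu)} = K_{\car(\nu)}(\bv_{\mathrm{in}})$, interprets the vector partitions counted by this Kostant partition function as ``in-degree gravity diagrams'' (Lemma~\ref{lem.ingrav_bijection1}), and then bijects those diagrams with $\nu$-Dyck paths by a $90$-degree rotation and embedding argument (Lemma~\ref{lem.ingrav_bijection2}). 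Your plan instead counts the maximal simplices of a framed DKK triangulation, which amounts to deriving the volume as a corollary of the much harder Theorem~\ref{thm.roottriangulation} (or Theorem~\ref{thm.associahedralTriangulation}): the ``crucial step'' you flag --- bijecting maximal cliques of coherent routes with $\nu$-Dyck paths --- is precisely the content of Lemmas~\ref{lem.bijectionlatticepoints} and~\ref{lem.planarcoherence} and the proof of Theorem~\ref{thm.roottriangulation}, so your approach is sound but front-loads essentially all the work of Section~\ref{sec.planarframed}. The paper explicitly acknowledges that the two triangulations give ``two more proofs'' of the volume result, so your strategy is one the authors had in mind. What the paper's route buys is independence and economy: the gravity-diagram argument needs no framing, no coherence relation, and no clique analysis, and it gives a direct combinatorial meaning to the single surviving term of the Lidskii formula. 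What your route buys is that, once the dual-graph theorems are in hand, the volume statement is immediate, and you get it twice over (once per framing) as a consistency check. One small point of care: you should justify that the number of maximal simplices in a unimodular triangulation equals the normalized volume with respect to the lattice in the affine hull of $\calF_{\car(\nu)}$ (the polytope sits in $\bbR^m$ but has dimension $m-n=a+b$); this is standard but worth a sentence.
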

\begin{restatable}[]{theorem}{associahedralthm} The length-framed triangulation of $\mathcal{F}_{\car(\nu)}$ is a regular unimodular triangulation whose dual graph is the Hasse diagram of the $\nu$-Tamari lattice $\mathrm{Tam}(\nu)$. 
\label{thm.associahedralTriangulation}
\end{restatable}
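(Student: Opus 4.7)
The plan is to leverage the general theory of framed triangulations that the paper develops in Section~\ref{sec.framed_triangulations}. Within that framework, any framing on $\car(\nu)$ automatically produces a regular unimodular triangulation of $\calF_{\car(\nu)}$, so the burden of proof falls on (a) identifying the maximal simplices with $\nu$-Dyck paths and (b) identifying adjacency in the dual graph with covering relations of $\mathrm{Tam}(\nu)$.

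First, I would show that the maximal simplices of the length-framed triangulation are in natural bijection with $\nu$-Dyck paths. By Theorem~\ref{thm.volume} together with unimodularity, the number of such simplices is $\Cat(\nu)$, so the task is to produce an explicit, combinatorially meaningful bijection. The natural construction is to read off, from each clique of pairwise length-compatible routes in $\car(\nu)$, a sequence of local data at each internal vertex (such as heights, or the choice of incoming/outgoing edges traversed by the routes of the clique), and to show that this data encodes precisely a lattice path weakly above $\nu$. This part should follow the same pattern as M\'esz\'aros--Morales in the staircase case and be verified by induction on the number of north steps in $\nu$.

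Second, I would analyze the local flips in the triangulation. By standard polytopal combinatorics, two maximal simplices share a facet iff they differ in exactly one route. The length framing provides a combinatorial description of this route exchange: removing a route from a clique determines a unique replacement that restores length-compatibility with the remaining routes. I would then show, under the bijection from step one, that these route exchanges translate exactly into the rotations that generate the $\nu$-Tamari covering relations of Pr\'eville-Ratelle and Viennot.

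The main obstacle lies in the third step: matching not just the underlying adjacency graph but also the \emph{orientation} of its edges with the covering relations of $\mathrm{Tam}(\nu)$. The $\nu$-Tamari rotation is intrinsically asymmetric, so one must verify that the length framing canonically distinguishes the ``lower'' and ``upper'' route in each exchanged pair in a way that agrees with the Pr\'eville-Ratelle--Viennot direction. A useful intermediate step would be to connect the length-framed triangulation directly with the $\nu$-Tamari complex of Ceballos--Padrol--Sarmiento, since their complex is already known to carry the right combinatorial structure; comparing link structures at each face should reduce the claim to a bijection between facets of the two simplicial complexes that is equivariant with respect to the flip operations, and the orientation statement then follows from how the length ordering at each vertex prefers the shorter incoming edge, mirroring the ``pushing up'' operation defining $\mathrm{Tam}(\nu)$.
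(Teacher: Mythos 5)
Your overall architecture is right: the DKK/framed-triangulation machinery of Section~\ref{sec.framed_triangulations} gives regularity and unimodularity for free, and the real work is identifying maximal cliques with $\nu$-Catalan objects and single-route exchanges with $\nu$-Tamari covers. But you have chosen the wrong combinatorial model for the length framing, and that choice creates a genuine gap. You propose to biject length-framed cliques with $\nu$-Dyck paths by reading off ``local data at each internal vertex,'' with equinumerosity supplied by Theorem~\ref{thm.volume}. Counting only gives equinumerosity, not a flip-equivariant bijection, and the local-data construction is exactly the one that works for the \emph{planar} framing (where routes biject with lattice points above $\nu$ and cliques with $\nu$-Dyck paths), not the length framing. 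Worse, your second step requires showing that exchanging a single route corresponds to a Pr\'eville-Ratelle--Viennot rotation on $\nu$-Dyck paths; that rotation is a non-local operation (it transports an entire subpath $\mu_{[p,q]}$ determined by the function $\horiz_\nu$), so matching it against a one-element clique exchange is precisely the hard part, and nothing in your outline supplies it.

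The paper's proof avoids all of this by working with $(I,\overline{J})$-trees instead of $\nu$-Dyck paths: Lemma~\ref{lem:RouteArcBijection} gives an explicit bijection $\varphi$ between individual routes $R_{j,i,\ell}$ and individual arcs $(E_{j,i},N_\ell)$, and Lemma~\ref{lem:coherent} shows that two routes are coherent under the length framing if and only if the corresponding arcs are non-crossing. Maximal cliques then correspond to $(I,\overline{J})$-trees, and adjacency (differing by one route) corresponds to differing by one arc, which by Corollary~\ref{cor:nuTamariHasseDiag} is exactly the Hasse diagram of $\mathrm{Tam}(\nu)$. Your third step's worry about orientations is also a red herring for the statement as written: the dual graph of a triangulation is an undirected graph, and the theorem only asserts it coincides with the (underlying graph of the) Hasse diagram, so no canonical up/down direction needs to be extracted from the framing. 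The intermediate step you mention at the end --- connecting directly to the Ceballos--Padrol--Sarmiento complex --- is in fact the main road, not a detour; if you promote it to your first step and prove the route-to-arc and coherence-to-noncrossing correspondences, the rest follows in a few lines.
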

\begin{restatable}[]{theorem}{rootthm}\label{thm.roottriangulation}
 The planar-framed triangulation of $\mathcal{F}_{\car(\nu)}$ is a regular unimodular triangulation whose dual graph is the Hasse diagram of the principal order ideal $I(\nu)$ in Young's lattice $Y$.
\end{restatable}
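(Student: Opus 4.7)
The plan is to reduce most of the theorem to the general framework of DKK framed triangulations and then identify the dual graph combinatorially. Unimodularity and regularity of a framed triangulation of a flow polytope are guaranteed by the results cited in the introduction (DKK, MMS), so once the planar framing is verified to be a valid framing on $\car(\nu)$---which should follow directly from a fixed planar embedding of the graph---those two properties come for free. The substantive work is the identification of the dual graph with the Hasse diagram of $I(\nu)$.

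First I would give an explicit description of the routes of $\car(\nu)$ and show that the planar framing orders the edges at each internal vertex in a way compatible with the planar geometry, so that two routes are \emph{coherent} in the DKK sense if and only if they do not \emph{cross}. Maximal simplices of the triangulation then correspond to maximal non-crossing (i.e. nested) collections of routes. I would encode such a maximal collection by recording, section by section along $\car(\nu)$, how many of its routes pass through each parallel strip; this should produce a lattice path weakly below $\nu$, equivalently a partition $\lambda \in I(\nu)$. The fact that this encoding is a bijection reduces to checking that the non-crossing condition forces precisely the staircase/partition shape, with the count matching $|I(\nu)| = \Cat(\nu)$ predicted by Theorem \ref{thm.volume}.

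The second step is to identify facet-adjacency with cover relations. In a framed triangulation two maximal simplices share a facet precisely when the corresponding cliques differ by exchanging a single route. I would verify directly that a single-route exchange in a nested family of routes corresponds to adding or removing exactly one cell from $\lambda$, so the flip graph is the undirected Hasse diagram of $I(\nu)$ inside Young's lattice. The regularity of the triangulation induces a canonical orientation of the dual graph via the gradient of the height function witnessing regularity, and this orientation is the one that upgrades the flip graph to the Hasse diagram.

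The main obstacle I anticipate is the combinatorial claim in the first step: that DKK-coherence under the planar framing coincides exactly with the non-crossing condition on routes of $\car(\nu)$, and that maximal non-crossing families are in bijection with $I(\nu)$. This will require a careful case analysis of how routes interact at the distinguished interior vertex of the caracol and how the planar framing orders the incoming and outgoing edges there. A secondary technical point is confirming that a single-route exchange translates cleanly to a single box addition or removal in $\lambda$, rather than a more complex simultaneous change in the partition.
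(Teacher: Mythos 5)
Your plan follows essentially the same route as the paper: regularity and unimodularity are delegated to the DKK/MMS machinery, coherence under the planar framing is identified with non-crossing of routes (the paper cites \cite[Lemma 6.5]{MMS19} for this), maximal cliques are encoded as elements of $I(\nu)$ --- though the paper does this by mapping each \emph{individual} route to a lattice point weakly above $\nu$ (counting bounded faces of the embedded $\car(\nu)$ below the route), so that a clique becomes the point set of a $\nu$-Dyck path, rather than via your aggregate strip counts --- and a single-route exchange is matched with an $NE\leftrightarrow EN$ transposition, i.e.\ adding or removing one box of $\lambda$. Two small points to fix in the write-up: the paths you produce should lie weakly \emph{above} $\nu$ (partitions contained in $\lambda(\nu)$, not paths below $\nu$), and the theorem only requires the undirected Hasse diagram, so the final orientation-via-height-function step is unnecessary.
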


\begin{restatable}[]{theorem}{hstarthm}\label{thm.hstar}
The $h^*$-polynomial of $\calF_{\car(\nu)}$ is the $\nu$-Narayana polynomial.
\end{restatable}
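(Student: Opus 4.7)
The plan is to compute $h^*_{\calF_{\car(\nu)}}(z)$ via one of the regular unimodular triangulations already constructed in the paper. By a standard result in Ehrhart theory, if a $d$-dimensional lattice polytope $P$ admits a regular unimodular triangulation $\calT$, then $h^*_P(z)$ coincides with the $h$-polynomial of $\calT$ viewed as a pure $d$-dimensional simplicial complex. Consequently, either of the triangulations furnished by Theorems \ref{thm.associahedralTriangulation} and \ref{thm.roottriangulation} reduces the problem to an $h$-vector computation on a concrete simplicial complex. I would work with the length-framed triangulation, since its facet-adjacency graph is the Hasse diagram of $\mathrm{Tam}(\nu)$ and its underlying complex should match, up to a natural relabeling of vertices, the $\nu$-Tamari complex of Ceballos--Padrol--Sarmiento.

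The first step is therefore to identify the length-framed triangulation of $\calF_{\car(\nu)}$ with the $\nu$-Tamari complex as an abstract simplicial complex. This should follow from the explicit description of the framing used in Theorem \ref{thm.associahedralTriangulation}: match the vertices of the triangulation (that is, routes of $\car(\nu)$) with the vertex set of the $\nu$-Tamari complex, check that cliques in the length-framing correspond to $\nu$-trees, and verify that two maximal simplices share a facet exactly when the corresponding $\nu$-trees differ by the flip defining $\mathrm{Tam}(\nu)$. Once the identification is established, the calculation by Ceballos--Padrol--Sarmiento of the $h$-vector of the $\nu$-Tamari complex as the vector of $\nu$-Narayana numbers completes the proof.

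If one prefers a self-contained argument, my fallback plan is to exhibit a shelling of the length-framed triangulation indexed by a linear extension of $\mathrm{Tam}(\nu)$ (read, say, from top to bottom), compute the restriction set of each facet under the shelling in terms of the corresponding $\nu$-Dyck path, and identify the resulting statistic as the number of valleys (or an equivalent $\nu$-Narayana statistic). The main obstacle in either route is precisely this matching of statistics: translating the shelling restriction, or equivalently the $h$-vector index on $\nu$-trees, into a familiar peak/valley count on $\nu$-Dyck paths. This amounts to producing a statistic-preserving bijection between $\nu$-trees and $\nu$-Dyck paths, which is combinatorially delicate but should be guided by the bijections already used in the proofs of Theorems \ref{thm.volume} and \ref{thm.associahedralTriangulation}. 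As a consistency check, specializing $\nu$ to a staircase path should recover M\'esz\'aros's calculation of the classical Narayana $h^*$-polynomial for the staircase caracol flow polytope.
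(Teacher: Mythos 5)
Your proposal is correct, but it takes the route the paper explicitly declines in favor of a different one. The paper computes the $h$-vector of the \emph{planar-framed} triangulation: it first shows that any linear extension of $I(\nu)$ is a shelling order (using that the join $\pi_i\vee\pi_j$ exists in $I(\nu)$ and can be connected to $\pi_j$ by single-box steps), and then observes that the restriction set of a facet indexed by a $\nu$-Dyck path $\pi$ has size equal to the number of paths covered by $\pi$ in $I(\nu)$, which is exactly the number of valleys of $\pi$; no external $h$-vector computation is needed. Your primary route --- identify the length-framed triangulation with the $(I,\overline{J})$-Tamari complex (this is the paper's Corollary \ref{cor.geometric_realization}, a consequence of Lemma \ref{lem:coherent}, since both complexes are flag on the same compatibility relation) and then invoke the Ceballos--Padrol--Sarmiento computation of its $h$-vector --- is logically complete and is acknowledged by the paper at the end of Section \ref{sec.hstar} as an alternative proof. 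The trade-off is that your argument consumes \cite[Lemma 4.5]{CPS19} as an input, whereas the paper's planar-framed argument is independent of it and therefore yields, as a byproduct, a new proof that the $h$-vector of the $(I,\overline{J})$-Tamari complex is given by the $\nu$-Narayana numbers. Your fallback (shelling the length-framed triangulation by a linear extension of $\mathrm{Tam}(\nu)$ and matching the restriction statistic to a valley count) would essentially reconstruct the CPS shelling; the statistic-matching you flag as delicate is genuinely the crux there, and it is precisely the step that becomes trivial on the planar-framed side, which is why the paper works with $I(\nu)$ instead.
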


To describe the combinatorial structure of the length-framed and planar-framed triangulations we use three different $\nu$-Catalan families of objects, as each highlights the combinatorics in crucial and distinct ways. 
These are the $(I,\overline{J})$-trees introduced by Ceballos, Padrol and Sarmiento~\cite{CPS19}, the $\nu$-Dyck paths introduced by Pr\'eville-Ratelle and Viennot~\cite{PV17}, and the $\nu$-trees which were also introduced by Ceballos et al.~\cite{CPS20} (see Sections \ref{subsec:nu-Tamari-IJ}, \ref{subsec:nu-Tamari-nu-Dyck}, and \ref{subsec:nu-Tamari-nu-tree} respectively).
The role that they play in the combinatorics of the triangulations is summarized in Table~\ref{tab.objects} below.
The interested reader can visit \cite{CPS19} and \cite{CPS20} for the correspondences between $(I,\overline{J})$-trees, $\nu$-trees and $\nu$-Dyck paths and \cite{BY} for their generalizations to $(I,\overline{J})$-forests, $\nu$-Schr\"oder trees and $\nu$-Schr\"oder paths.

\def\arraystretch{1.2}
\begin{table}[h!]
\centering
\begin{tabular}{|m{0.17\linewidth}|m{0.155\linewidth}|m{0.12\linewidth}|m{0.285\linewidth}|m{0.13\linewidth}|}
\hline
\centering \textbf{Triangulation} 
    & \centering \textbf{Vertices}               
    & \centering \textbf{Simplices}    
    & \centering \textbf{Adjacency}                                                 
    & \textbf{Dual graph} \\ \hline
 \multirow{3}{*}{Length-framed}         
    & Arcs of $(I,\overline{J})$-trees 
    & $(I,\overline{J})$-trees 
    & Two $(I,\overline{J})$-trees that \textcolor{white}{mm} differ by one arc          
    & \multirow{3}{=}{Hasse diag. of $\mathrm{Tam}(\nu)$} \\ \cline{2-4}
    
    & Lattice points above $\nu$ 
    & $\nu$-trees 
    & Two $\nu$-trees that differ by a rotation
    &\\\cline{2-4}
         
    & (not obtained directly)
    & $\nu$-Dyck paths 
    & Two $\nu$-Dyck paths that differ by a rotation
    &\\ \hline
Planar-framed          
    & Lattice points above $\nu$       
    & $\nu$-Dyck paths      
    & Two $\nu$-Dyck paths that differ by a pair EN to NE
    & Hasse diag. of $I(\nu)$\\ \hline
\end{tabular}
\caption{$\nu$-Catalan objects and their role in the combinatorial structure of the two framed triangulations.}
\label{tab.objects}
\end{table}

We point out that the study of length-framed and planar-framed triangulations of flow polytopes on the $\nu$-caracol graphs can be extended systematically to all graphs. 
On the $\nu$-caracol graphs, the length framing can be viewed as ordering both the incoming and outgoing sets of edges at each inner vertex according to decreasing length, while the planar framing can be viewed as ordering the set of incoming edges at each inner vertex with respect to decreasing length while the set of outgoing edges is ordered with respect to increasing length.  
Particularly for graphs which are symmetric with respect to the vertical axis, then our viewpoint suggests that these two framings are in a sense dual to each other, so perhaps it is not surprising that both framings lead to combinatorially interesting triangulations of the flow polytope. 
In a forthcoming article we study these triangulations in the more general setting.

In the classical case when $\nu=(1,1,\ldots,1)=:(1^n)$, the simultaneous appearance of the Tamari lattice and $I(1^n)$ (also known as the lattice of filters of the type $A$ root poset) in the study of polytopes has been observed before in the work of Pitman and Stanley~\cite{PS02} on subdivisions of a family of polytopes $\Pi_n(\nu)$ (see Remark~\ref{rem.PS}) for the case when $\nu=(1^n)$. 
In particular, the canonical triangulation of the order polytope $\calO(Q_n)$, where $Q_n$ is the product of a $2$-chain and an $n$-chain, was used to show that $\Pi_n(1^n)$ has a subdivision whose dual structure is given by the lattice $I(1^n)$. 
A second subdivision of $\Pi_n(1^n)$ was constructed in \cite{PS02} whose dual structure is the Tamari lattice. 
Via the Cayley trick, M\'esz\'aros and Morales~\cite[Section 7]{MM19} observed that there is an embedding of $\Pi_n(\nu)$ in a polytope that is integrally equivalent to $\calF_{\car(\nu)}$. 
Following their conclusions, our results imply that the two triangulations of $\calF_{\car(\nu)}$ in Theorems \ref{thm.associahedralTriangulation} and \ref{thm.roottriangulation} induce two subdivisions of $\Pi_n(\nu)$ whose dual graphs are the Hasse diagrams of the $\nu$-Tamari lattice and $I(\nu)$ respectively, thereby generalizing the result obtained in \cite{PS02} for the classical case $\nu=(1^n)$. 
Our results also unify the techniques used to exhibit these triangulations since both are obtained directly as framed-triangulations of $\calF_{\car(\nu)}$. 

The results of M\'esz\'aros, Morales and Striker in \cite{MMS19} imply that the polytope $\calF_{\car(\nu)}$ is integrally equivalent to an order polytope $\calO(Q_{\nu})$ and that the planar-framed triangulation of $\calF_{\car(\nu)}$ corresponds to the canonical triangulation of $\calO(Q_{\nu})$ under this equivalence.  This also generalizes the relation between $\Pi_n(\nu)$ and $\calO(Q_{\nu})$ (see Figure \ref{fig.orderpolytope}) in the classical case when $\nu=(1^n)$.

This article is organized as follows. In Section \ref{sec.nucaracolgraph} we introduce the $\nu$-caracol graph $\car(\nu)$ and its associated flow polytope $\calF_{\car(\nu)}$, proving in Theorem \ref{thm.volume} that its normalized volume is given by the $\nu$-Catalan number $\Cat(\nu)$. 
In Section \ref{sec.framed_triangulations} we describe the theory of framed triangulations as presented in~\cite{MMS19}. 
In Section~\ref{sec.lengthframed} we define the length framing of $\car(\nu)$, prove Theorem \ref{thm.associahedralTriangulation}, and as consequence we conclude that the associated triangulation is a geometric realization of the $\nu$-Tamari complex.
In Section~\ref{sec.planarframed} we define the planar framing of $\car(\nu)$ and prove Theorem \ref{thm.roottriangulation}. 
We explain the relationship to order polytopes, showing that our unifying framework generalizes results of Pitman and Stanley in \cite{PS02}.
As an application, in Section \ref{sec.hstar} we use the dual graph of the planar-framed triangulation of $\calF_{\car(\nu)}$ to obtain the $h^*$-polynomial, which proves Theorem \ref{thm.hstar}. 
This result also gives a new proof that the $h$-vector of the $\nu$-Tamari complex is given by the $\nu$-Narayana numbers.

%-----------------------------------------------------------------------------
\section{The family of \texorpdfstring{$\nu$-}-caracol flow polytopes}
\label{sec.nucaracolgraph}

In~\cite{BGHHKMY19}, the second and fourth authors studied the flow polytope of the caracol graph, whose normalized volume is the number of Dyck paths from $(0,0)$ to $(n,n)$, a Catalan number.  We now extend this construction. 
\subsection{\texorpdfstring{$\nu$-}-Dyck paths and \texorpdfstring{$\nu$-}-Catalan numbers} \label{sec.nucatalan_numbers}

Let $a,b$ be nonnegative integers, and let $\nu$ be a lattice path from $(0,0)$ to $(b,a)$, consisting of a sequence of $a$ \emph{north} steps $N=(0,1)$ and $b$ \emph{east} steps $E=(1,0)$.
A {\em $\nu$-Dyck path} is a lattice path from $(0,0)$ to $(b,a)$ that lies weakly above $\nu$.

When $a$ and $b$ are coprime positive integers and $\nu$ is the lattice path that borders the
squares which intersect the line $y=\frac{a}{b}x$, this is the special case of the {\em rational $(a,b)$-Dyck path} studied by 
Armstrong, Loehr and Warrington in \cite{ALW16} who showed that the number of rational $(a,b)$-Dyck paths is the {\em $(a,b)$-Catalan number} $\Cat(a,b) = \frac{1}{a+b}\binom{a+b}{a}$. 
See Figure~\ref{fig:threeTamariObjects} for an example of a $(3,5)$-rational Dyck path.
When $(a,b)=(n,n+1)$, this is the case of the classical Catalan number $\Cat(n) = \frac{1}{2n+1}\binom{2n+1}{n} = \frac{1}{n+1}\binom{2n}{n}$.
For general $\nu$, the number $\Cat(\nu)$ of $\nu$-Dyck paths is calculated by a determinantal formula which can be derived by an application of the Gessel--Viennot Lemma \cite{GesselViennot1985}: 
$$\Cat(\nu)=\det \left(\binom{1+\sum_{k=1}^{a-j} \nu_k }{1+j-i} \right)_{1\leq i,j\leq a-1}, 
$$
but no closed-form positive formula is known. 
For more on $\nu$-Dyck paths, see  for example Ceballos and Gonz\'{a}lez D'Le\'{o}n~\cite{CG19}, or Pr\'eville-Ratelle and Viennot~\cite{PV17}.

\subsection{The \texorpdfstring{$\nu$-}-caracol graph}

\begin{definition}\label{def.nuCar}
Let $a,b$ be nonnegative integers, and let $\nu$ be a lattice path from $(0,0)$ to $(b,a)$ where $\nu = NE^{\nu_1}NE^{\nu_2} \cdots NE^{\nu_a}$.
The {\em $\nu$-caracol graph} $\car(\nu)$ is the graph on the vertex set $[a+3]$, together with $\nu_i$ copies of the edge $(1, i+2)$ for $i=1,\ldots, a$, the edges $(i,a+3)$ for $i=2,\ldots, a+1$, and the edges $(i,i+1)$ for $i=1,\ldots,a+2$.
\end{definition}

Note that in this construction, the graph $\car(\nu)$ has $n+1:=a+3$ vertices, and the in-degree $\inedge_i$ of the vertex $i$ in $\car(\nu)$ is $\inedge_2=1$, $\inedge_{i} = \nu_{i-2}+1$ for $i=3,\ldots, n$ and $\inedge_{n+1}=n-1$.
The number of edges $m$ of $\car(\nu)$ is computed by summing the in-degrees of its vertices, so that
$$m= \sum_{i=2}^{n+1} \inedge_i
	= 1 + \sum_{i=1}^{a} (\nu_i+1) + (a+1)
	= 2a+b+2.$$
The (intrinsic) dimension of a flow polytope is given by $\dim\calF_G = |E(G)|-|V(G)|+1$, so we can conclude from this that $\dim\calF_{\car(\nu)} = m-n = a+b$. 

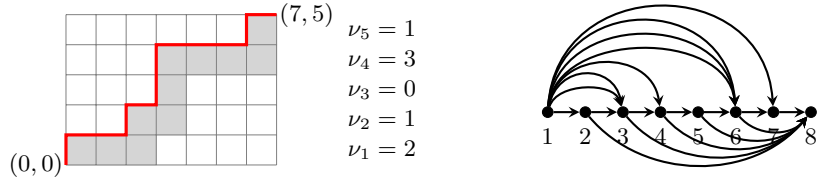
\begin{figure}[ht!]
\centering
%picture of graph with nu volume
\begin{tikzpicture}
\begin{scope}[scale=0.4, xshift=-420, yshift=-50]
	%the board
	\draw[fill, color=gray!33] (0,0) rectangle (3,1);
	\draw[fill, color=gray!33] (2,1) rectangle (4,2);
	\draw[fill, color=gray!33] (3,2) rectangle (4,3);
	\draw[fill, color=gray!33] (3,3) rectangle (7,4);
	\draw[fill, color=gray!33] (6,4) rectangle (7,5);
	\draw[very thin, color=gray!100] (0,0) grid (7,5);
	%\draw[dashed, color=gray!15] (0,0)--(6,5);
    %\tikzstyle{every node}=[scale=0.7]	
    \node at (-1,0) {\scriptsize$(0,0)$};
	\node at (8,5) {\scriptsize$(7,5)$};
	
	%signature
	\node at (10.5,4.5) {\scriptsize$\textcolor{black}{\nu_5=1}$};
	\node at (10.5,3.5) {\scriptsize$\textcolor{black}{\nu_4=3}$};
	\node at (10.5,2.5) {\scriptsize$\textcolor{black}{\nu_3=0}$};
	\node at (10.5,1.5) {\scriptsize$\textcolor{black}{\nu_2=1}$};
	\node at (10.5,0.5) {\scriptsize$\textcolor{black}{\nu_1=2}$};
					
	%Dyck path
	\draw[very thick, color=red] (0,0)--(0,1)--(2,1)--(2,2)--(3,2)--(3,4)--(6,4)--(6,5)--(6,5)--(7,5);
\end{scope}
\begin{scope}[scale=.5]
	\vertex[fill,label=below:\scriptsize{$1$}](a1) at (1,0) {};
	\vertex[fill,label=below:\scriptsize{$2$}](a2) at (2,0) {};
	\vertex[fill,label=below:\scriptsize{$3$}](a3) at (3,0) {};
	\vertex[fill,label=below:\scriptsize{$4$}](a4) at (4,0) {};
	\vertex[fill,label=below:\scriptsize{$5$}](a5) at (5,0) {};
	\vertex[fill,label=below:\scriptsize{$6$}](a6) at (6,0) {};
	\vertex[fill,label=below:\scriptsize{$7$}](a7) at (7,0) {};
	\vertex[fill,label=below:\scriptsize{$8$}](a8) at (8,0) {};
	
	%straight line	
	\draw[-stealth, thick] (a1)--(a2);
	\draw[-stealth, thick] (a2)--(a3);
	\draw[-stealth, thick] (a3)--(a4);
	\draw[-stealth, thick] (a4)--(a5);
	\draw[-stealth, thick] (a5)--(a6);
	\draw[-stealth, thick] (a6)--(a7);
	\draw[-stealth, thick] (a7)--(a8);
	%top arcs			
	\draw[-stealth, thick] (a1) .. controls (1.25, 1.3) and (2.75, 1.3) .. (a3);	
	\draw[-stealth, thick] (a1) .. controls (1.25, .8) and (2.75, .8) .. (a3);
	\draw[-stealth, thick] (a1) .. controls (1.25, 1.7) and (3.75, 1.7) .. (a4);
	\draw[-stealth, thick] (a1) .. controls (1.25, 2.2) and (5.75, 2.2) .. (a6);	
	\draw[-stealth, thick] (a1) .. controls (1.25, 2.7) and (5.75, 2.7) .. (a6);
	\draw[-stealth, thick] (a1) .. controls (1.25, 3.2) and (5.75, 3.2) .. (a6);
	\draw[-stealth, thick] (a1) .. controls (1.25, 3.7) and (6.75, 3.7) .. (a7);

	%bottom arcs
	\draw[-stealth, thick] (a6) to[out=-50,in=230] (a8);
	\draw[-stealth, thick] (a5) to[out=-50,in=230] (a8);
	\draw[-stealth, thick] (a4) to[out=-50,in=230] (a8);
	\draw[-stealth, thick] (a3) to[out=-50,in=230] (a8);
	\draw[-stealth, thick] (a2) to[out=-50,in=230] (a8);
\end{scope}
\end{tikzpicture}
\caption{A lattice path $\nu = NE^2NE^1NE^0NE^3NE^1$  and its associated $\nu$-caracol graph $\car(\nu)$.
}
\label{fig.numcargraph}
\end{figure}

The flow polytope on the graph $\car(\nu)$ in the special case when $\nu=(1^n)$ has previously been studied by M\'esz\'aros~\cite{M16} and by Benedetti et al.~\cite{BGHHKMY19}.

\begin{remark}\label{rem:extraSteps}
The careful reader will notice that in Definition \ref{def.nuCar} of the graph $\car(\nu)$ we chose to use a lattice path $\nu$ that begins with an $N$ step.  
This choice was made for convenience of the presentation, and is not a true restriction. 
From the results in Section \ref{sec.framed_triangulations} one can verify that the combinatorial structure of a framed triangulation of the flow polytope $\calF_{\car(\nu)}$ is not affected by changing the number of $N$ steps at the beginning of $\nu$ (or by changing the number of $E$ steps at the end of $\nu$). 
Hence without loss of generality and unless otherwise specified, all lattice paths $\nu$ begin with at least one $N$ step.
\end{remark}

Two integral polytopes $\calP\subseteq \bbR^m$ and $\calQ\subseteq\bbR^n$ are {\em integrally equivalent} if there exists an affine transformation $\varphi:\bbR^m\rightarrow\bbR^n$ whose restriction to $\calP$ preserves the lattice.  That is, $\varphi$ is a bijection between $\bbZ^m \cap \mathrm{aff}(P)$ and $\bbZ^n \cap \mathrm{aff}(Q)$.
Integrally equivalent polytopes have the same Ehrhart polynomial, and hence the same volume.

\begin{remark}\label{rem.PS}
M\'esz\'aros and Morales~\cite[Corollary 6.17]{MM19} have previously considered a closely-related variant of the flow polytope $\calF_{\car(\nu)}$, denoted as $\Pi^\star_{a}(\nu)$ in their work. 
The underlying graph of the flow polytope $\Pi^\star_{a}(\nu)$ can be obtained from $\car(\nu)$ by deleting the edge $(2,n+1)$ and contracting the edge $(1,2)$, and a simple transformation reveals that the flow polytopes $\calF_{\car(\nu)}$ and $\Pi^\star_{a}(\nu)$ are integrally equivalent.

They observed that the normalized volume of $\Pi^*_{a}(\nu)$ is the number of lattice points in the Pitman--Stanley polytope $\Pi_{a}(\nu) = \{ \by\in \bbR^{a} \mid \sum_{i=1}^k y_i \leq \sum_{i=1}^k \nu_i\}$, which is equal to the number of $\nu$-Dyck paths.
In the next section, we obtain a direct proof of this result by giving a combinatorial interpretation to the vector partitions enumerated by the Kostant partition function in the generalized Lidskii formula.
This method was first considered in~\cite{BGHHKMY19} and further developed in~\cite{Y}.
\end{remark}

\subsection{The volume of the \texorpdfstring{$\nu$-}-caracol flow polytope}

We begin by defining the Kostant partition function of a graph, and the special case of the Lidskii volume formula which we will use.

For $i=1,\ldots,n$, we call $\alpha_i = \be_i-\be_{i+1}$ the $i$-th {\em simple root}.
For each edge $e=(i,j)$ of a graph $G$, let  $\alpha_e=\be_i-\be_j= \alpha_i+\cdots +\alpha_{j-1} $, and $\Phi_G^+ =\{\alpha_e\mid e\in E(G)\}$ will be called the multiset of {\em positive roots associated to $G$}.

A {\em vector partition} of the vector $\bv$ with respect to $\Phi_G^+$ is a decomposition of $\bv$ into a non-negative linear combination of the positive roots associated to $G$.
The {\em Kostant partition function} of $G$ evaluated at $\bv$, denoted by $K_G(\bv)$, is the number of vector partitions of $\bv$ with respect to $\Phi_G^+$.
Integral $\bv$-flows on $G$ are equivalent to vector partitions of $\bv$, so the number of integral $\bv$-flows on $G$, and hence the number of lattice points in $\calF_G(\bv)$, is $K_G(\bv)$.

Let $G$ be a graph on the vertex set $\{1,\ldots, n+1\}$.  For $i=2,\ldots, n+1$, let $u_i=\inedge_i-1$ be one less than the in-degree of the vertex $i$.
\begin{proposition}[{Baldoni and Vergne~\cite[Theorem 38]{BV08}}] \label{thm.Lidskii}
Let $G$ be a graph with $n+1$ vertices and $m$ edges.  The normalized volume of the flow polytope $\calF_G$ with unitary net flow $\ba=\be_1-\be_{n+1}$ is given by
$$\vol\calF_G = K_G(\bv_{\mathrm{in}}),$$
where $\bv_{\mathrm{in}} = (0, u_2, \ldots, u_{n}, -(m-n-u_{n+1}))$.
\end{proposition}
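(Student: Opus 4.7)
My plan is to prove the formula via the Postnikov--Stanley triangulation approach, which aligns with the combinatorial perspective developed later in the paper and is the route taken by M\'esz\'aros and Morales~\cite{MM19} to give an alternative derivation of the Lidskii formulas. Their argument can be adapted to prove directly this specialization to unitary flow; an alternative would be the residue-based proof of Baldoni--Vergne, but the combinatorial approach is better suited to the perspective of the paper.

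The first step is to fix any framing on $G$ and invoke the associated framed triangulation of $\calF_G$, to be developed fully in Section~\ref{sec.framed_triangulations}. By that theory, every top-dimensional simplex of the triangulation is unimodular of dimension $d=m-n$, so that
\[
\vol \calF_G = \#\{\text{top-dimensional simplices of the triangulation}\}.
\]
The proof therefore reduces to showing that the number of such simplices equals $K_G(\bv_{\mathrm{in}})$.

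For this, I would construct an explicit bijection between the top-dimensional simplices of the framed triangulation and the vector partitions of $\bv_{\mathrm{in}}$ with respect to $\Phi_G^+$. Each simplex corresponds to a coherent collection of $d+1$ routes of $G$; at each interior vertex $i$, exactly one incoming edge is designated by the framing as ``principal,'' leaving $u_i=\inedge_i-1$ ``extra'' incoming edges. The multiset of these extra edges, viewed as positive roots in $\Phi_G^+$, sums to $\bv_{\mathrm{in}}$ by a direct node-balance count: the $\be_1$-coefficient is $0$ (no edge out of the source is ``extra''), the $\be_i$-coefficient at an inner vertex is $u_i$, and the $\be_{n+1}$-coefficient is $-(m-n-u_{n+1})$. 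Conversely, starting from any vector partition of $\bv_{\mathrm{in}}$, the framing canonically re-pairs the available incoming and outgoing edges at each interior vertex, reconstructing a coherent family of routes and hence a unique simplex.

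The main obstacle lies in verifying the inverse direction rigorously: given an arbitrary multiset of edges summing to $\bv_{\mathrm{in}}$, one must show that the framing-directed reconstruction actually produces a valid coherent family of routes that forms a simplex of the triangulation. This is the combinatorial heart of the Postnikov--Stanley subdivision procedure, and making it precise requires the machinery of Section~\ref{sec.framed_triangulations}. In the present paper one can simply invoke the Baldoni--Vergne formula as a known result, which is the route taken here.
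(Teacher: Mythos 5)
The paper offers no argument for this proposition: it is imported verbatim from Baldoni--Vergne \cite[Theorem 38]{BV08}, so your closing sentence (simply invoke the known result) is exactly what the authors do, and that is the only part of your proposal that matches the paper.

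The triangulation proof you sketch as an alternative has a genuine gap at its central step, beyond the inverse direction you already flag. The rule ``at each inner vertex $i$ the framing designates one principal incoming edge, and the remaining $u_i$ incoming edges are extra'' produces a set of edges determined by the framing alone, independent of which maximal clique you started from; a map that sends every simplex to the same multiset cannot be the forward direction of a bijection with vector partitions. The node-balance computation is also incorrect: if $S$ is the set of non-principal edges, the coefficient of $\be_k$ in $\sum_{(j,i)\in S}(\be_j-\be_i)$ equals the number of edges of $S$ leaving $k$ minus $u_k$ for $k\geq 2$, and equals the number of edges of $S$ leaving the source for $k=1$; neither agrees with the corresponding coordinate of $\bv_{\mathrm{in}}$ in general, since an edge $(1,j)$ is non-principal at $j$ whenever $j$ has a different incoming edge that the framing prefers. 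The actual Postnikov--Stanley argument, as presented by M\'esz\'aros and Morales~\cite{MM19}, is structurally different: to a maximal clique one associates, at each inner vertex $i$, the non-crossing spanning tree of the bipartite graph on $\inedge(i)\sqcup\outedge(i)$ traced out by the routes through $i$, and it is the degree sequences of these trees (shifted by one), which genuinely vary with the clique, that assemble into the integer flow with netflow $\bv_{\mathrm{in}}$ and are uniquely reconstructible in the reverse direction. Without that tree/degree-sequence mechanism both directions of your proposed bijection fail, so the sketch does not constitute a proof; the citation, as in the paper, remains the correct way to dispose of this statement.
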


For flow polytopes of $\nu$-caracol graphs with unitary netflow, the Kostant partition function $K_{\car(\nu)}(\bv_{\mathrm{in}})$ has a simple combinatorial interpretation which we now describe.
This generalizes the construction for the case $\nu= NE^{k-1}NE^k\cdots NE^k$ considered in~\cite[Section 2.4]{Y}. 

\begin{definition}
Let $\nu=NE^{\nu_1}\cdots NE^{\nu_a}$ be a lattice path from $(0,0)$ to $(b,a)$. 
An {\em in-degree gravity diagram} for the flow polytope $\calF_{\car(\nu)}$ consists of a collection of dots and line segments with the following properties:
\begin{enumerate}
\item[(i)] The dots are arranged in columns indexed by the simple roots $\alpha_3,\ldots, \alpha_{a+2}$, with $\nu_1+\cdots+\nu_{j-2}$ dots in the column indexed by $\alpha_j$, and all dots are drawn justified upwards.
\item[(ii)] Horizontal line segments may be drawn between dots in consecutive columns so that each dot is incident to at most one line segment.  A trivial line segment is a singleton dot.  All non-trivial line segments must contain a dot in the column indexed by $\alpha_{a+2}$ (that is, all line segments are justified to the right).  Longer line segments appear above shorter line segments.
\end{enumerate}
We denote the set of all in-degree gravity diagrams by $\calG_{\car(\nu)}(\bv_{\mathrm{in}})$.
See Figure~\ref{fig.embed} for an example of an in-degree gravity diagram.
\end{definition}

The proof of the following Lemma is analogous to the one in~\cite[Theorem 3.1]{BGHHKMY19} for out-degree gravity diagrams. See also~\cite{Y}.
\begin{lemma}\label{lem.ingrav_bijection1}
There is a bijection between the set of vector partitions of $\bv_{\mathrm{in}}$ with respect to $\Phi_{\car(\nu)}^+$ and the set of in-degree gravity diagrams for the flow polytope $\calF_{\car(\nu)}$.
Consequently, $K_{\car(\nu)}(\bv_{\mathrm{in}}) = | \calG_{\car(\nu)}(\bv_{\mathrm{in}})|$.
\end{lemma}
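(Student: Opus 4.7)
The plan is to construct an explicit bijection in which each connected segment of a gravity diagram corresponds to a single positive root in the partition. First I would determine which roots can appear with nonzero coefficient. Since the first coordinate of $\bv_{\mathrm{in}}$ is $0$ and no edge of $\car(\nu)$ points into vertex $1$, the balance equation at vertex $1$ forces every coefficient of a root $\alpha_e$ with $e$ emanating from vertex $1$ to vanish. An identical argument at vertex $2$, using $u_2=0$, then kills both $\alpha_2$ and the root $\alpha_2+\alpha_3+\cdots+\alpha_n$ attached to the edge $(2,n+1)$. Consequently, any vector partition of $\bv_{\mathrm{in}}$ uses only the simple roots $\alpha_j$ for $j=3,\dots,n$, with some multiplicities $c^0_j$, together with the long roots $\alpha_j+\alpha_{j+1}+\cdots+\alpha_n$ for $j=3,\dots,n-1$, with some multiplicities $c^2_j$.

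Next, I would define a map $\Psi$ from such partitions to in-degree gravity diagrams by placing, for each $j=3,\dots,n-1$, exactly $c^2_j$ horizontal segments spanning columns $\alpha_j,\dots,\alpha_n$, stacked from top to bottom in decreasing order of length, and then filling the unoccupied dots in each column $\alpha_j$ with $c^0_j$ singletons. The crux is to verify that $\Psi$ actually lands in $\calG_{\car(\nu)}(\bv_{\mathrm{in}})$. The balance equations at vertices $3,\dots,n$ read $c^0_j + c^2_j - c^0_{j-1} = \nu_{j-2}$ with the conventions $c^0_2 = 0$ and $c^2_n = 0$; telescoping yields $c^0_j + \sum_{i=3}^{j} c^2_i = \nu_1+\cdots+\nu_{j-2}$, which is precisely the prescribed number of dots in column $\alpha_j$. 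Hence the singletons exactly fill the gaps left by the long segments, every dot belongs to at most one segment, and the longer-above-shorter rule makes the placement canonical. The inverse $\Psi^{-1}$ reads $c^2_j$ as the number of length-$(n-j+1)$ segments and $c^0_j$ as the number of singletons in column $\alpha_j$, and non-negativity of the $c^0_j$ translates exactly into the inequalities $\sum_{i=3}^{j} c^2_i \le \nu_1+\cdots+\nu_{j-2}$ forced by the dot counts.

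The main obstacle is the combinatorial bookkeeping needed to confirm that the four constraints on a gravity diagram (prescribed dot counts per column, at-most-one segment per dot, right-justification of non-trivial segments, and longer above shorter) encode exactly the balance equations and non-negativity conditions of the vector partition, with no extra freedom on either side. The telescoping identity does most of the work. Since the argument runs along completely analogous lines to the one for out-degree gravity diagrams in \cite[Theorem 3.1]{BGHHKMY19}, I expect only minor adjustments, corresponding to using incoming edges rather than outgoing edges at each internal vertex, to carry the proof through.
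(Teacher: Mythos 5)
Your proposal is correct and is essentially the argument the paper intends: the paper itself omits the proof, simply citing the analogous out-degree case of \cite[Theorem 3.1]{BGHHKMY19}, and your explicit bijection (killing all roots supported on $\alpha_1$ or $\alpha_2$ via the balance equations at vertices $1$ and $2$, recording the surviving multiplicities $c^0_j$, $c^2_j$ of simple and right-justified roots, and matching them to singletons and right-justified segments via the telescoped identity $c^0_j+\sum_{i\le j}c^2_i=\nu_1+\cdots+\nu_{j-2}$) is precisely the content of that reference adapted to in-degrees. The details you supply, in particular that the dot counts per column equal the coefficients of $\bv_{\mathrm{in}}$ in the simple-root basis and that the ``longer above shorter'' rule makes the diagram canonical given the multiset of segment lengths, correctly fill the gap left by the citation.
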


\begin{example}
Let $\nu=NE^2NENNE^3NE$. 
A vector partition of $\bv_{\mathrm{in}} %=(0,0,2,1,0,3,1,-7) 
= 2\alpha_3+3\alpha_4+3\alpha_5+6\alpha_6+7\alpha_7$ with respect to the positive roots in $\Phi_{\car(\nu)}^+$ is
$$\bv_{\mathrm{in}} = \alpha_{(3,8)}
	+ \alpha_{(5,8)} 
	+ 2\alpha_{(6,8)}
	+\alpha_3+2\alpha_4+\alpha_5+2\alpha_6+3\alpha_7.$$
This vector partition is represented by the gravity diagram on the left of Figure~\ref{fig.embed}.
\end{example} 

We are ready to make a connection from in-degree gravity diagrams to $\nu$-Dyck paths. 

\begin{lemma}\label{lem.ingrav_bijection2}
There is a bijection between the set $\calG_{\car(\nu)}(\bv_{\mathrm{in}})$ of in-degree gravity diagrams for the flow polytope $\calF_{\car(\nu)}$ and the set $\calD_\nu$ of $\nu$-Dyck paths.
\end{lemma}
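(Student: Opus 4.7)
The plan is to give an explicit bijection $\Phi \colon \calG_{\car(\nu)}(\bv_{\mathrm{in}}) \to \calD_\nu$ in which each non-trivial segment of a gravity diagram becomes an east step of the associated $\nu$-Dyck path at a prescribed height. Throughout, write $N_i := \nu_1 + \cdots + \nu_i$, so column $\alpha_j$ of an in-degree gravity diagram contains $N_{j-2}$ dots; and for $\mu = NE^{\mu_1}\cdots NE^{\mu_a}$ write $M_i := \mu_1 + \cdots + \mu_i$, so that $\mu \in \calD_\nu$ iff $M_i \leq N_i$ for $i \leq a-1$ and $M_a = N_a$. First I would unpack the combinatorial data of the gravity diagram: every non-trivial segment extends from some column $\alpha_c$ with $c \in \{3,\ldots,a+1\}$ to the rightmost column $\alpha_{a+2}$, so letting $s_c$ be the number of segments starting in column $\alpha_c$ and $k_j := s_3 + \cdots + s_j$, the nesting and single-occupancy conditions force the top $k_j$ dots of column $\alpha_j$ to be segment-covered and the remaining $N_{j-2} - k_j$ to be singletons. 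Hence an in-degree gravity diagram is determined by a tuple $(s_3,\ldots,s_{a+1}) \in \bbN^{a-1}$ satisfying $k_j \leq N_{j-2}$ for $j = 3,\ldots,a+1$; the constraint at $j=a+2$ is automatic, since $k_{a+2}=k_{a+1}\leq N_{a-1}\leq N_a$.

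Next I would define
$$\Phi(s_3,\ldots,s_{a+1}) \,:=\, NE^{s_3}\,NE^{s_4}\cdots NE^{s_{a+1}}\,NE^{\,b-k_{a+1}},$$
and check that the output $\mu = NE^{\mu_1}\cdots NE^{\mu_a}$ lies in $\calD_\nu$: one reads off $M_i = k_{i+2}$ for $i \leq a-1$ and $M_a = b = N_a$, so $k_{i+2}\leq N_i$ translates directly to $M_i \leq N_i$, while $k_{a+1} \leq N_{a-1} \leq N_a$ ensures $\mu_a \geq 0$. The inverse $\Psi \colon \calD_\nu \to \calG_{\car(\nu)}(\bv_{\mathrm{in}})$ is transparent: given $\mu$, set $s_c := \mu_{c-2}$ for $c=3,\ldots,a+1$, stack $s_c$ segments of length $a+3-c$ in column $\alpha_c$ ordered by decreasing length, and fill the remaining $N_{j-2}-k_j$ positions of each column $\alpha_j$ with singletons. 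The $\nu$-Dyck inequalities $M_i \leq N_i$ are exactly what make all singleton counts nonnegative, and the two maps are inverses of each other by construction.

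I do not anticipate any substantive obstacle; the content is simply in unpacking the diagram into the tuple $(s_3,\ldots,s_{a+1})$ and matching the two sets of constraints. The one point demanding a little care is that no $s_c$ indexes the last column $\alpha_{a+2}$—a non-trivial segment must begin strictly to its left—so the final east-block length $\mu_a = b - k_{a+1}$ is forced by the total number of east steps rather than recorded as an $s_c$, and one must verify separately that this residual count is nonnegative (which is the bound $k_{a+1} \leq N_a$ noted above).
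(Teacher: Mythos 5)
Your bijection coincides with the paper's: your $s_c$ (the number of segments starting in column $\alpha_c$) is exactly the number of east steps of the associated Dyck path at height $c-2$, which is what the paper extracts by rotating the gravity diagram $90$ degrees, embedding it in the grid above $\nu$, and taking the boundary of the segment-covered region; your inequality $k_j \leq N_{j-2}$ is precisely the ``weakly above $\nu$'' condition checked there. Your write-up is more algebraic (encoding the diagram as the tuple $(s_3,\dots,s_{a+1})$ and matching partial-sum inequalities) where the paper's is geometric, but the map, its inverse, and the verification are the same in content, and your argument is correct.
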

\begin{proof}
First recall that a $\nu$-Dyck path is a lattice path in the rectangular grid from $(0,0)$ to $(b,a)$ that lies weakly above the path $\nu$.
Also recall that in an in-degree gravity diagram for $\car(\nu)$, the column indexed by $\alpha_k$ has $\nu_1+\cdots+\nu_k$  dots, for $k=3,\ldots, a+2$.  
This is precisely the number of squares in the row between the lines $x=0$, $y=k-1$, $y=k$, and above $\nu$.

Therefore, given an in-degree gravity diagram $\Gamma\in \calG_{\car(\nu)}(\bv_{\mathrm{in}})$, we may rotate it $90$ degrees counterclockwise and embed the array of dots into the squares of $\bbZ^2$ so that the dots in the column indexed by $\alpha_{a+2}$ lie in the row of squares just above the line $y=a$, and the dots in the first row of $\Gamma$ lie in the column of squares just right of the line $x=0$.  
By the previous observation, we see that the dots of $\Gamma$ occupy every square in $\bbZ^2$ between the lines $x=0$, $x=b$ and $y=a+1$, and which lie above the path $\nu$.
See Figure~\ref{fig.embed} for an illustration.

Line segments of the rotated embedded gravity diagram $\Gamma$ are now vertical, and they extend down from just above the top row of the rectangular grid.  
The lengths of these vertical line segments are weakly decreasing from left to right, so the line segments of $\Gamma$ define a unique $\nu$-Dyck path that separates the dots in $\Gamma$ which are incident to a line segment in $\Gamma$, from the dots which are not incident to any (proper) line segment in $\Gamma$.
This construction defines a map $\Xi: \calG_{\car(\nu)}(\bv_{\mathrm{in}}) \rightarrow \calD_\nu$.

Conversely, any $\nu$-Dyck path defines an in-degree gravity diagram $\Gamma$ for $\calF_{\car(\nu)}$, where every dot of $\Gamma$ that occupies a square that is above the $\nu$-Dyck path is incident to a line segment of $\Gamma$, and every dot of $\Gamma$ that occupies a square that is below the $\nu$-Dyck path is not incident to any (proper) line segment of $\Gamma$.
Therefore, $\Xi$ is a bijection.
\end{proof}

\begin{figure}[ht!]
\begin{center}
\begin{tikzpicture}
\begin{scope}[scale=0.4, xshift=0, yshift=0]	
	%the in gravity diagram
	\vertex[fill, minimum size=3pt] at (4,6) {};
	\vertex[fill, minimum size=3pt] at (4,7) {}; 
	\vertex[fill, minimum size=3pt] at (5,5) {};
	\vertex[fill, minimum size=3pt] at (5,6) {};
	\vertex[fill, minimum size=3pt] at (5,7) {};	
	\vertex[fill, minimum size=3pt] at (6,5) {}; 
	\vertex[fill, minimum size=3pt] at (6,6) {};
	\vertex[fill, minimum size=3pt] at (6,7) {};	
	\vertex[fill, minimum size=3pt] at (7,2) {};
	\vertex[fill, minimum size=3pt] at (7,3) {};
	\vertex[fill, minimum size=3pt] at (7,4) {};
	\vertex[fill, minimum size=3pt] at (7,5) {}; 
	\vertex[fill, minimum size=3pt] at (7,6) {};
	\vertex[fill, minimum size=3pt] at (7,7) {};
	\vertex[fill, minimum size=3pt] at (8,1) {};	
	\vertex[fill, minimum size=3pt] at (8,2) {};
	\vertex[fill, minimum size=3pt] at (8,3) {};
	\vertex[fill, minimum size=3pt] at (8,4) {};
	\vertex[fill, minimum size=3pt] at (8,5) {}; 
	\vertex[fill, minimum size=3pt] at (8,6) {};
	\vertex[fill, minimum size=3pt] at (8,7) {};		
	\draw (4,7)--(8,7);
	\draw (6,6)--(8,6);
	\draw (7,5)--(8,5);
	\draw (7,4)--(8,4);	
	\node at (4, 8) {\tiny$\alpha_3$};	
	\node at (5, 8) {\tiny$\alpha_4$};		
	\node at (6, 8) {\tiny$\alpha_5$};
	\node at (7, 8) {\tiny$\alpha_6$};
	\node at (8, 8) {\tiny$\alpha_7$};			
\end{scope}
%bijection between gravity diagram and dyck path
\begin{scope}[scale=0.45, xshift=420, yshift=30]
	\draw[fill, color=gray!33] (0,0) rectangle (3,1);
	\draw[fill, color=gray!33] (2,1) rectangle (4,2);
	\draw[fill, color=gray!33] (3,2) rectangle (4,3);
	\draw[fill, color=gray!33] (3,3) rectangle (7,4);
	\draw[fill, color=gray!33] (6,4) rectangle (7,5);
	\draw[very thin, color=gray!100] (0,0) grid (7,5);
		
	\node at (-1,0) {\scriptsize$(0,0)$};
	\node at (8,5) {\scriptsize$(7,5)$};
	
	%signature
	\node at (10.5,4.5) {\scriptsize$\textcolor{black}{\nu_5=1}$};
	\node at (10.5,3.5) {\scriptsize$\textcolor{black}{\nu_4=3}$};
	\node at (10.5,2.5) {\scriptsize$\textcolor{black}{\nu_3=0}$};
	\node at (10.5,1.5) {\scriptsize$\textcolor{black}{\nu_2=1}$};
	\node at (10.5,0.5) {\scriptsize$\textcolor{black}{\nu_1=2}$};
	
	%the gravity diagram
	\vertex[fill, minimum size=3pt] at (0.5, 5.5) {}; 
	\vertex[fill, minimum size=3pt] at (1.5, 5.5) {};
	\vertex[fill, minimum size=3pt] at (2.5, 5.5) {}; 
	\vertex[fill, minimum size=3pt] at (3.5, 5.5) {}; 
	\vertex[fill, minimum size=3pt] at (4.5, 5.5) {};
	\vertex[fill, minimum size=3pt] at (5.5, 5.5) {};
	\vertex[fill, minimum size=3pt] at (6.5, 5.5) {};	
	\vertex[fill, minimum size=3pt] at (0.5, 4.5) {}; 
	\vertex[fill, minimum size=3pt] at (1.5, 4.5) {};
	\vertex[fill, minimum size=3pt] at (2.5, 4.5) {}; 
	\vertex[fill, minimum size=3pt] at (3.5, 4.5) {};
	\vertex[fill, minimum size=3pt] at (4.5, 4.5) {};	
	\vertex[fill, minimum size=3pt] at (5.5, 4.5) {};
	\vertex[fill, minimum size=3pt] at (0.5, 3.5) {}; 
	\vertex[fill, minimum size=3pt] at (1.5, 3.5) {};
	\vertex[fill, minimum size=3pt] at (2.5, 3.5) {};
	\vertex[fill, minimum size=3pt] at (0.5, 2.5) {}; 
	\vertex[fill, minimum size=3pt] at (1.5, 2.5) {};
	\vertex[fill, minimum size=3pt] at (2.5, 2.5) {};	
	\vertex[fill, minimum size=3pt] at (0.5, 1.5) {};
	\vertex[fill, minimum size=3pt] at (1.5, 1.5) {};

	\draw (0.5,1.5)--(0.5,5.5);
	\draw (1.5,3.5)--(1.5,5.5);
	\draw (2.5,4.5)--(2.5,5.5);
	\draw (3.5,4.5)--(3.5,5.5);			
	\node at (-0.5, 1.5) {\begin{turn}{90}\tiny$\alpha_3$\end{turn}};	
	\node at (-0.5, 2.5) {\begin{turn}{90}\tiny$\alpha_4$\end{turn}};	
	\node at (-0.5, 3.5) {\begin{turn}{90}\tiny$\alpha_5$\end{turn}};	
	\node at (-0.5, 4.5) {\begin{turn}{90}\tiny$\alpha_6$\end{turn}};	
	\node at (-0.5, 5.5) {\begin{turn}{90}\tiny$\alpha_7$\end{turn}};	
					
	%Dyck path
	\draw[very thick, color=red] (0,0)--(0,1)--(1,1)--(1,3)--(2,3)--(2,4)--(4,4)--(4,5)--(7,5);
\end{scope}
\end{tikzpicture}
\end{center}
\caption{A gravity diagram (left) representing a vector partition of $\bv_{\mathrm{in}}$ associated to $\car(\nu)$ for $\nu=NE^2NENNE^3NE$.
The bijection $\Xi$ of Theorem~\ref{thm.volume} sends the gravity diagram to the $\nu$-Dyck path via a $90$ degree rotation (right).
}
\label{fig.embed}
\end{figure}
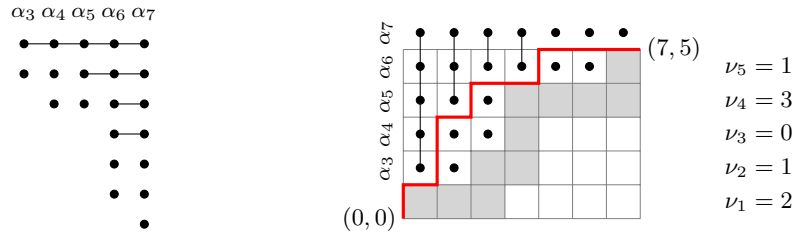

\volumethm*
\begin{proof}
Combining Proposition~\ref{thm.Lidskii} and Lemmas~\ref{lem.ingrav_bijection1} and~\ref{lem.ingrav_bijection2}, the normalized volume of $\calF_{\car(\nu)}$ is
$$\vol \calF_{\car(\nu)}  = K_{\car(\nu)}(\bv_{\mathrm{in}}) = | \calG_{\car(\nu)}(\bv_{\mathrm{in}})| = |\calD_\nu| = \Cat(\nu).$$
\end{proof}

In Sections~\ref{sec.lengthframed} and~\ref{sec.planarframed}, we construct two regular unimodular triangulations for the flow polytope $\calF_{\car(\nu)}$ with combinatorially interesting dual graph structures, giving two more proofs that the normalized volume of $\calF_{\car(\nu)}$ is the number of $\nu$-Dyck paths.

%-----------------------------------------------------------------------------
\section{Framed triangulations of a flow polytope}\label{sec.framed_triangulations}
%-----------------------------------------------------------------------------

%-----------------------------------------------------------------------------

We now describe the family of triangulations defined by Danilov, Karzanov, and Koshevoy~\cite{DKK12}, interpreted as special cases of the Postnikov--Stanley triangulations described by  M\'esz\'aros, Morales and Striker in~\cite{MMS19}.

We call {\em inner vertices} the vertices $\{2,\ldots, n\}$ of a graph $G$ on $n+1$ vertices. A {\em framing} at the inner vertex $i$ is a pair of linear orders $(\prec_{\inedge(i)}, \prec_{\outedge(i)})$ on the incoming and outgoing edges at $i$.
A {\em framed graph}, denoted $(G,\prec)$,  is a graph with a framing at every inner vertex.  
In Sections~\ref{sec.lengthframed} and~\ref{sec.planarframed}, we will consider two specific framings of the caracol graphs $\car(\nu)$, which lead to combinatorially interesting triangulations of $\calF_{\car(\nu)}$. An example of these two framings is given in Figure~\ref{fig.two_framings_ex}. 

\begin{figure}[ht!]
\centering
\begin{tikzpicture}
%picture of graph with length framing
\begin{scope}[scale=.6]
	\vertex[fill,label=below:\scriptsize{$1$}](a1) at (1,0) {};
	\vertex[fill,label=below:\scriptsize{$2$}](a2) at (2,0) {};
	\vertex[fill,label=below:\scriptsize{$3$}](a3) at (3,0) {};
	\vertex[fill,label=below:\scriptsize{$4$}](a4) at (4,0) {};
	\vertex[fill,label=below:\scriptsize{$5$}](a5) at (5,0) {};
	\vertex[fill,label=below:\scriptsize{$6$}](a6) at (6,0) {};
    \vertex[fill,label=below:\scriptsize{$7$}](a7) at (7,0) {};
    \vertex[fill,label=below:\scriptsize{$8$}](a8) at (8,0) {};
	
	%straight line	
	\draw[-stealth, thick] (a1)--(a2);
	\draw[-stealth, thick] (a2)--(a3);
	\draw[-stealth, thick] (a3)--(a4);
	\draw[-stealth, thick] (a4)--(a5);
	\draw[-stealth, thick] (a5)--(a6);
	\draw[-stealth, thick] (a6)--(a7);
	\draw[-stealth, thick] (a7)--(a8);
	%top arcs			
	\draw[-stealth, thick] (a1) .. controls (1.25, 1.3) and (2.75, 1.3) .. (a3);	
	\draw[-stealth, thick] (a1) .. controls (1.25, .8) and (2.75, .8) .. (a3);
	\draw[-stealth, thick] (a1) .. controls (1.25, 1.7) and (3.75, 1.7) .. (a4);
	\draw[-stealth, thick] (a1) .. controls (1.25, 2.2) and (5.75, 2.2) .. (a6);	
	\draw[-stealth, thick] (a1) .. controls (1.25, 2.7) and (5.75, 2.7) .. (a6);
	\draw[-stealth, thick] (a1) .. controls (1.25, 3.2) and (5.75, 3.2) .. (a6);
	\draw[-stealth, thick] (a1) .. controls (1.25, 3.7) and (6.75, 3.7) .. (a7);

	%bottom arcs
	\draw[-stealth, thick] (a6) to[out=50,in=130] (a8);
	\draw[-stealth, thick] (a5) to[out=50,in=130] (a8);
	\draw[-stealth, thick] (a4) to[out=50,in=130] (a8);
	\draw[-stealth, thick] (a3) to[out=50,in=130] (a8);
	\draw[-stealth, thick] (a2) to[out=50,in=130] (a8);

	%framing order at vx 6
	\node at (5.5,0.1) {\textcolor{red}{\scriptsize$4$}};
	\node at (4.35,2.45) {\textcolor{red}{\scriptsize$1$}};
	\node at (4.35,2.05) {\textcolor{red}{\scriptsize$2$}};
	\node at (4.35,1.65) {\textcolor{red}{\scriptsize$3$}};	

	\node at (6.25,.4) {\textcolor{red}{\scriptsize$1$}};
	\node at (6.4,0) {\textcolor{red}{\scriptsize$2$}};	
\end{scope}
%picture of graph with planar framing
\begin{scope}[xshift=200, scale=.6]
	\vertex[fill,label=below:\scriptsize{$1$}](a1) at (1,0) {};
	\vertex[fill,label=below:\scriptsize{$2$}](a2) at (2,0) {};
	\vertex[fill,label=below:\scriptsize{$3$}](a3) at (3,0) {};
	\vertex[fill,label=below:\scriptsize{$4$}](a4) at (4,0) {};
	\vertex[fill,label=below:\scriptsize{$5$}](a5) at (5,0) {};
	\vertex[fill,label=below:\scriptsize{$6$}](a6) at (6,0) {};
	\vertex[fill,label=below:\scriptsize{$7$}](a7) at (7,0) {};
	\vertex[fill,label=below:\scriptsize{$8$}](a8) at (8,0) {};

	%straight line	
	\draw[-stealth, thick] (a1)--(a2);
	\draw[-stealth, thick] (a2)--(a3);
	\draw[-stealth, thick] (a3)--(a4);
	\draw[-stealth, thick] (a4)--(a5);
	\draw[-stealth, thick] (a5)--(a6);
	\draw[-stealth, thick] (a6)--(a7);
	\draw[-stealth, thick] (a7)--(a8);
	%top arcs			
	\draw[-stealth, thick] (a1) .. controls (1.25, 1.3) and (2.75, 1.3) .. (a3);	
	\draw[-stealth, thick] (a1) .. controls (1.25, .8) and (2.75, .8) .. (a3);
	\draw[-stealth, thick] (a1) .. controls (1.25, 1.7) and (3.75, 1.7) .. (a4);
	\draw[-stealth, thick] (a1) .. controls (1.25, 2.2) and (5.75, 2.2) .. (a6);	
	\draw[-stealth, thick] (a1) .. controls (1.25, 2.7) and (5.75, 2.7) .. (a6);
	\draw[-stealth, thick] (a1) .. controls (1.25, 3.2) and (5.75, 3.2) .. (a6);
	\draw[-stealth, thick] (a1) .. controls (1.25, 3.7) and (6.75, 3.7) .. (a7);
    %bottom arcs
	\draw[-stealth, thick] (a6) to[out=-50,in=230] (a8);
	\draw[-stealth, thick] (a5) to[out=-50,in=230] (a8);
	\draw[-stealth, thick] (a4) to[out=-50,in=230] (a8);
	\draw[-stealth, thick] (a3) to[out=-50,in=230] (a8);
	\draw[-stealth, thick] (a2) to[out=-50,in=230] (a8);

	%framing order at vx 6
	\node at (5.5,0.1) {\textcolor{red}{\scriptsize$4$}};
	\node at (4.35,2.45) {\textcolor{red}{\scriptsize$1$}};
	\node at (4.35,2.05) {\textcolor{red}{\scriptsize$2$}};
	\node at (4.35,1.65) {\textcolor{red}{\scriptsize$3$}};	

	\node at (6.3,.1) {\textcolor{red}{\scriptsize$1$}};
	\node at (6.5,-.25) {\textcolor{red}{\scriptsize$2$}};		
\end{scope}
\end{tikzpicture}
\caption{Length (left) and planar (right) framings at the vertex $6$ of $G=\car(\nu)$.} 
\label{fig.two_framings_ex}
\end{figure}
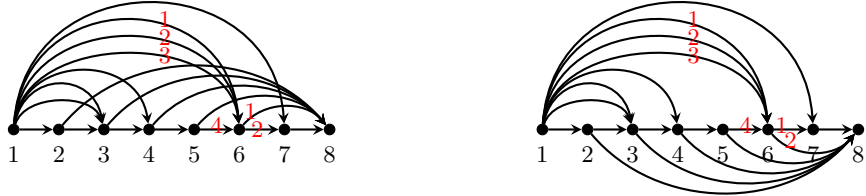

For an inner vertex $i$ of a graph $G$, let $\In(i)$ and $\Out(i)$ respectively denote the set of maximal paths ending at $i$ and the set of maximal paths beginning at $i$.
For a route $R$ containing an inner vertex $i$, let $Ri$ (respectively $iR$) denote the maximal subpath of $R$ ending (respectively beginning) at $i$.
Define linear orders $\prec_{\In(i)}$ and $\prec_{\Out(i)}$ on $\In(i)$ and $\Out(i)$ as follows.
Given paths $R, Q\in\In(i)$, let $j\leq i$ be the smallest vertex after which $Ri$ and $Qi$ coincide. 
Let $e_R$ be the edge of $R$ entering $j$ and let $e_Q$ be the edge of $Q$ entering $j$.
Then $R \prec_{\In(i)} Q$ if and only if $e_R \prec_{\inedge(j)} e_Q$.
Similarly for $R, Q\in\Out(i)$, let $j\geq i$ be the largest vertex before which $iR$ and $iQ$ coincide.
Then $R \prec_{\Out(i)} Q$ if and only if $e_R \prec_{\outedge(j)} e_Q$.

Two routes $R$ and $Q$ containing an inner vertex $i$ are {\em coherent at $i$} if $Ri$ and $Qi$ are ordered the same as $iR$ and $iQ$.
Routes $R$ and $Q$ are {\em coherent} if they are coherent at each common inner vertex.
A set of mutually coherent routes is a {\em clique}.
For a maximal clique $C$, let $\Delta_C$ denote the convex hull of the vertices of $\calF_G$ corresponding to the unitary flows along the routes in $C$.
\begin{proposition}[{Danilov et al.~\cite{DKK12}}]
Let $(G,\prec)$ be a framed graph.  Then 
$$\{\Delta_C \mid C \hbox{ is a maximal clique of $(G,\prec)$}\}$$
is the set of the top dimensional simplices in a regular unimodular triangulation of $\calF_G$.
\end{proposition}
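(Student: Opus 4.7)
The plan is to verify four properties for the collection $\{\Delta_C : C \text{ a maximal clique of } (G,\prec)\}$: that each $\Delta_C$ is a top-dimensional unimodular simplex, that the $\Delta_C$ cover $\calF_G$, that any two intersect in a common face, and that the whole arrangement arises from a height function. Unimodularity is the easiest ingredient: the vertex-edge incidence matrix of a directed acyclic graph is totally unimodular, so any full-dimensional simplex whose vertices are route indicator vectors is automatically unimodular. The size count $|C| = \dim \calF_G + 1 = m - n + 1$ for a maximal clique follows by starting from any route $R_0$ and successively replacing, at a single inner vertex, one incident edge by the next in the framing order; each such local modification produces a new coherent route and this process yields $m-n$ additional routes before saturating.

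For covering and the face-intersection property I would deploy an \emph{uncrossing} argument. Given $f \in \calF_G$ and any nonnegative decomposition $f = \sum_R \lambda_R v_R$ into route indicators, if two routes $R, Q$ in the support are incoherent at a common inner vertex $i$, form the swapped routes $R' = (Ri)(iQ)$ and $Q' = (Qi)(iR)$; one checks that $(R', Q')$ is coherent at $i$ and that replacing $\min(\lambda_R, \lambda_Q)$ units of $R$ and $Q$ by the same amount of $R'$ and $Q'$ preserves $f$. A lexicographic potential (say, the number of incoherent pairs weighted by the smallest vertex of disagreement) strictly decreases under each swap, so iteration terminates at a clique-supported decomposition, proving that every $f$ lies in some $\Delta_C$. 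The same procedure shows that any point in the relative interior of $\Delta_C$ has a \emph{unique} positive expression as a combination of coherent routes, so it cannot lie in the interior of any other $\Delta_{C'}$; points on lower-dimensional faces lie in common subcliques, giving the face-intersection condition.

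The main obstacle is regularity, and this is where I expect the most delicate work. The plan is to construct an explicit height function $h$ on routes so that the triangulation is the projection of the lower envelope of the lifted vertices $\{(v_R, h(R))\}$. A natural candidate assigns to each edge $e$ a weight $w(e)$ encoding its rank in the framing orders at its endpoints, with gaps growing rapidly (for example, $w(e) = \sum_i N^{\rho_i(e)}$ for large $N$, where $\rho_i(e)$ is the rank of $e$ at vertex $i$), and sets $h(R) = \sum_{e \in R} w(e)$. One must then verify that uncrossing a pair of incoherent routes strictly decreases $\sum_R \lambda_R h(R)$; the large-gap choice of weights ensures that the drop contributed at the vertex of incoherence dominates any incidental changes elsewhere. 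Once this strict decrease is established, the unique minimizer of the lifted linear program $\min \sum_R \lambda_R h(R)$ subject to $\sum_R \lambda_R v_R = f$ and $\lambda_R \geq 0$ is clique-supported, which is precisely the condition for the lower faces of the lift to project onto the $\Delta_C$. Tuning the weight scheme so that the strict decrease holds uniformly over every possible uncrossing is the technical heart of the argument.
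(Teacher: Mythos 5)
The paper does not prove this proposition at all: it is quoted from Danilov, Karzanov and Koshevoy \cite{DKK12} and used as a black box, so there is no internal argument to compare yours against. Judged on its own, your outline has the right architecture --- uncrossing incoherent pairs to obtain coherent decompositions, and a weight function to certify regularity is indeed the shape of the DKK argument --- but the steps that actually carry the theorem are asserted rather than proved, so as written this is a plan for a proof rather than a proof.

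Concretely, three gaps. First, the cardinality claim $|C|=m-n+1$ for maximal cliques: your ``successively replace one incident edge by the next in the framing order'' procedure is not shown to produce routes that remain coherent with everything generated so far, nor that the resulting indicator vectors are affinely independent, nor that the process saturates after exactly $m-n$ steps; without this you have neither that $\Delta_C$ is a simplex nor that it is top-dimensional. (Relatedly, total unimodularity of the incidence matrix gives integrality of $\calF_G$, but the jump to ``every full-dimensional simplex on route vectors is unimodular'' needs the compressedness of $0/1$ polytopes cut out by totally unimodular systems, which you should at least invoke.) Second, and most seriously, uniqueness of the coherent decomposition of an interior point does \emph{not} follow from ``the same procedure'': the uncrossing algorithm proves existence of a clique-supported decomposition, but different orders of performing the swaps could a priori terminate in different cliques, and ruling that out is precisely the content of the proposition --- it is what forces distinct maximal simplices to have disjoint interiors and to meet in common faces. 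This uniqueness statement is the crux of \cite{DKK12} and needs its own argument. Third, regularity is left as a proposal: you never verify that any concrete weight scheme makes every uncrossing strictly decrease the total height, and you yourself flag this as ``the technical heart.'' Until those three points are filled in, the covering step is the only part of the statement that is actually established.
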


%-----------------------------------------------------------------------------
\section{The length-framed triangulation} 
\label{sec.lengthframed}
%-----------------------------------------------------------------------------

The goal of this section is to show that the flow polytope $\calF_{\car(\nu)}$ has a regular unimodular triangulation whose dual graph structure is given by the Hasse diagram of the $\nu$-Tamari lattice. 
The triangulation in question arises as a DKK triangulation with the {\em length-framing}. We show that this length-framed triangulation is combinatorially equivalent to the $\nu$-Tamari complex introduced by Ceballos, Padrol, and Sarmiento in \cite{CPS19}.

\subsection{The \texorpdfstring{$\nu$-}-Tamari lattice}
\label{subsec:nu-Tamari}
The $\nu$-Tamari lattice $\mathrm{Tam}(\nu)$ was introduced by Pr\'eville-Ratelle and Viennot~\cite[Theorem 1]{PV17} as a partial order on the set of $\nu$-Dyck paths. 
Using an alternative description with $(I,\overline{J})$-trees,
Ceballos, Padrol and Sarmiento~\cite{CPS19} 
realized the $\nu$-Tamari lattice as the one-skeleton of a polyhedral complex known as the $\nu$-associahedron $K_\nu$, which generalizes the classical associahedron. 
In \cite{CPS20}, they gave further descriptions of the $\nu$-Tamari lattice using $\nu$-trees and $\nu$-bracket vectors, proving a special case of Rubey's lattice conjecture. 
In \cite{BY} the first and fourth authors generalize $\nu$-Dyck paths and $\nu$-trees to $\nu$-Schr\"oder paths and $\nu$-Schr\"oder trees in their study of the face poset of $K_\nu$. 

In this article we use three descriptions of $\mathrm{Tam}(\nu)$, as each provides a useful viewpoint. 
The $(I,\overline{J})$-tree description shows that the length-framed triangulation is combinatorially the $(I,\overline{J})$-Tamari complex, which is constructed using $(I,\overline{J})$-trees. 
The $\nu$-Dyck path perspective makes it clear when the length-framed and planar-framed triangulations coincide (see Proposition~\ref{prop.whenTheTriangulationsAreEquivalent}). 
Finally, the $\nu$-tree description captures the combinatorial structure of the triangulation succinctly, with $\nu$-trees playing an analogous role to $\nu$-Dyck paths in the planar-framed triangulation (compare Figures~\ref{fig.length_clique_and_nu_tree} and~\ref{fig.planar_clique}).

\begin{figure}[ht!]
\centering
\begin{tikzpicture}
% nu-Dyck path
\begin{scope}[xshift=-100, yshift=-10, scale=0.5]
    \draw[fill, color=gray!33] (0,0) rectangle (1,1); 
    \draw[fill, color=gray!33] (1,1) rectangle (3,2);
    \draw[fill, color=gray!33] (3,2) rectangle (5,3);
    \draw[fill, color=gray!33] (1,0) rectangle (2,1);
    \draw[fill, color=gray!33] (3,1) rectangle (4,2);  
    \draw[very thin, color=gray!100] (0,0) grid (5,3);  
   	\draw[dashed, thick, color=gray!10] (0,0)--(5,3); 
	
    \draw[very thick, red] (0,0) -- (0,2) -- (2,2) -- (2,3) -- (5,3);     
\end{scope}

%(I,J)-tree
\begin{scope}[xshift = 0, yshift = 0, scale=0.6]
	%nodes 
	\node[style={circle,draw, inner sep=1pt, fill=black}] (1)  at (1,0)  {{\tiny \textcolor{white}{1}}};
	\node[style={circle,draw, inner sep=1pt, fill=black}] (3)  at (3,0)  {{\tiny \textcolor{white}{3}}};
	\node[style={circle,draw, inner sep=1pt, fill=black}] (5)  at (5,0)  {{\tiny \textcolor{white}{5}}};
	\node[style={circle,draw, inner sep=1pt, fill=black}] (6)  at (6,0)  {{\tiny \textcolor{white}{6}}};
	\node[style={circle,draw, inner sep=1pt, fill=black}] (8)  at (8,0)  {{\tiny \textcolor{white}{8}}};
	\node[style={circle,draw, inner sep=1pt, fill=black}] (9)  at (9,0)  {{\tiny \textcolor{white}{9}}};
	
	\node[style={circle,draw, inner sep=1pt, fill=none}] (2)  at (2,0)  {\tiny{$\overline{2}$}};
	\node[style={circle,draw, inner sep=1pt, fill=none}] (4)  at (4,0)  {{\tiny $\overline{4}$}};
	\node[style={circle,draw, inner sep=1pt, fill=none}] (7)  at (7,0)  {{\tiny $\overline{7}$}};
	\node[style={circle,draw, inner sep=0pt, fill=none}] (10)  at (10,0)  {{\tiny $\overline{10}$}};	
	
	\node[] (E0)  at (1,-1)  {\textcolor{black}{{\footnotesize $E_0$}}};
	\node[] (N0)  at (2,-1)  {\textcolor{blue}{{\footnotesize $N_0$}}};
	\node[] (E1)  at (3,-1)  {\textcolor{blue}{{\footnotesize $E_1$}}};
    \node[] (N1)  at (4,-1)  {\textcolor{blue}{{\footnotesize $N_1$}}};	
	\node[] (E2)  at (5,-1)  {\textcolor{blue}{{\footnotesize $E_2$}}};
	\node[] (E3)  at (6,-1)  {\textcolor{blue}{{\footnotesize $E_3$}}};	
	\node[] (N2)  at (7,-1)  {\textcolor{blue}{{\footnotesize $N_2$}}};	
	\node[] (E4)  at (8,-1)  {\textcolor{blue}{{\footnotesize $E_4$}}};
	\node[] (E5)  at (9,-1)  {\textcolor{blue}{{\footnotesize $E_5$}}};	
	\node[] (N3)  at (10,-1)  {\textcolor{black}{{\footnotesize $N_3$}}};;	
	
	\draw[] (1) to [bend left=60] (2);
    \draw[] (1) to [bend left=70] (10);

    \draw[] (3) to [bend left=60] (4);
    \draw[] (3) to [bend left=65] (10);

    \draw[] (3) to [bend left=60] (7);    
    \draw[] (5) to [bend left=60] (7);
    \draw[] (6) to [bend left=60] (7);
    
    \draw[] (8) to [bend left=60] (10);
    \draw[] (9) to [bend left=60] (10);
\end{scope}

% nu-tree
\begin{scope}[xshift=220, yshift=-10, scale=0.5]
    \draw[fill, color=gray!33] (0,0) rectangle (1,1); 
    \draw[fill, color=gray!33] (1,1) rectangle (3,2);
    \draw[fill, color=gray!33] (3,2) rectangle (5,3);
    \draw[fill, color=gray!33] (1,0) rectangle (2,1);
    \draw[fill, color=gray!33] (3,1) rectangle (4,2);  
    \draw[very thin, color=gray!100] (0,0) grid (5,3);  
		
	\draw[very thick, color=RawSienna] (0,0)--(0,3)--(5,3);
	\draw[very thick, color=RawSienna] (1,3)--(1,1);
	\draw[very thick, color=RawSienna] (2,2)--(3,2);
    \draw[very thick, color=RawSienna] (2,2)--(1,2);

	\node[circle,color=black,fill=black, inner sep=1.5pt] (r) at (0,3) {};
	\node[circle,color=black,fill=black, inner sep=1.5pt] (r) at (2,2) {};
	\node[circle,color=black,fill=black, inner sep=1.5pt] (r) at (1,3) {};
	\node[circle,color=black,fill=black, inner sep=1.5pt] (r) at (0,0) {};
	\node[circle,color=black,fill=black, inner sep=1.5pt] (r) at (1,1) {};	
	\node[circle,color=black,fill=black, inner sep=1.5pt] (r) at (1,2) {};	
	\node[circle,color=black,fill=black, inner sep=1.5pt] (r) at (3,2) {};
	\node[circle,color=black,fill=black, inner sep=1.5pt] (r) at (4,3) {};	
	\node[circle,color=black,fill=black, inner sep=1.5pt] (r) at (5,3) {};	  
\end{scope}
\end{tikzpicture}
\caption{Three corresponding $\nu$-Catalan objects, with $\nu=NENE^2NE^2$. A $\nu$-Dyck path (left). An $(I,\overline{J})$-tree, with $I=\{1,3,5,6,8,9\}$ and $\overline{J} = \{\overline{2},\overline{4},\overline{7},\overline{10}\}$ (center). The path $\nu$ can be read from the blue labels under the $(I,\overline{J})$-tree. A $\nu$-tree (right), which is a grid representation of the  $(I,\overline{J})$-tree in the center.} 
\label{fig:threeTamariObjects}
\end{figure}
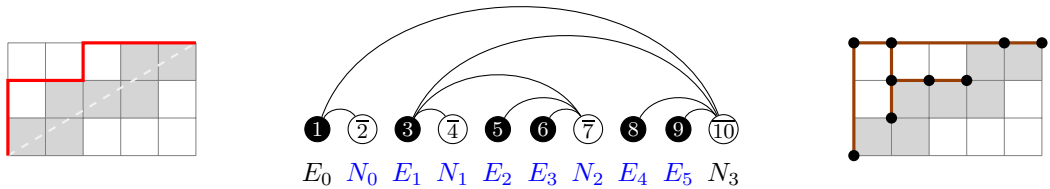

\subsubsection{The \texorpdfstring{$\nu$-}-Tamari lattice as the rotation lattice on \texorpdfstring{$\nu$-}-Dyck paths}
\label{subsec:nu-Tamari-nu-Dyck}

We first give the description of $\mathrm{Tam}(\nu)$ in terms of $\nu$-Dyck paths. 
A {\em valley} of a lattice path is a point $p$ at the end of an east step that is immediately followed by a north step. 
Let $\mu$ be a $\nu$-Dyck path. For any lattice point $p$ on $\mu$, let {\em $\horiz_\nu(p)$} denote the maximum number of east steps that can be added to the right of $p$ without crossing $\nu$. 
For example, $\horiz_\nu(p)$ of the lattice points on the $\nu$-Dyck path in Figure~\ref{fig:threeTamariObjects} are $0,1,3,2,1,3,2,1,0$ as it is traversed from $(0,0)$ to $(5,3)$. 
The set of $\nu$-Dyck paths can then be endowed with the structure of a poset with the covering relation $\lessdot_\nu$ defined as follows. If $p$ is a valley of $\mu$, let $q$ be the first lattice point in $\mu$ after $p$ with $\horiz_\nu(p)= \horiz_\nu(q)$, and let $\mu_{[p,q]}$ denote the subpath of $\mu$ between $p$ and $q$. 
Define a {\em rotation on $\mu$ at $p$} by switching the east step preceding $p$ with the subpath $\mu_{[p,q]}$.
If $\mu'$ is the lattice path obtained by rotating $\mu$ at $p$, then $\mu \lessdot_\nu \mu'$ is a covering relation in $\mathrm{Tam}(\nu)$. 
Let $<_\nu$ denote the transitive closure of the relation $\lessdot_\nu$.

\begin{definition}[Pr\'eville-Ratelle -- Viennot~\cite{PV17}]
The {\em $\nu$-Tamari lattice} $\mathrm{Tam}(\nu)$ is a lattice on the set of $\nu$-Dyck paths induced by the relation $<_\nu$. 
\end{definition}

The leftmost lattice in Figure~\ref{fig.35TamariLattice} gives an example of the $\nu$-Tamari lattice indexed by $\nu$-Dyck paths.

\begin{figure}[ht!]
\centering
\begin{tikzpicture}[scale=.75]
\tikzstyle{vertex}=[circle, fill=ForestGreen, inner sep=0pt, minimum size=5pt]

% nu-Dyck paths

\begin{scope}[scale=.33, xshift=-650, yshift=0]
	\vertex[color=ForestGreen] (a) at (0,0)  {};
	\vertex[color=ForestGreen] (b) at (3,7) {};
	\vertex[color=ForestGreen] (c) at (4.7,11) {};
	\vertex[color=ForestGreen] (d) at (2,15) {};
	\vertex[color=ForestGreen] (e) at (-1,11) {};
	\vertex[color=ForestGreen] (f) at (-4,7) {};
	\vertex[color=ForestGreen] (g) at (-4,3) {};
	\draw[thick, color=ForestGreen] (a) -- (b) -- (c) -- (d) -- (e) -- (f) -- (g) -- (a);
	\draw[thick, color=ForestGreen] (e) --
    (b);
\end{scope}	

%indexes min element in Tamari lattice
\begin{scope}[scale=0.3, xshift=-670, yshift=-40]
	\draw[fill, color=gray!33] (0,0) rectangle (2,1);
	\draw[fill, color=gray!33] (1,1) rectangle (4,2);
	\draw[fill, color=gray!33] (3,2) rectangle (5,3);
	\draw[very thin, color=gray!100] (0,0) grid (5,3);
	\draw[thick, color=red] (0,0)--(0,1)--(1,1)--(1,2)--(3,2)--(3,2)--(3,3)--(5,3);
\end{scope}
\begin{scope}[scale=0.3, xshift=-590, yshift=160]
	\draw[fill, color=gray!33] (0,0) rectangle (2,1);
	\draw[fill, color=gray!33] (1,1) rectangle (4,2);
	\draw[fill, color=gray!33] (3,2) rectangle (5,3);
	\draw[very thin, color=gray!100] (0,0) grid (5,3);
	\draw[thick, color=red] (0,0)--(0,1)--(1,1)--(1,2)--(2,2)--(2,3)--(3,3)--(5,3);
\end{scope}
\begin{scope}[scale=0.3, xshift=-530, yshift=300]
	\draw[fill, color=gray!33] (0,0) rectangle (2,1);
	\draw[fill, color=gray!33] (1,1) rectangle (4,2);
	\draw[fill, color=gray!33] (3,2) rectangle (5,3);
	\draw[very thin, color=gray!100] (0,0) grid (5,3);
	\draw[thick, color=red] (0,0)--(0,1)--(1,1)--(1,3)--(2,3)--(3,3)--(5,3);
\end{scope}
\begin{scope}[scale=0.3, xshift=-830, yshift=450]
	\draw[fill, color=gray!33] (0,0) rectangle (2,1);
	\draw[fill, color=gray!33] (1,1) rectangle (4,2);
	\draw[fill, color=gray!33] (3,2) rectangle (5,3);
	\draw[very thin, color=gray!100] (0,0) grid (5,3);
	\draw[thick, color=red] (0,0)--(0,3)--(3,3)--(5,3);
\end{scope}
\begin{scope}[scale=0.3, xshift=-930, yshift=330]
	\draw[fill, color=gray!33] (0,0) rectangle (2,1);
	\draw[fill, color=gray!33] (1,1) rectangle (4,2);
	\draw[fill, color=gray!33] (3,2) rectangle (5,3);
	\draw[very thin, color=gray!100] (0,0) grid (5,3);
	\draw[thick, color=red] (0,0)--(0,2)--(1,2)--(1,3)--(5,3);
\end{scope}
\begin{scope}[scale=0.3, xshift=-1010, yshift=200]
	\draw[fill, color=gray!33] (0,0) rectangle (2,1);
	\draw[fill, color=gray!33] (1,1) rectangle (4,2);
	\draw[fill, color=gray!33] (3,2) rectangle (5,3);
	\draw[very thin, color=gray!100] (0,0) grid (5,3);
	\draw[thick, color=red] (0,0)--(0,2)--(2,2)--(2,3)--(5,3);
\end{scope}
\begin{scope}[scale=0.3, xshift=-1010, yshift=40]
	\draw[fill, color=gray!33] (0,0) rectangle (2,1);
	\draw[fill, color=gray!33] (1,1) rectangle (4,2);
	\draw[fill, color=gray!33] (3,2) rectangle (5,3);
	\draw[very thin, color=gray!100] (0,0) grid (5,3);
	\draw[thick, color=red] (0,0)--(0,2)--(3,2)--(3,3)--(5,3);
\end{scope}

% (I,J)-trees

\begin{scope}[scale=.33, xshift=0, yshift=0]
	\vertex[color=ForestGreen] (a) at (0,0)  {};
	\vertex[color=ForestGreen] (b) at (3,7) {};
	\vertex[color=ForestGreen] (c) at (4.7,11) {};
	\vertex[color=ForestGreen] (d) at (2,15) {};
	\vertex[color=ForestGreen] (e) at (-1,11) {};
	\vertex[color=ForestGreen] (f) at (-4,7) {};
	\vertex[color=ForestGreen] (g) at (-4,3) {};
	\draw[thick, color=ForestGreen] (a) -- (b) -- (c) -- (d) -- (e) -- (f) -- (g) -- (a);
	\draw[thick, color=ForestGreen] (e) --
    (b);
\end{scope}	
%indexes min element in Tamari lattice
\begin{scope}[xshift=0, yshift=-5, scale=0.25]
	\ijvxw(1) at (1,0){};\ijvxw(3) at (3,0){};\ijvxw(5) at (5,0){};
	\ijvxw(6) at (6,0){};\ijvxw(8) at (8,0){};\ijvxw(9) at (9,0){};
	\ijvxb(2) at (2,0){};\ijvxb(4) at (4,0){};\ijvxb(7) at (7,0){};\ijvxb(10) at (10,0){};
	\draw[] (1) to [bend left=60] (2);	\draw[] (3) to [bend left=60] (4);
    \draw[] (5) to [bend left=60] (7);  \draw[] (6) to [bend left=60] (7);
    \draw[] (8) to [bend left=60] (10); \draw[] (9) to [bend left=60] (10);    
    \draw[] (1) to [bend left=60] (10);
    \draw[] (1) to [bend left=60] (7);  
    \draw[] (1) to [bend left=60] (4);    
\end{scope}
\begin{scope}[xshift=-120, yshift=22, scale=0.25]
	\ijvxw(1) at (1,0){};\ijvxw(3) at (3,0){};\ijvxw(5) at (5,0){};
	\ijvxw(6) at (6,0){};\ijvxw(8) at (8,0){};\ijvxw(9) at (9,0){};
	\ijvxb(2) at (2,0){};\ijvxb(4) at (4,0){};\ijvxb(7) at (7,0){};\ijvxb(10) at (10,0){};
	\draw[] (1) to [bend left=60] (2);	\draw[] (3) to [bend left=60] (4);
    \draw[] (5) to [bend left=60] (7);  \draw[] (6) to [bend left=60] (7);
    \draw[] (8) to [bend left=60] (10); \draw[] (9) to [bend left=60] (10);    
    \draw[] (1) to [bend left=60] (10);
    \draw[] (3) to [bend left=60] (7);    
    \draw[] (1) to [bend left=60] (7);
\end{scope}
\begin{scope}[xshift=-120, yshift=62, scale=0.25]
	\ijvxw(1) at (1,0){};\ijvxw(3) at (3,0){};\ijvxw(5) at (5,0){};
	\ijvxw(6) at (6,0){};\ijvxw(8) at (8,0){};\ijvxw(9) at (9,0){};
	\ijvxb(2) at (2,0){};\ijvxb(4) at (4,0){};\ijvxb(7) at (7,0){};\ijvxb(10) at (10,0){};
	\draw[] (1) to [bend left=60] (2);	\draw[] (3) to [bend left=60] (4);
    \draw[] (5) to [bend left=60] (7);  \draw[] (6) to [bend left=60] (7);
    \draw[] (8) to [bend left=60] (10); \draw[] (9) to [bend left=60] (10);    
    \draw[] (1) to [bend left=60] (10);
    \draw[] (3) to [bend left=60] (7);    
    \draw[] (3) to [bend left=60] (10);
\end{scope}
\begin{scope}[xshift=-95, yshift=95, scale=0.25]
	\ijvxw(1) at (1,0){};\ijvxw(3) at (3,0){};\ijvxw(5) at (5,0){};
	\ijvxw(6) at (6,0){};\ijvxw(8) at (8,0){};\ijvxw(9) at (9,0){};
	\ijvxb(2) at (2,0){};\ijvxb(4) at (4,0){};\ijvxb(7) at (7,0){};\ijvxb(10) at (10,0){};
	\draw[] (1) to [bend left=60] (2);	\draw[] (3) to [bend left=60] (4);
    \draw[] (5) to [bend left=60] (7);  \draw[] (6) to [bend left=60] (7);
    \draw[] (8) to [bend left=60] (10); \draw[] (9) to [bend left=60] (10);    
    \draw[] (1) to [bend left=60] (10);
    \draw[] (5) to [bend left=60] (10);    
    \draw[] (3) to [bend left=60] (10);
\end{scope}
\begin{scope}[xshift=25, yshift=50, scale=0.25]
	\ijvxw(1) at (1,0){};\ijvxw(3) at (3,0){};\ijvxw(5) at (5,0){};
	\ijvxw(6) at (6,0){};\ijvxw(8) at (8,0){};\ijvxw(9) at (9,0){};
	\ijvxb(2) at (2,0){};\ijvxb(4) at (4,0){};\ijvxb(7) at (7,0){};\ijvxb(10) at (10,0){};
	\draw[] (1) to [bend left=60] (2);	\draw[] (3) to [bend left=60] (4);
    \draw[] (5) to [bend left=60] (7);  \draw[] (6) to [bend left=60] (7);
    \draw[] (8) to [bend left=60] (10); \draw[] (9) to [bend left=60] (10);    
    \draw[] (1) to [bend left=60] (10);
    \draw[] (5) to [bend left=60] (10);  
    \draw[] (1) to [bend left=60] (4);    
\end{scope}
\begin{scope}[xshift=40, yshift=90, scale=0.25]
	\ijvxw(1) at (1,0){};\ijvxw(3) at (3,0){};\ijvxw(5) at (5,0){};
	\ijvxw(6) at (6,0){};\ijvxw(8) at (8,0){};\ijvxw(9) at (9,0){};
	\ijvxb(2) at (2,0){};\ijvxb(4) at (4,0){};\ijvxb(7) at (7,0){};\ijvxb(10) at (10,0){};
	\draw[] (1) to [bend left=60] (2);	\draw[] (3) to [bend left=60] (4);
    \draw[] (6) to [bend left=60] (10);  \draw[] (6) to [bend left=60] (7);
    \draw[] (8) to [bend left=60] (10); \draw[] (9) to [bend left=60] (10);    
    \draw[] (1) to [bend left=60] (10);
    \draw[] (5) to [bend left=60] (10);  
    \draw[] (1) to [bend left=60] (4);    
\end{scope}
%indexes max element in Tamari lattice
\begin{scope}[xshift=-65, yshift=135, scale=0.25]
	\ijvxw(1) at (1,0){};\ijvxw(3) at (3,0){};\ijvxw(5) at (5,0){};
	\ijvxw(6) at (6,0){};\ijvxw(8) at (8,0){};\ijvxw(9) at (9,0){};
	\ijvxb(2) at (2,0){};\ijvxb(4) at (4,0){};\ijvxb(7) at (7,0){};\ijvxb(10) at (10,0){};
	\draw[] (1) to [bend left=60] (2);	\draw[] (3) to [bend left=60] (4);
    \draw[] (6) to [bend left=60] (10);  \draw[] (6) to [bend left=60] (7);
    \draw[] (8) to [bend left=60] (10); \draw[] (9) to [bend left=60] (10);    
    \draw[] (1) to [bend left=60] (10);
    \draw[] (5) to [bend left=60] (10);  
    \draw[] (3) to [bend left=60] (10);    
\end{scope}

% nu-Trees

\begin{scope}[scale=.33, xshift=650, yshift=0]
	\vertex[color=ForestGreen] (a) at (0,0)  {};
	\vertex[color=ForestGreen] (b) at (3,7) {};
	\vertex[color=ForestGreen] (c) at (4.7,11) {};
	\vertex[color=ForestGreen] (d) at (2,15) {};
	\vertex[color=ForestGreen] (e) at (-1,11) {};
	\vertex[color=ForestGreen] (f) at (-4,7) {};
	\vertex[color=ForestGreen] (g) at (-4,3) {};
	\draw[thick, color=ForestGreen] (a) -- (b) -- (c) -- (d) -- (e) -- (f) -- (g) -- (a);
	\draw[thick, color=ForestGreen] (e) --
    (b);
\end{scope}	

%indexes min element in Tamari lattice
\begin{scope}[scale=0.3, xshift=770, yshift=-40]
	\draw[fill, color=gray!33] (0,0) rectangle (2,1);
	\draw[fill, color=gray!33] (1,1) rectangle (4,2);
	\draw[fill, color=gray!33] (3,2) rectangle (5,3);
	\draw[very thin, color=gray!100] (0,0) grid (5,3);
	\draw[very thick, color=RawSienna] (0,0)--(0,3)--(5,3);
	\draw[very thick, color=RawSienna] (0,1)--(1,1);
	\draw[very thick, color=RawSienna] (0,2)--(3,2);
	
	\node[circle,color=black,fill=black, inner sep=1pt] (r) at (0,3) {};
	\node[circle,color=black,fill=black, inner sep=1pt] (r) at (0,2) {};
	\node[circle,color=black,fill=black, inner sep=1pt] (r) at (0,1) {};
	\node[circle,color=black,fill=black, inner sep=1pt] (r) at (0,0) {};
	\node[circle,color=black,fill=black, inner sep=1pt] (r) at (1,1) {};	
	\node[circle,color=black,fill=black, inner sep=1pt] (r) at (2,2) {};	
	\node[circle,color=black,fill=black, inner sep=1pt] (r) at (3,2) {};
	\node[circle,color=black,fill=black, inner sep=1pt] (r) at (4,3) {};	
	\node[circle,color=black,fill=black, inner sep=1pt] (r) at (5,3) {};	
\end{scope}
\begin{scope}[scale=0.3, xshift=850, yshift=155]
	\draw[fill, color=gray!33] (0,0) rectangle (2,1);
	\draw[fill, color=gray!33] (1,1) rectangle (4,2);
	\draw[fill, color=gray!33] (3,2) rectangle (5,3);
	\draw[very thin, color=gray!100] (0,0) grid (5,3);
	\draw[very thick, color=RawSienna] (0,0)--(0,3)--(5,3);
	\draw[very thick, color=RawSienna] (0,1)--(1,1);
	\draw[very thick, color=RawSienna] (2,2)--(3,2);
	\draw[very thick, color=RawSienna] (2,2)--(2,3);
	
	\node[circle,color=black,fill=black, inner sep=1pt] (r) at (0,3) {};
	\node[circle,color=black,fill=black, inner sep=1pt] (r) at (2,2) {};
	\node[circle,color=black,fill=black, inner sep=1pt] (r) at (0,1) {};
	\node[circle,color=black,fill=black, inner sep=1pt] (r) at (0,0) {};
	\node[circle,color=black,fill=black, inner sep=1pt] (r) at (1,1) {};	
	\node[circle,color=black,fill=black, inner sep=1pt] (r) at (2,3) {};	
	\node[circle,color=black,fill=black, inner sep=1pt] (r) at (3,2) {};
	\node[circle,color=black,fill=black, inner sep=1pt] (r) at (4,3) {};	
	\node[circle,color=black,fill=black, inner sep=1pt] (r) at (5,3) {};	
\end{scope}

\begin{scope}[scale=0.3, xshift=900, yshift=300]
	\draw[fill, color=gray!33] (0,0) rectangle (2,1);
	\draw[fill, color=gray!33] (1,1) rectangle (4,2);
	\draw[fill, color=gray!33] (3,2) rectangle (5,3);
	\draw[very thin, color=gray!100] (0,0) grid (5,3);
	\draw[very thick, color=RawSienna] (0,0)--(0,3)--(5,3);
	\draw[very thick, color=RawSienna] (0,1)--(1,1);
	\draw[very thick, color=RawSienna] (3,3)--(3,2);
	
	\node[circle,color=black,fill=black, inner sep=1pt] (r) at (0,3) {};
	\node[circle,color=black,fill=black, inner sep=1pt] (r) at (3,3) {};
	\node[circle,color=black,fill=black, inner sep=1pt] (r) at (0,1) {};
	\node[circle,color=black,fill=black, inner sep=1pt] (r) at (0,0) {};
	\node[circle,color=black,fill=black, inner sep=1pt] (r) at (1,1) {};	
	\node[circle,color=black,fill=black, inner sep=1pt] (r) at (2,3) {};	
	\node[circle,color=black,fill=black, inner sep=1pt] (r) at (3,2) {};
	\node[circle,color=black,fill=black, inner sep=1pt] (r) at (4,3) {};	
	\node[circle,color=black,fill=black, inner sep=1pt] (r) at (5,3) {};	
\end{scope}
%maximal element
\begin{scope}[scale=0.3, xshift=590, yshift=450]
	\draw[fill, color=gray!33] (0,0) rectangle (2,1);
	\draw[fill, color=gray!33] (1,1) rectangle (4,2);
	\draw[fill, color=gray!33] (3,2) rectangle (5,3);
	\draw[very thin, color=gray!100] (0,0) grid (5,3);
	\draw[very thick, color=RawSienna] (0,0)--(0,3)--(5,3);
	\draw[very thick, color=RawSienna] (1,3)--(1,1);
	\draw[very thick, color=RawSienna] (3,3)--(3,2);
	
	\node[circle,color=black,fill=black, inner sep=1pt] (r) at (0,3) {};
	\node[circle,color=black,fill=black, inner sep=1pt] (r) at (3,3) {};
	\node[circle,color=black,fill=black, inner sep=1pt] (r) at (1,3) {};
	\node[circle,color=black,fill=black, inner sep=1pt] (r) at (0,0) {};
	\node[circle,color=black,fill=black, inner sep=1pt] (r) at (1,1) {};	
	\node[circle,color=black,fill=black, inner sep=1pt] (r) at (2,3) {};	
	\node[circle,color=black,fill=black, inner sep=1pt] (r) at (3,2) {};
	\node[circle,color=black,fill=black, inner sep=1pt] (r) at (4,3) {};	
	\node[circle,color=black,fill=black, inner sep=1pt] (r) at (5,3) {};	
\end{scope}

\begin{scope}[scale=0.3, xshift=500, yshift=330]
	\draw[fill, color=gray!33] (0,0) rectangle (2,1);
	\draw[fill, color=gray!33] (1,1) rectangle (4,2);
	\draw[fill, color=gray!33] (3,2) rectangle (5,3);
	\draw[very thin, color=gray!100] (0,0) grid (5,3);
	\draw[very thick, color=RawSienna] (0,0)--(0,3)--(5,3);
	\draw[very thick, color=RawSienna] (1,3)--(1,1);
	\draw[very thick, color=RawSienna] (2,2)--(3,2);
    \draw[very thick, color=RawSienna] (2,2)--(2,3);

	\node[circle,color=black,fill=black, inner sep=1pt] (r) at (0,3) {};
	\node[circle,color=black,fill=black, inner sep=1pt] (r) at (2,2) {};
	\node[circle,color=black,fill=black, inner sep=1pt] (r) at (1,3) {};
	\node[circle,color=black,fill=black, inner sep=1pt] (r) at (0,0) {};
	\node[circle,color=black,fill=black, inner sep=1pt] (r) at (1,1) {};	
	\node[circle,color=black,fill=black, inner sep=1pt] (r) at (2,3) {};	
	\node[circle,color=black,fill=black, inner sep=1pt] (r) at (3,2) {};
	\node[circle,color=black,fill=black, inner sep=1pt] (r) at (4,3) {};	
	\node[circle,color=black,fill=black, inner sep=1pt] (r) at (5,3) {};	
\end{scope}

\begin{scope}[scale=0.3, xshift=420, yshift=200]
	\draw[fill, color=gray!33] (0,0) rectangle (2,1);
	\draw[fill, color=gray!33] (1,1) rectangle (4,2);
	\draw[fill, color=gray!33] (3,2) rectangle (5,3);
	\draw[very thin, color=gray!100] (0,0) grid (5,3);
	\draw[very thick, color=RawSienna] (0,0)--(0,3)--(5,3);
	\draw[very thick, color=RawSienna] (1,3)--(1,1);
	\draw[very thick, color=RawSienna] (2,2)--(3,2);
    \draw[very thick, color=RawSienna] (2,2)--(1,2);

	\node[circle,color=black,fill=black, inner sep=1pt] (r) at (0,3) {};
	\node[circle,color=black,fill=black, inner sep=1pt] (r) at (2,2) {};
	\node[circle,color=black,fill=black, inner sep=1pt] (r) at (1,3) {};
	\node[circle,color=black,fill=black, inner sep=1pt] (r) at (0,0) {};
	\node[circle,color=black,fill=black, inner sep=1pt] (r) at (1,1) {};	
	\node[circle,color=black,fill=black, inner sep=1pt] (r) at (1,2) {};	
	\node[circle,color=black,fill=black, inner sep=1pt] (r) at (3,2) {};
	\node[circle,color=black,fill=black, inner sep=1pt] (r) at (4,3) {};	
	\node[circle,color=black,fill=black, inner sep=1pt] (r) at (5,3) {};	
\end{scope}

\begin{scope}[scale=0.3, xshift=420, yshift=60]
	\draw[fill, color=gray!33] (0,0) rectangle (2,1);
	\draw[fill, color=gray!33] (1,1) rectangle (4,2);
	\draw[fill, color=gray!33] (3,2) rectangle (5,3);
	\draw[very thin, color=gray!100] (0,0) grid (5,3);
	\draw[very thick, color=RawSienna] (0,0)--(0,3)--(5,3);
	\draw[very thick, color=RawSienna] (1,2)--(1,1);
	\draw[very thick, color=RawSienna] (0,2)--(3,2);
    \draw[very thick, color=RawSienna] (2,2)--(1,2);

	\node[circle,color=black,fill=black, inner sep=1pt] (r) at (0,3) {};
	\node[circle,color=black,fill=black, inner sep=1pt] (r) at (2,2) {};
	\node[circle,color=black,fill=black, inner sep=1pt] (r) at (0,2) {};
	\node[circle,color=black,fill=black, inner sep=1pt] (r) at (0,0) {};
	\node[circle,color=black,fill=black, inner sep=1pt] (r) at (1,1) {};	
	\node[circle,color=black,fill=black, inner sep=1pt] (r) at (1,2) {};	
	\node[circle,color=black,fill=black, inner sep=1pt] (r) at (3,2) {};
	\node[circle,color=black,fill=black, inner sep=1pt] (r) at (4,3) {};	
	\node[circle,color=black,fill=black, inner sep=1pt] (r) at (5,3) {};	
\end{scope}
\end{tikzpicture}
\caption{The $\nu$-Tamari lattice indexed by $\nu$-Dyck paths (left),  $(I,\overline{J})$-trees (center), and $\nu$-trees (right) for $\nu=NENE^2NE^2$.}
\label{fig.35TamariLattice}
\end{figure}
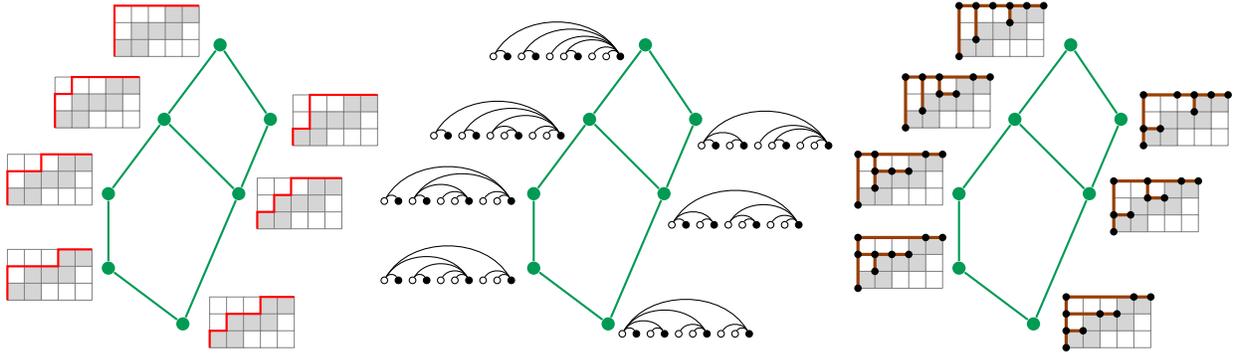
\subsubsection{The \texorpdfstring{$\nu$-}-Tamari lattice as the flip lattice on \texorpdfstring{$(I,\overline{J})$-}-trees}
\label{subsec:nu-Tamari-IJ}
Next, we consider the description of $\mathrm{Tam}(\nu)$ in terms of $(I,\overline{J})$-trees as introduced by Ceballos, Padrol and Sarmiento~\cite{CPS19}.

\begin{definition} Let $k\in \mathbb{Z}$ and let $I\sqcup \overline{J}$ be a bipartition of $[k]$ such that $1 \in I$ and $k \in \overline{J}$. An $(I,\overline{J})$-{\em tree} is a maximal subgraph of the complete bipartite graph $K_{|I|,|\overline{J}|}$ that is 
\begin{itemize}
    \item[(i)] {\em increasing}: each arc $(i,\overline{j})$ satisfies $i < \overline{j}$; and 
    \item[(ii)] {\em non-crossing}: the graph does not contain arcs $(i,\overline{j})$ and $(i',\overline{j}')$ with $i < i' < j < \overline{j}'$. 
\end{itemize}
\end{definition}

To a pair $(I,\overline{J})$ we can associate a unique lattice path $\nu$ as follows. 
Assign to the $i$-th element of $I$ the label $E_i$ and to the $j$-th element of $\overline{J}$ the label $N_j$. 
Reading the labels of the nodes $k=2,\ldots, k-1$ in increasing order yields a lattice path $\nu$ from $(0,0)$ to $(|I|-1,|\overline{J}|-1)$. 
See Figure~\ref{fig:threeTamariObjects} for an example. 
Conversely, a lattice path $\nu$ determines a unique pair $(I,\overline{J})$, and hence a unique set of $(I,\overline{J})$-trees. 
Let $\mathcal{T}_{\nu}$ denote the set of $(I,\overline{J})$-trees determined by $\nu$.

Given two $(I,\overline{J})$-trees $T$ and $T'$ in $\calT_\nu$, we say that $T'$ is an {\em increasing flip} of $T$ if $T'$ is obtained from $T$ by replacing an arc $(i,j)$ with an arc $(i',j')$, where $i < i'$ and $j<j'$. 
Define a relation on $\calT_\nu$ by $T \lessdot_{I,\overline{J}} T'$ whenever $T'$ is obtained from $T$ by an increasing flip. 
The transitive closure $<_{I,\overline{J}}$ of the relation $T\lessdot_{I,\overline{J}} T'$ gives a lattice structure on $\calT_\nu$ (see \cite[Lemma 3.1]{CPS19}). 

\begin{proposition}[{\cite[Theorem 3.4]{CPS19}}] 
The increasing flip lattice on the set of $(I,\overline{J})$-trees in $\calT_\nu$ is isomorphic to $\mathrm{Tam}(\nu)$. 
\end{proposition}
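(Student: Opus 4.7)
The plan is to produce an explicit bijection $\Psi\colon \calT_\nu \to \calD_\nu$ from $(I,\overline{J})$-trees to $\nu$-Dyck paths that carries the increasing flip covering relation $\lessdot_{I,\overline{J}}$ to the rotation covering relation $\lessdot_\nu$. Since a cover-preserving bijection between finite posets induces an isomorphism of Hasse diagrams, and since $\mathrm{Tam}(\nu)$ is already a lattice on $\calD_\nu$, such a $\Psi$ promotes the increasing-flip poset on $\calT_\nu$ to a lattice isomorphic to $\mathrm{Tam}(\nu)$.

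The construction of $\Psi$ would factor through the $\nu$-tree picture illustrated in the rightmost column of Figure~\ref{fig:threeTamariObjects}. Given $T \in \calT_\nu$, I would place each element of $I$ on a horizontal axis and each element of $\overline{J}$ on a vertical axis, sending each arc $(i,\overline{j})$ of $T$ to a lattice point weakly above $\nu$. The staircase path separating these nodes from the cells below them is the $\nu$-Dyck path $\Psi(T)$. The inverse construction reads off the lattice points strictly above a given $\nu$-Dyck path and reassembles them into the unique maximal non-crossing increasing bipartite graph on $I \sqcup \overline{J}$ whose arcs are those points; that the result is maximal and non-crossing is a direct consequence of the staircase shape of the boundary.

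The heart of the argument is showing that $\Psi$ converts increasing flips into rotations. Suppose $T \lessdot_{I,\overline{J}} T'$ is the flip replacing an arc $(i,\overline{j})$ of $T$ with an arc $(i',\overline{j}')$ of $T'$, where $i<i'$ and $\overline{j}<\overline{j}'$. Maximality and non-crossingness force both arcs $(i,\overline{j}')$ and $(i',\overline{j})$ to belong to $T$ and $T'$, so under $\Psi$ only a single rectangle of the $\nu$-tree is altered. I would translate this local change into a modification of the staircase boundary: an east step sitting immediately before a valley $p$ of $\Psi(T)$ is exchanged with the sub-path $\Psi(T)_{[p,q]}$, where $q$ is the first subsequent lattice point on $\Psi(T)$ satisfying $\horiz_\nu(q) = \horiz_\nu(p)$. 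This is precisely a rotation of $\Psi(T)$ at $p$. The main obstacle is this translation, which requires carefully matching the indices $(i,i',\overline{j},\overline{j}')$ of the flip with the valley $p$ and with the $\horiz_\nu$ statistic governing the length of the rotated sub-path. Once this is established, the converse---that every rotation lifts to an increasing flip---follows by the same local analysis, completing the proof.
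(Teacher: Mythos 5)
First, a remark on the comparison you asked for: the paper offers no proof of this proposition --- it is imported verbatim from Ceballos--Padrol--Sarmiento \cite[Theorem 3.4]{CPS19} --- so a complete argument here would be supplying what the paper only cites. Your overall strategy, a bijection $\calT_\nu\to\calD_\nu$ carrying increasing flips to rotations, is indeed how the result is established in the literature, and your observation that a cover-preserving bijection in both directions suffices is correct.

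However, the proposal has a genuine gap, and it sits exactly where the content of the theorem lives. The map $\Psi$ you describe is not well defined, and under its natural reading it is wrong. The grid representation of an $(I,\overline{J})$-tree is a $\nu$-tree: a maximal set of $\nu$-compatible points forming a rooted binary tree, not a staircase. A column may contain several points and one point may lie strictly northwest of another (the tree in Figure~\ref{fig:threeTamariObjects} contains both $(0,0)$ and $(0,3)$ in column $0$, and $(0,3)$ sits northwest of $(1,1)$), so there is no ``staircase path separating these nodes from the cells below them.'' If one instead takes the highest monotone path lying weakly below every node, that example yields $\nu$ itself, whereas the $\nu$-Dyck path actually associated to that tree is $N^2E^2NE^3$ (Figure~\ref{fig:threeTamariObjects}, left). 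The correct correspondence is the right-flushing bijection of \cite{CPS20}: if $r_y$ denotes the number of tree nodes in row $y$, the associated $\nu$-Dyck path has $r_y-1$ east steps at height $y$; this is a global row-counting operation, not a separating boundary. Because $\Psi$ is misidentified, the ``translation'' of a flip (replacing the southwest corner of a rectangle by its northeast corner) into a rotation at a valley cannot proceed as sketched; and even with the correct $\Psi$, that translation --- matching the flipped rectangle to the valley $p$ and to the $\horiz_\nu$ statistic governing the rotated subpath --- is precisely the assertion to be proved, and it is the one step your plan explicitly defers. As written, the proposal is a correct reduction plus an incorrect candidate bijection, not a proof.
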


The lattice in the center of Figure~\ref{fig.35TamariLattice} gives a $\nu$-Tamari lattice with vertices indexed by $(I,\overline{J})$-trees. The following corollary is then immediate.

\begin{corollary} \label{cor:nuTamariHasseDiag}
The Hasse diagram of the $\nu$-Tamari lattice is the graph whose vertices are the $(I,\overline{J})$-trees determined by $\nu$, with edges between $(I,\overline{J})$-trees that differ by exactly one arc.
\end{corollary}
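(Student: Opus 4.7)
The plan is to derive the corollary directly from the preceding proposition, which establishes a poset isomorphism between $\mathrm{Tam}(\nu)$ and $(\calT_\nu, <_{I,\overline J})$. Since the Hasse diagram of a poset is determined by the poset, it suffices to show that the Hasse diagram of $(\calT_\nu, <_{I,\overline J})$, viewed as an undirected graph, has vertex set $\calT_\nu$ with edges joining $(I,\overline J)$-trees that differ by exactly one arc.

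The forward direction is immediate from the definition of $\lessdot_{I,\overline J}$. If $T \lessdot_{I,\overline J} T'$, then $T'$ is an increasing flip of $T$, meaning $T'$ is obtained from $T$ by replacing a single arc $(i,j)$ with $(i',j')$ satisfying $i<i'$ and $j<j'$. Thus every covering pair, and hence every edge in the Hasse diagram, joins two $(I,\overline J)$-trees that differ by exactly one arc.

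For the reverse direction, I would show that if $T, T' \in \calT_\nu$ differ in exactly one arc, say $T \setminus T' = \{(i,j)\}$ and $T' \setminus T = \{(i',j')\}$, then either $T \lessdot_{I,\overline J} T'$ or $T' \lessdot_{I,\overline J} T$. Since both trees are maximal non-crossing increasing subgraphs of $K_{|I|,|\overline J|}$, they have the same number of arcs, so the swap is well-defined. The key verification is the comparability claim: one cannot have $i<i'$ and $j\geq j'$ (nor the symmetric case with the roles of the trees swapped). Indeed, the equalities $i=i'$ or $j=j'$ are ruled out because otherwise one of the two trees would contain a pair of arcs that, combined with the common ambient tree structure, violates maximality or creates a crossing with the removed arc. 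The anti-comparable case $i<i'$, $j>j'$ is ruled out by the non-crossing condition applied to $(i,j)$ and $(i',j')$ against arcs shared by $T$ and $T'$. A short case analysis, which amounts to observing that the arcs of $T$ and $T'$ arrange around the bipartite vertex line $[k]$ in the forced comparable way, completes this step.

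The main obstacle is the comparability verification in the reverse direction, but this is a routine exercise in the geometry of $(I,\overline J)$-trees and is already implicit in the work of Ceballos--Padrol--Sarmiento on increasing flips. Once this is established, combining the two directions gives the desired description of the Hasse diagram, and the corollary follows.
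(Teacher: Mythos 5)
Your proposal is correct and follows essentially the same route as the paper, which simply declares the corollary immediate from the preceding proposition (the increasing-flip lattice of Ceballos--Padrol--Sarmiento); your reverse direction supplies the one piece of content the paper leaves unstated. For the comparability claim, the cleanest argument is that the two differing arcs must cross (otherwise $T\cup\{(i',\overline{j}')\}$ would be a strictly larger increasing non-crossing subgraph, contradicting maximality of $T$), and crossing arcs satisfy $i<i'$ and $\overline{j}<\overline{j}'$ up to swapping the roles of $T$ and $T'$, which is exactly an increasing flip.
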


\subsubsection{The \texorpdfstring{$\nu$-}-Tamari lattice as the rotation lattice on \texorpdfstring{$\nu$-}-trees}
\label{subsec:nu-Tamari-nu-tree}
Given a lattice path $\nu$ from $(0,0)$ to $(b,a)$, let $\calL_\nu$ denote the set of lattice points in the plane which lie weakly above $\nu$ inside the rectangle defined by $(0,0)$ and $(b,a)$. 
An $(I,\overline{J})$-tree $T \in \calT_\nu$ can be represented as a point configuration in $\calL_\nu$. 
For each arc $(E_x,N_y)$ in $T$, we associate the point $(x,y)$ in $\calL_\nu$. 
The collection of points corresponding to the arcs of $T$ is called the {\em grid representation} of $T$. 
These grid representations were studied in detail in \cite{CPS20} under the name $\nu$-trees (see \cite[Remark 3.7]{CPS20}). 
The word `tree' is justified by the fact that each point except $(0,a)$ in a grid representation has either one point above it in the same column or one point to its left in the same row, but not both \cite[Lemma 2.2]{CPS20}. 
Thus we can connect each point to the point above it or to its left, forming a rooted binary tree with a root at $(0,a)$.  
Figure~\ref{fig:threeTamariObjects} (right) gives an example of a $\nu$-tree, which is the grid representation of the $(I,\overline{J})$-tree in the center. 
The non-crossing condition for arcs in an $(I,\overline{J})$-tree can be translated to $\nu$-trees and a $\nu$-tree can then be defined without reference to an $(I,\overline{J})$-tree as follows.
\begin{definition}
Two lattice points $p$ and $q$ in $\calL_\nu$ are said to be {\em $\nu$-incompatible} if $p$ is southwest or northeast of $q$, and the smallest rectangle containing $p$ and $q$ contains only lattice points of $\calL_\nu$.
The points $p$ and $q$ are {\em $\nu$-compatible} if they are not $\nu$-incompatible. 
A \emph{$\nu$-tree} is a maximal set of $\nu$-compatible points in $\calL_\nu$. 
\end{definition}

If a $\nu$-tree has points $p$, $q$ and $r$ such that $r$ is the southwest corner of the rectangle determined by $p$ and $q$ (with $p$ northwest of $q$ or vice versa), then replacing $r$ with the lattice point at the point at the northeast corner of the rectangle is called a {\em (right) rotation}. 
For example, in Figure~\ref{fig:threeTamariObjects}, the only possible rotation in the $\nu$-tree replaces the lattice point $(1,2)$ with $(2,3)$. 
Rotations in a $\nu$-tree are a direct translation of increasing flips for $(I,\overline{J})$-trees. 
Define a partial order  $<_{\nu}$ on the set of $\nu$-trees given by a covering relation $T \lessdot_\nu T'$ if and only if $T'$ is formed from $T$ by a rotation. 
This partial order is the {\em rotation lattice} of $\nu$-trees \cite[Theorem 2.8]{CPS20}. 
The rightmost lattice in Figure~\ref{fig.35TamariLattice} shows the $\nu$-Tamari lattice indexed with $\nu$-trees.
\begin{proposition}[{\cite[Theorem 3.3]{CPS20}}] The rotation lattice on the set of $\nu$-trees is isomorphic to $\mathrm{Tam}(\nu)$. 
\end{proposition}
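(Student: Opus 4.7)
The plan is to leverage the grid representation, which sends each $(I,\overline{J})$-tree $T \in \calT_\nu$ to a set of lattice points $\Phi(T) \subseteq \calL_\nu$, and to reduce the statement to the preceding proposition identifying the increasing flip lattice on $\calT_\nu$ with $\mathrm{Tam}(\nu)$. It then suffices to produce a poset isomorphism between $(\calT_\nu, <_{I,\overline{J}})$ and the rotation poset on $\nu$-trees.

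First I would verify that $\Phi$ is a well-defined bijection between $\calT_\nu$ and the set of $\nu$-trees. For an arc $(i,\overline{j}) \in T$ with labels $E_x, N_y$, reading off $\nu$ from the labels shows that the associated point $(x,y)$ lies weakly above $\nu$, so $\Phi(T) \subseteq \calL_\nu$. The key combinatorial translation is that two arcs whose corresponding points $(x_1,y_1)$ and $(x_2,y_2)$ satisfy $x_1 < x_2$ and $y_1 < y_2$ are crossing (in the sense forbidden by the definition of an $(I,\overline{J})$-tree) if and only if $E_{x_2}$ occurs before $N_{y_1}$ along the labeled line, which is exactly the condition that the closed rectangle spanned by the two points lies weakly above $\nu$, i.e., that the points are $\nu$-incompatible. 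Hence non-crossing translates into pairwise $\nu$-compatibility, and maximality is preserved because both types of objects are defined by maximality of their respective compatibility conditions.

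Next I would check that $\Phi$ sends increasing flips to rotations. An increasing flip at the arc corresponding to the point $(x,y)$ produces a new arc corresponding to $(x',y')$ with $x < x'$ and $y < y'$. Maximality of $T$ implies that $\Phi(T)$ already contains points at the NW corner $(x,y')$ and the SE corner $(x',y)$ of the rectangle spanned by $(x,y)$ and $(x',y')$ — otherwise the $\nu$-tree $\Phi(T)$ could be enlarged by adjoining one of those corners while remaining pairwise $\nu$-compatible, contradicting maximality. Hence $(x,y)$ is the SW corner of a rectangle whose other three corners lie in $\Phi(T)$, and replacing it by $(x',y')$ is precisely a rotation in the sense of the preceding definition. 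The inverse direction is analogous, so $\Phi$ induces a bijection of covering relations and thus an isomorphism of lattices, which combined with the preceding proposition yields the result.

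The hard part will be the local verification that the flip/rotation correspondence really matches up — in particular that the two witness corner points $(x,y')$ and $(x',y)$ always belong to a maximal $(I,\overline{J})$-tree around an increasing-flippable arc, and that conversely every rotation-flippable triple in a $\nu$-tree arises this way. This relies on a careful interplay between the non-crossing structure and maximality, but once this local picture is pinned down the lattice isomorphism follows formally from the bijection $\Phi$.
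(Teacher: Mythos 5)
Your proposal is correct and is essentially the argument the paper intends: the paper does not prove this proposition itself but cites \cite[Theorem 3.3]{CPS20}, and its surrounding discussion (the grid representation $\calA_\nu \to \calL_\nu$, the translation of non-crossing into $\nu$-compatibility, and the remark that ``rotations in a $\nu$-tree are a direct translation of increasing flips'') is exactly the route you take. The one step you flag as delicate --- that maximality forces the NW and SE corner points $(x,y')$ and $(x',y)$ to already lie in $\Phi(T)$ --- does go through, since any point of $\Phi(T)$ incompatible with such a corner would already be incompatible with $(x,y)$ or with $(x',y')$, so your reduction to the increasing-flip lattice is sound.
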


\subsection{The triangulation}

In this subsection we study the {\em length-framed} triangulation of $\calF_{\car(\nu)}$ and show its connection with $\mathrm{Tam}(\nu)$. 
To define the length framing of $\car(\nu)$, or any framing for that matter, we need to be able to distinguish between the multiedges. To that end, we label multiedges between two vertices (from top to bottom in a planar drawing of $\car(\nu)$) with increasing natural numbers (see Figure \ref{fig.two_framings_ex}).

\begin{definition}\label{def.lengthframing}
Let $G$ be a graph on the vertex set $\{1,\ldots, n+1\}$.
Define the {\em length} of a directed edge $(i,j)$ to be $j-i$. Given an inner vertex $i\in[2,n]$ of $G$, the {\em length framing} for $G$ at $i$ is the pair of linear orders $(\prec_{\inedge(i)}, \prec_{\outedge(i)})$ where longer edges precede shorter edges and multiedges with smaller labels precede ones with larger labels. 
Figure~\ref{fig.two_framings_ex} gives an example of the length framing of $\car(\nu)$ with $\nu=NE^2NENNE^3NE$.
\end{definition}

Recall from Section \ref{sec.intro} that the vertices of $\calF_{\car(\nu)}$ are determined by routes (unitary flows) in $\car(\nu)$. These are completely characterized by two edges in $\car(\nu)$: the initial edge that is of the form $(1,j+1)$ with label $i$, and the terminal edge that is of the form $(\ell+1,n+1)$ (which always has label $i=1$) with $1 \le j\leq \ell < n$. 
We denote such a route by $R_{j,i,\ell}$. 

\begin{lemma}\label{lem:RouteArcBijection}
The set of routes $\calR_\nu$ in the $\nu$-caracol graph $\car(\nu)$ is in bijection with the set $\mathcal{A}_{\nu}$ of possible arcs in the $(I,\overline{J})$-trees in $\mathcal{T}_{\nu}$. 
\end{lemma}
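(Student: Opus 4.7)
The plan is to define an explicit bijection $\phi : \calR_\nu \to \calA_\nu$ in which the initial and terminal edges of a route record an element of $I$ and an element of $\overline{J}$ respectively.

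First I would observe that a route $R = R_{j,i,\ell}$ in $\car(\nu)$ is determined by its initial edge $(1,j+1)$ (with multiedge label $i \in [\nu_{j-1}]$, using the convention $\nu_0 := 1$) and its terminal edge $(\ell+1, a+3)$, since the middle portion is forced to be the monotone chain on the vertices $j+1, j+2, \ldots, \ell+1$; the only constraint is $1 \le j \le \ell \le a+1$. On the other side, recall from the labeling of $\nu$ described before the lemma that $I = \{E_0, E_1, \ldots, E_b\}$ lists the starting node together with the $E$-steps of $\nu$ read in order, while $\overline{J} = \{N_0, N_1, \ldots, N_a\}$ lists the $N$-steps of $\nu$ together with the terminal node.

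The map $\phi$ is then built out of two coordinate-wise bijections. To the initial edge $(1, j+1)$ with label $i$ I associate the element $E_x \in I$ given by $x = 0$ when $j = 1$, and $x = \nu_1 + \cdots + \nu_{j-2} + i$ otherwise (that is, the $i$-th $E$-step inside the block $E^{\nu_{j-1}}$ of $\nu$). To the terminal edge $(\ell+1, a+3)$ I associate $N_y \in \overline{J}$ with $y = \ell - 1$. Each assignment is clearly bijective, since there are exactly $|I| = b+1$ choices of initial edge (counted with multiplicity) and $|\overline{J}| = a+1$ choices of terminal edge.

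The main step is to check that the route constraint $j \le \ell$ matches the arc constraint that $E_x$ precedes $N_y$ in the linear order on $[a+b+2]$. For this one reads off positions along $\nu$: $\mathrm{pos}(E_x) = x + k + 1$, where $k$ is the block containing the $x$-th $E$-step (or $k = 0$ if $x = 0$), and $\mathrm{pos}(N_y) = y + \nu_1 + \cdots + \nu_y + 2$. A direct comparison shows that $\mathrm{pos}(E_x) < \mathrm{pos}(N_y)$ holds exactly when the block of $E_x$ is at most $y$, which in our parametrization is the inequality $j - 1 \le \ell - 1$. Equivalently, this is the condition $(x,y) \in \calL_\nu$, i.e.\ the lattice point lies weakly above $\nu$. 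The main obstacle is keeping the indexing straight across routes, arc labels $(E_x, N_y)$, and grid points $(x,y)$, and handling the boundary cases $j = 1$ (initial edge $(1,2)$ mapping to $E_0$) and $\ell = a+1$ (terminal edge mapping to $N_a$). Once the positional comparison is verified, injectivity is immediate from the coordinate bijections and surjectivity follows by running the formulas for $j$ and $i$ in reverse from any $(E_x, N_y) \in \calA_\nu$.
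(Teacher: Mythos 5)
Your proof is correct and follows essentially the same strategy as the paper's: parametrize a route by its initial edge $(1,j+1)$ (with multiplicity label $i$) and terminal edge $(\ell+1,a+3)$, parametrize an arc by its $E$-endpoint and $N$-endpoint, match the two coordinate-wise, and check that the route condition $j\le \ell$ translates into the increasing condition on arcs; your positional bookkeeping for $\mathrm{pos}(E_x)$ and $\mathrm{pos}(N_y)$ is accurate. The paper does the same thing but with the double-index labels $E_{j,i}$ (the $E$ step lying $i$ steps before $N_j$ in $\overline{\nu}$), which makes the correspondence $R_{j,i,\ell}\mapsto (E_{j,i},N_\ell)$ notationally immediate. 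One caveat worth flagging: within a block $E^{\nu_{j-1}}$ your map sends the multiedge with label $i$ to the \emph{$i$-th} $E$ step of the block, whereas the paper's $\varphi$ sends it to the $E$ step that is $i$ steps \emph{before} $N_j$, i.e.\ the order within each block is reversed; this is harmless for the present lemma (both are bijections), but the paper's choice is the one needed for Lemma~\ref{lem:coherent}, since with your convention two routes $R_{j,i,\ell}$, $R_{j,i',\ell'}$ with $i'<i$ and $\ell<\ell'$ are incoherent under the length framing yet would map to nested (non-crossing) arcs.
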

\begin{proof}
We define a map $\varphi: \mathcal{R}_\nu\rightarrow \mathcal{A}_\nu$.
The elements in the sets $I$ and $\overline{J}$  
respectively correspond to the $N$ and $E$ steps in the path $\overline{\nu}=E\nu N$, as in Figure \ref{fig:threeTamariObjects}.
Describing the bijection in terms of the $N$ and $E$ steps is easier than using the elements of $I$ and $\overline{J}$, so we add indices to the $N$ and $E$ steps in order to distinguish between them. 
First index the $j$-th  left-to-right $N$ step in $\overline{\nu}$ by $j$, then index each $E$ with a pair $(j,i)$ where $j$ is index of the next $N_j$ in $\overline{\nu}$, and $i$ is the number of steps taken to reach $N_j$. 
Now, arcs in the $(I,\overline{J})$-trees in $\mathcal{T}_{\nu}$ can be expressed as pairs of the form $(E_{j,i},N_\ell)$.
Recall that the routes in $\car(\nu)$ are of the form $R_{j,i,\ell}$. We define the map $\varphi$ by $\varphi(R_{j,i,\ell})=(E_{j,i},N_{\ell})$. Figure~\ref{fig.length_clique} shows an example of this correspondence between routes and arcs. 

For a route $R_{j,i,\ell}$, we have that $1\leq j <n$, and $i\leq \inedge_{j+1}$, and hence $E_{j,i}$ is the label for a node in an $(I,\overline{J})$-tree in $\mathcal{T}_\nu$. 
Since $1\leq \ell <n$, we also have that $N_\ell$ is the label of a node. 
Now $(E_{j,i},N_\ell)$ is a valid arc in $\mathcal{A}_\nu$ since $j \leq \ell$, and the map $\varphi$ is thus well-defined. 
Define the inverse map $\varphi\inv :\mathcal{A}_\nu \to \mathcal{R}_\nu$ by $\varphi\inv((E_{j,i},N_\ell)) = R_{j,i,\ell}$. 
Since $(E_{j,i},N_\ell)$ is an arc, the node $N_{j+1}$ is preceded by at least $i$ many nodes labeled by $E$ steps. 
Thus $\car(\nu)$ has an edge $(1,j+1)$ labeled $i$. Now $R_{j,i,\ell}$ is the unique route in $\mathcal{R}_\nu$ determined by $j,i$ and $\ell$, and so $\varphi\inv$ is well-defined. 
It is clear that $\varphi\circ \varphi\inv$ and $\varphi\inv\circ \varphi$ are identity maps.
\end{proof}

\begin{figure}[ht!]
\centering
\begin{tikzpicture}
%route 1
\begin{scope}[xshift=0, yshift=0, scale=.33]
    \node[]  at (5.5,1.5)  {\tiny \textcolor{cyan}{$1$}};
    \cvx[color=black](a1) at (1,0) {}; \cvx[color=black](a2) at (2,0) {}; 
	\cvx[color=black](a3) at (3,0) {}; \cvx[color=black](a4) at (4,0) {}; 
	\cvx[color=black](a5) at (5,0) {}; \cvx[color=black](a6) at (6,0) {};
	%straight line	
	\draw[color=black, thick] (a1)--(a6);
	%top arcs			
	\draw[color=black!25] (a1) .. controls (1.25,.4) and (2.75,.4) .. (a3);	
    \draw[color=black!25] (a1) .. controls (1.25,0.8) and (3.75,0.8) .. (a4);
    \draw[color=black!25] (a1) .. controls (1.25,1.25) and (3.75,1.25) .. (a4);    
	\draw[color=black!25] (a1) .. controls (1.25, 1.75) and (4.75,1.75) .. (a5);
	\draw[color=black!25] (a1) .. controls (1.25, 2.25) and (4.75,2.25) .. (a5);
	%bottom arcs
	\draw[color=black!25] (a4) to[out=50,in=130] (a6);
	\draw[color=black!25] (a3) to[out=50,in=130] (a6);
	\draw[color=black!25] (a2) to[out=50,in=130] (a6);
\end{scope}
%route 2
\begin{scope}[xshift=60, yshift=0, scale=.33]
	\node[]  at (5.5,1.5)  {\tiny \textcolor{cyan}{$2$}};
	\cvx[color=black](a1) at (1,0) {}; 
	\cvx[color=black](a2) at (2,0) {}; 
	\cvx[color=black](a3) at (3,0) {}; 
	\cvx[color=black](a4) at (4,0) {}; 
	\cvx[color=black!25](a5) at (5,0) {};
	\cvx[color=black](a6) at (6,0) {};
	%straight line
	\draw[color=black!25] (a4)--(a6);
	%top arcs			
	\draw[color=black!25] (a1) .. controls (1.25,.4) and (2.75,.4) .. (a3);	
    \draw[color=black!25] (a1) .. controls (1.25,0.8) and (3.75,0.8) .. (a4);
    \draw[color=black!25] (a1) .. controls (1.25,1.25) and (3.75,1.25) .. (a4);    
	\draw[color=black!25] (a1) .. controls (1.25, 1.75) and (4.75,1.75) .. (a5);
	\draw[color=black!25] (a1) .. controls (1.25, 2.25) and (4.75,2.25) .. (a5);
	%bottom arcs
	\draw[color=black!25] (a3) to[out=50,in=130] (a6);
	\draw[color=black!25] (a2) to[out=50,in=130] (a6);
	%route
	\draw[color=black, thick] (a4) to[out=50,in=130] (a6);
	\draw[color=black, thick] (a1)--(a4);
\end{scope}
%route 3
\begin{scope}[xshift=120, yshift=0, scale=.33]
	\node[]  at (5.5,1.5)  {\tiny \textcolor{cyan}{$3$}};
	\cvx[color=black](a1) at (1,0) {}; 
	\cvx[color=black](a2) at (2,0) {}; 
	\cvx[color=black!25](a3) at (3,0) {}; 
	\cvx[color=black!25](a4) at (4,0) {}; 
	\cvx[color=black!25](a5) at (5,0) {};
	\cvx[color=black](a6) at (6,0) {};
	%straight line	
	\draw[color=black!25] (a2)--(a6);
	%top arcs			
	\draw[color=black!25] (a1) .. controls (1.25,.4) and (2.75,.4) .. (a3);	
    \draw[color=black!25] (a1) .. controls (1.25,0.8) and (3.75,0.8) .. (a4);
    \draw[color=black!25] (a1) .. controls (1.25,1.25) and (3.75,1.25) .. (a4);    
	\draw[color=black!25] (a1) .. controls (1.25, 1.75) and (4.75,1.75) .. (a5);
	\draw[color=black!25] (a1) .. controls (1.25, 2.25) and (4.75,2.25) .. (a5);
	%bottom arcs
	\draw[color=black!25] (a4) to[out=50,in=130] (a6);
	\draw[color=black!25] (a3) to[out=50,in=130] (a6);
	\draw[color=black!25] (a2) to[out=50,in=130] (a6);
    %route
    \draw[color=black, thick] (a1)--(a2);
	\draw[color=black, thick] (a2) to[out=50,in=130] (a6);
\end{scope}
%route 4
\begin{scope}[xshift=180, yshift=0, scale=.33]
	\node[]  at (5.5,1.5)  {\tiny \textcolor{cyan}{$4$}};
	\cvx[color=black](a1) at (1,0) {}; 
	\cvx[color=black!25](a2) at (2,0) {}; 
	\cvx[color=black](a3) at (3,0) {}; 
	\cvx[color=black](a4) at (4,0) {}; 
	\cvx[color=black!25](a5) at (5,0) {};
	\cvx[color=black](a6) at (6,0) {};
	%straight line	
	\draw[color=black!25] (a1)--(a3);
	\draw[color=black!25] (a4)--(a6);
	%top arcs			
    \draw[color=black!25] (a1) .. controls (1.25,0.8) and (3.75,0.8) .. (a4);
    \draw[color=black!25] (a1) .. controls (1.25,1.25) and (3.75,1.25) .. (a4);    
	\draw[color=black!25] (a1) .. controls (1.25, 1.75) and (4.75,1.75) .. (a5);
	\draw[color=black!25] (a1) .. controls (1.25, 2.25) and (4.75,2.25) .. (a5);
	%bottom arcs
	\draw[color=black!25] (a4) to[out=50,in=130] (a6);
	\draw[color=black!25] (a3) to[out=50,in=130] (a6);
	\draw[color=black!25] (a2) to[out=50,in=130] (a6);
	%route
	\draw[color=black, thick] (a1) .. controls (1.25,.4) and (2.75,.4) .. (a3);
	\draw[color=black, thick] (a3)--(a4);
	\draw[color=black, thick] (a4) to[out=50,in=130] (a6);
\end{scope}
%route 5
\begin{scope}[xshift=240, yshift=0, scale=.33]
	\node[]  at (5.5,1.5)  {\tiny \textcolor{cyan}{$5$}};
	\cvx[color=black](a1) at (1,0) {}; 
	\cvx[color=black!25](a2) at (2,0) {}; 
	\cvx[color=black](a3) at (3,0) {}; 
	\cvx[color=black!25](a4) at (4,0) {}; 
	\cvx[color=black!25](a5) at (5,0) {};
	\cvx[color=black](a6) at (6,0) {};
	%straight line	
	\draw[color=black!25] (a1)--(a3);
	\draw[color=black!25] (a3)--(a6);
	%top arcs			
    \draw[color=black!25] (a1) .. controls (1.25,0.8) and (3.75,0.8) .. (a4);
    \draw[color=black!25] (a1) .. controls (1.25,1.25) and (3.75,1.25) .. (a4);    
	\draw[color=black!25] (a1) .. controls (1.25, 1.75) and (4.75,1.75) .. (a5);
	\draw[color=black!25] (a1) .. controls (1.25, 2.25) and (4.75,2.25) .. (a5);
	%bottom arcs
	\draw[color=black!25] (a4) to[out=50,in=130] (a6);
	\draw[color=black!25] (a2) to[out=50,in=130] (a6);
	%route
    \draw[color=black, thick] (a1) .. controls (1.25,.4) and (2.75,.4) .. (a3);	
	\draw[color=black, thick] (a3) to[out=50,in=130] (a6);
\end{scope}
%route 6
\begin{scope}[xshift=30, yshift=-35, scale=.33]
	\node[]  at (5.5,1.5)  {\tiny \textcolor{cyan}{$6$}};
	\cvx[color=black](a1) at (1,0) {}; 
	\cvx[color=black!25](a2) at (2,0) {}; 
	\cvx[color=black!25](a3) at (3,0) {}; 
	\cvx[color=black](a4) at (4,0) {}; 
	\cvx[color=black!25](a5) at (5,0) {};
	\cvx[color=black](a6) at (6,0) {};
	%straight line	
	\draw[color=black!25] (a1)--(a4); \draw[color=black!25] (a4)--(a6);
	%top arcs			
	\draw[color=black!25] (a1) .. controls (1.25,.4) and (2.75,.4) .. (a3);	
    \draw[color=black!25] (a1) .. controls (1.25,1.25) and (3.75,1.25) .. (a4);
	\draw[color=black!25] (a1) .. controls (1.25, 1.75) and (4.75,1.75) .. (a5);
	\draw[color=black!25] (a1) .. controls (1.25, 2.25) and (4.75,2.25) .. (a5);
	%bottom arcs
	\draw[color=black!25] (a4) to[out=50,in=130] (a6);
	\draw[color=black!25] (a3) to[out=50,in=130] (a6);
	\draw[color=black!25] (a2) to[out=50,in=130] (a6);
    %route
    \draw[color=black, thick] (a1) .. controls (1.25,0.8) and (3.75,0.8) .. (a4);
	\draw[color=black, thick] (a4) to[out=50,in=130] (a6);
\end{scope}
%route 7
\begin{scope}[xshift=90, yshift=-35, scale=.33]
	\node[]  at (5.5,1.5)  {\tiny \textcolor{cyan}{$7$}};
	\cvx[color=black](a1) at (1,0) {}; 
	\cvx[color=black!25](a2) at (2,0) {}; 
	\cvx[color=black!25](a3) at (3,0) {}; 
	\cvx[color=black](a4) at (4,0) {}; 
	\cvx[color=black!25](a5) at (5,0) {};
	\cvx[color=black](a6) at (6,0) {};
	%straight line	
	\draw[color=black!25] (a1)--(a4);
	\draw[color=black!25] (a4)--(a6);
	%top arcs			
	\draw[color=black!25] (a1) .. controls (1.25,.4) and (2.75,.4) .. (a3);	
    \draw[color=black!25] (a1) .. controls (1.25,0.8) and (3.75,0.8) .. (a4);
    
	\draw[color=black!25] (a1) .. controls (1.25, 1.75) and (4.75,1.75) .. (a5);
	\draw[color=black!25] (a1) .. controls (1.25, 2.25) and (4.75,2.25) .. (a5);
	%bottom arcs
	\draw[color=black!25] (a3) to[out=50,in=130] (a6);
	\draw[color=black!25] (a2) to[out=50,in=130] (a6);
    %route
	\draw[color=black, thick] (a4) to[out=50,in=130] (a6);    
    \draw[color=black, thick] (a1) .. controls (1.25,1.25) and (3.75,1.25) .. (a4);
\end{scope}
%route 8
\begin{scope}[xshift=150, yshift=-35, scale=.33]
	\node[]  at (5.5,1.5)  {\tiny \textcolor{cyan}{$8$}};
	\cvx[color=black](a1) at (1,0) {}; 
	\cvx[color=black!25](a2) at (2,0) {}; 
	\cvx[color=black!25](a3) at (3,0) {}; 
	\cvx[color=black!25](a4) at (4,0) {}; 
	\cvx[color=black](a5) at (5,0) {};
	\cvx[color=black](a6) at (6,0) {};
	%straight line	
	\draw[color=black!25] (a1)--(a5);
	%top arcs			
	\draw[color=black!25] (a1) .. controls (1.25,.4) and (2.75,.4) .. (a3);	
    \draw[color=black!25] (a1) .. controls (1.25,0.8) and (3.75,0.8) .. (a4);
    \draw[color=black!25] (a1) .. controls (1.25,1.25) and (3.75,1.25) .. (a4);    
    \draw[color=black!25] (a1) .. controls (1.25, 2.25) and (4.75,2.25) .. (a5);
	%bottom arcs
	\draw[color=black!25] (a4) to[out=50,in=130] (a6);
	\draw[color=black!25] (a3) to[out=50,in=130] (a6);
	\draw[color=black!25] (a2) to[out=50,in=130] (a6);
    %route
	\draw[color=black, thick] (a1) .. controls (1.25, 1.75) and (4.75,1.75) .. (a5);
	\draw[color=black, thick] (a5)--(a6);
\end{scope}
%route 9
\begin{scope}[xshift=210, yshift=-35, scale=.33]
	\node[]  at (5.5,1.5)  {\tiny \textcolor{cyan}{$9$}};
	\cvx[color=black](a1) at (1,0) {}; 
	\cvx[color=black!25](a2) at (2,0) {}; 
	\cvx[color=black!25](a3) at (3,0) {}; 
	\cvx[color=black!25](a4) at (4,0) {}; 
	\cvx[color=black](a5) at (5,0) {};
	\cvx[color=black](a6) at (6,0) {};
	%straight line	
	\draw[color=black!25] (a1)--(a5);
	%top arcs			
	\draw[color=black!25] (a1) .. controls (1.25,.4) and (2.75,.4) .. (a3);	
    \draw[color=black!25] (a1) .. controls (1.25,0.8) and (3.75,0.8) .. (a4);
    \draw[color=black!25] (a1) .. controls (1.25,1.25) and (3.75,1.25) .. (a4);    
	\draw[color=black!25] (a1) .. controls (1.25, 1.75) and (4.75,1.75) .. (a5);
	%bottom arcs
	\draw[color=black!25] (a4) to[out=50,in=130] (a6);
	\draw[color=black!25] (a3) to[out=50,in=130] (a6);
	\draw[color=black!25] (a2) to[out=50,in=130] (a6);
    %route
	\draw[color=black, thick] (a1) .. controls (1.25, 2.25) and (4.75,2.25) .. (a5);
	\draw[color=black, thick] (a5)--(a6);
\end{scope}

%%%%%%%%%%%%%%%%%%%%
%% The (I,J)-tree %%
%%%%%%%%%%%%%%%%%%%%
\begin{scope}[xshift = 300, yshift = -25, scale=0.55]
	\node[style={circle,draw, inner sep=2pt, fill=black}, label=below:{\scriptsize$E_{1,1}$}] (1)  at (1,0)  {};
	\node[style={circle,draw, inner sep=2pt, fill=black}, label=below:{\scriptsize$E_{2,1}$}] (3)  at (3,0)  {};
	\node[style={circle,draw, inner sep=2pt, fill=black}, label=below:{\scriptsize$E_{3,2}$}] (5)  at (5,0)  {};
	\node[style={circle,draw, inner sep=2pt, fill=black}, label=below:{\scriptsize$E_{3,1}$}] (6)  at (6,0)  {};
	\node[style={circle,draw, inner sep=2pt, fill=black}, label=below:{\scriptsize$E_{4,2}$}] (8)  at (8,0)  {};
	\node[style={circle,draw, inner sep=2pt, fill=black}, label=below:{\scriptsize$E_{4,1}$}] (9)  at (9,0)  {};
	\node[style={circle,draw, thick, inner sep=2pt, fill=none}, label=below:{\scriptsize$N_1$}] (2)  at (2,0)  {};
	\node[style={circle,draw, thick,inner sep=2pt, fill=none}, label=below:{\scriptsize$N_2$}] (4)  at (4,0)  {};
	\node[style={circle,draw, thick, inner sep=2pt, fill=none}, label=below:{\scriptsize$N_3$}] (7)  at (7,0)  {};
	\node[style={circle,draw, thick, inner sep=2pt, fill=none}, label=below:{\scriptsize$N_4$}] (10)  at (10,0)  {};	

    % Arcs 
	\draw[thick] (1) to [bend left=60] (2);
    \draw[thick] (1) to [bend left=60] (7);  
    \draw[thick] (1) to [bend left=70] (10);
    \draw[thick] (3) to [bend left=60] (4);
    \draw[thick] (3) to [bend left=60] (7);    
    \draw[thick] (5) to [bend left=60] (7);
    \draw[thick] (6) to [bend left=60] (7);
    \draw[thick] (8) to [bend left=60] (10);
    \draw[thick] (9) to [bend left=60] (10);

    % Labels on arcs
    \node[] (1)  at (5,2.9)  {\scriptsize \textcolor{cyan}{$1$}};
    \node[] (2)  at (4,1.9)  {\scriptsize \textcolor{cyan}{$2$}};
    \node[] (3)  at (1.9,0.5)  {\scriptsize \textcolor{cyan}{$3$}};
    \node[] (4)  at (4.2,1.2)  {\scriptsize \textcolor{cyan}{$4$}};    
    \node[] (5)  at (3.9,0.5)  {\scriptsize \textcolor{cyan}{$5$}};  
    \node[] (6)  at (5.2,0.6)  {\scriptsize \textcolor{cyan}{$6$}};  
    \node[] (7)  at (5.8,0.3)  {\scriptsize \textcolor{cyan}{$7$}};
    \node[] (8)  at (8.3,0.8)  {\scriptsize \textcolor{cyan}{$8$}};    
    \node[] (9)  at (8.9,0.34)  {\scriptsize \textcolor{cyan}{$9$}};    
\end{scope}
\end{tikzpicture}
\caption{A maximal clique of routes (left) representing a simplex in the length-framed triangulation of $\calF_{\car(\nu)}$ for $\nu=NENE^2NE^2$.  The bijection $\varphi$ of Lemma~\ref{lem:RouteArcBijection} sends each route to the corresponding arc of the $(I,\overline{J})$-tree on the right. The bijection $\Phi$ in the proof of Theorem~\ref{thm.associahedralTriangulation} sends the maximal clique to the $(I,\overline{J})$-tree.
}
\label{fig.length_clique}
\end{figure}
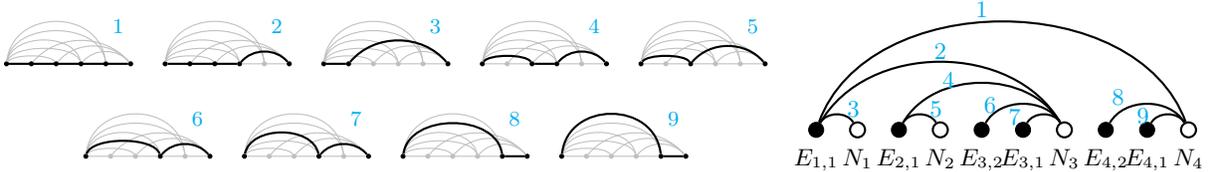

\begin{lemma}\label{lem:coherent}
Let $\prec$ denote the length framing, and let $\varphi$ be the bijection in Lemma~\ref{lem:RouteArcBijection}. 
Two routes $R_{j,i,\ell}$ and $R_{j',i',\ell'}$ in the framed graph $(\car(\nu),\prec)$ are coherent if and only if $\varphi(R_{j,i,\ell})=(E_{j,i},N_{\ell})$ and $\varphi(R_{j',i',\ell'})=(E_{j',i'},N_{\ell'})$ are non-crossing arcs in $\mathcal{A}_\nu$.  
\end{lemma}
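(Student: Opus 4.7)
My plan is to compute, for a pair of routes $R = R_{j,i,\ell}$ and $R' = R_{j',i',\ell'}$ (assume $j \leq j'$ by symmetry), the orderings $\prec_{\In(v)}$ and $\prec_{\Out(v)}$ at each common inner vertex $v$ under the length framing, and match the resulting coherence condition to the crossing criterion for the arcs $\varphi(R), \varphi(R') \in \calA_\nu$. The common inner vertices of $R$ and $R'$ form the interval $\{j'+1, \ldots, \min(\ell,\ell')+1\}$; when $\ell < j'$ this is empty, coherence holds vacuously, and the arcs lie in disjoint intervals of $\overline{\nu}$ (since $N_\ell$ precedes $E_{j',i'}$), hence are non-crossing. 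So I may restrict to $\ell \geq j'$.

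At a common inner vertex $v$ I will locate the divergence points of the incoming and outgoing subpaths. If $j < j'$, the subpaths $Rv, R'v$ diverge at vertex $j'+1$, where $R$ enters via the spine edge $(j', j'+1)$ of length $1$ and $R'$ enters via the multi-edge $(1, j'+1)$ of length $j'$; since the length framing orders longer edges first, $R' \prec_{\In(v)} R$. If $j = j'$ and $i \neq i'$, the divergence is at $j+1$ where both routes use multi-edges $(1, j+1)$ labeled $i, i'$, and the framing convention (smaller label first) gives $R \prec_{\In(v)} R'$ iff $i < i'$. Analogously, if $\ell \neq \ell'$ the outgoing subpaths $vR, vR'$ diverge at $\min(\ell,\ell')+1$, where one takes the long jump edge to $n+1$ and the other the unit-length spine edge; hence the route with the smaller $\ell$ is $\prec_{\Out(v)}$-smaller. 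Crucially, none of these comparisons depend on $v$, so coherence at one common inner vertex is equivalent to coherence at all.

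Combining the incoming and outgoing comparisons yields the coherence criteria: if either $(j,i) = (j',i')$ or $\ell = \ell'$, the routes are automatically coherent; if $j = j'$ with $i \neq i'$ and $\ell \neq \ell'$, coherence holds iff $(i-i')$ and $(\ell-\ell')$ share the same sign; and if $j < j'$ with $\ell \geq j'$, coherence holds iff $\ell \geq \ell'$. I then verify these match the non-crossing conditions in $\overline{\nu}$. Writing $\pi$ for position in $\overline{\nu}$ and using $\pi(E_{j,i}) = \pi(N_j) - i$, a direct positional check shows: the arcs $(E_{j,i}, N_\ell)$ and $(E_{j,i'}, N_{\ell'})$ with $i < i'$ cross iff $\ell > \ell'$; the arcs $(E_{j,i}, N_\ell)$ and $(E_{j',i'}, N_{\ell'})$ with $j < j'$ and $\ell \geq j'$ cross iff $\ell < \ell'$; and in all remaining cases the arcs either share an endpoint or are fully separated, hence non-crossing. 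Each case matches the coherence criterion.

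The main obstacle will be organizing the case split so that the structural parallel between the two length-framings (both governed by the rule ``longest edge first'') stays visible, rather than devolving into a table of sub-cases. Once the divergence vertices are identified the rest is routine: the length framing collapses the incoming and outgoing analyses to the same shape, and the position formula $\pi(E_{j,i}) = \pi(N_j) - i$ reduces the crossing check to simple inequalities among $j, j', i, i', \ell, \ell'$.
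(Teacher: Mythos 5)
Your proposal is correct and follows essentially the same route as the paper's proof: a case analysis on how $(j,i,\ell)$ and $(j',i',\ell')$ relate, locating the divergence vertices of the incoming and outgoing subpaths under the length framing and matching the resulting (in)coherence to the crossing configurations of the arcs. You are more explicit than the paper — which compresses the argument to two crossing cases plus a figure — in computing the $\prec_{\In}$/$\prec_{\Out}$ comparisons, observing they are independent of the common vertex $v$, and handling the degenerate case $\ell < j'$ where the routes share no inner vertex; these details are all verifiably correct.
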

\begin{proof}
We can assume without loss of generality that $\ell < \ell'$ (if $\ell=\ell'$, the arcs are non-crossing and the routes are coherent).
There are two ways in which the arcs $\varphi(R_{j,i,\ell})$ and $\varphi(R_{j',i',\ell'})$ can cross: (1) when $j<j'$, or (2) when $j=j'$ with $i'< i$. In both cases we have that $R_{j,i,\ell}$ and $R_{j',i',\ell'}$ are incoherent at the vertex $j'$ (see Figure \ref{fig:phiCoherence}). Conversely, if $R_{j,i,\ell}$ and $R_{j',i',\ell'}$ are incoherent, they must be incoherent at a minimal vertex $j'$. Thus either $j<j'$ or $j=j'$ with $i'<i$, which are precisely the ways in which $\varphi(R_{j,i,\ell})$ and $\varphi(R_{j',i',\ell'})$ can cross.
\end{proof}

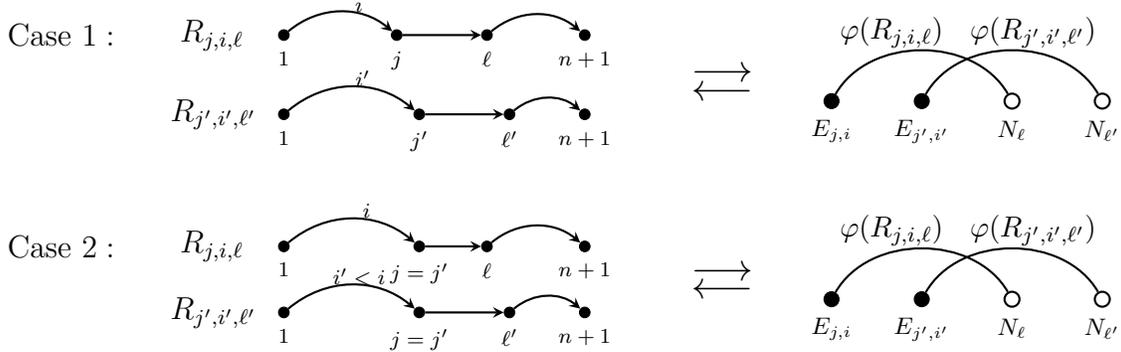
\begin{figure}[ht!]
\begin{tikzpicture}

%%%%%%%%%%%%%%%%
%%%% CASE 1 %%%%
%%%%%%%%%%%%%%%%
\begin{scope}[xshift=-20, yshift=-100, scale=0.5]
	%nodes 
	\node[] (1)  at (0,0)  {Case $1:$};
\end{scope}
\begin{scope}[xshift = 50, yshift = -100, scale=0.5]
    %nodes 
	\node[label=left:{$R_{j,i,\ell}$}] (1)  at (0.5,0)  {};
	
	\vertex[fill,label=below:\tiny{$1$}](a1) at (1,0) {};
	\vertex[fill,label=below:\tiny{$j$}](a4) at (4,0) {};
	\vertex[fill,label=below:\tiny{$\ell$}](a7) at (6.4,0) {};
	\vertex[fill,label=below:\tiny{$n+1$}](a9) at (9,0) {};	

    \node[] (2) at (3,0.8) {{\tiny $i$}};
	
	\draw[-stealth, thick] (a1) to[out=40,in=140] (a4);
	\draw[-stealth, thick] (a4)--(a7);	\draw[-stealth, thick] (a7) to[out=40,in=140] (a9);

\end{scope}
\begin{scope}[xshift = 50, yshift = -130, scale=0.5]
    %nodes 
	\node[label=left:{$R_{j',i',\ell'}$}] (1)  at (0.85,0)  {};
	
	\vertex[fill,label=below:\tiny{$1$}](a1) at (1,0) {};
	\vertex[fill,label=below:\tiny{$j'$}](a4) at (4.6,0) {};
	\vertex[fill,label=below:\tiny{$\ell'$}](a7) at (7,0) {};
	\vertex[fill,label=below:\tiny{$n+1$}](a9) at (9,0) {};	

    \node[] (2) at (3.1,0.95) {{\tiny $i'$}};
	
	\draw[-stealth, thick] (a1) to[out=40,in=140] (a4);
	\draw[-stealth, thick] (a4)--(a7);	\draw[-stealth, thick] (a7) to[out=40,in=140] (a9);

\end{scope}
\begin{scope}[xshift=260, yshift=-125, scale=0.4]
	%nodes 
	\node[style={circle,draw, inner sep=2pt, fill=black}, label=below:{\scriptsize $E_{j,i}$}] (1)  at (1,0)  {};
	\node[style={circle,draw, inner sep=2pt, thick, fill=none}, label=below:{\scriptsize $N_{\ell}$}] (5)  at (7,0)  {};
	\node[style={circle,draw, inner sep=2pt, fill=black}, label=below:{\scriptsize $E_{j',i'}$}] (3)  at (4,0)  {};
	\node[style={circle,draw, inner sep=2pt, thick, fill=none}, label=below:{\scriptsize $N_{\ell'}$}] (7)  at (10,0)  {};
	
	\draw[thick] (1) to [bend left=60] (5);
    \draw[thick] (3) to [bend left=60] (7);   

	\node[] (1)  at (3,2.3)  {{\small $\varphi(R_{j,i,\ell})$}};
	\node[] (1)  at (7.7,2.3)  {{\small $\varphi(R_{j',i',\ell'})$}};

\end{scope}

\begin{scope}[xshift=230, yshift=-115, scale=0.5]
	%nodes 
	\node[] (1)  at (0,0)  {\large $\longrightarrow$};
	\node[] (1)  at (0,-0.5)  {\large $\longleftarrow$};	
\end{scope}

%%%%%%%%%%%%
%% CASE 2 %%
%%%%%%%%%%%%

\begin{scope}[xshift=-20, yshift=-180, scale=0.5]
	%nodes 
	\node[] (1)  at (0,0)  {Case $2:$};
\end{scope}

\begin{scope}[xshift = 50, yshift = -180, scale=0.5]
	%nodes 
	
    %nodes 
	\node[label=left:{$R_{j,i,\ell}$}] (1)  at (0.5,0)  {};
	
	\vertex[fill,label=below:\tiny{$1$}](a1) at (1,0) {};
	\vertex[fill,label=below:\tiny{$j=j'$}](a4) at (4.6,0) {};
	\vertex[fill,label=below:\tiny{$\ell$}](a7) at (6.4,0) {};
	\vertex[fill,label=below:\tiny{$n+1$}](a9) at (9,0) {};	

    \node[] (2) at (3.2,0.95) {{\tiny $i$}};
	
	\draw[-stealth, thick] (a1) to[out=40,in=140] (a4);
	\draw[-stealth, thick] (a4)--(a7);	\draw[-stealth, thick] (a7) to[out=40,in=140] (a9);
\end{scope}
\begin{scope}[xshift = 50, yshift = -205, scale=0.5]
%nodes 
	\node[label=left:{$R_{j',i',\ell'}$}] (1)  at (0.85,0)  {};
	
	\vertex[fill,label=below:\tiny{$1$}](a1) at (1,0) {};
	\vertex[fill,label=below:\tiny{$j=j'$}](a4) at (4.6,0) {};
	\vertex[fill,label=below:\tiny{$\ell'$}](a7) at (7,0) {};
	\vertex[fill,label=below:\tiny{$n+1$}](a9) at (9,0) {};	

    \node[] (2) at (3,0.95) {{\tiny $i'<i$}};
	
	\draw[-stealth, thick] (a1) to[out=40,in=140] (a4);
	\draw[-stealth, thick] (a4)--(a7);	\draw[-stealth, thick] (a7) to[out=40,in=140] (a9);
\end{scope}
\begin{scope}[xshift=260, yshift=-200, scale=0.4]
	%nodes 
	\node[style={circle,draw, inner sep=2pt, fill=black}, 
	label=below:{\scriptsize $E_{j,i}$}] (1)  at (1,0)  {};
	\node[style={circle,draw, inner sep=2pt, thick, fill=none}, label=below:{\scriptsize  $N_{\ell}$}] (5)  at (7,0)  {};
	\node[style={circle,draw, inner sep=2pt, fill=black}, label=below:{\scriptsize $E_{j',i'}$}] (3)  at (4,0)  {};
	\node[style={circle,draw, inner sep=2pt, thick, fill=none}, label=below:{\scriptsize $N_{\ell'}$}] (7)  at (10,0)  {};
	
	\draw[thick] (1) to [bend left=60] (5);
    \draw[thick] (3) to [bend left=60] (7);   

	\node[] (1)  at (3,2.3)  {{\small $\varphi(R_{j,i,\ell})$}};
	\node[] (1)  at (7.7,2.3)  {{\small $\varphi(R_{j',i',\ell'})$}};
\end{scope}

\begin{scope}[xshift=230, yshift=-190, scale=0.5]
	%nodes 
	\node[] (1)  at (0,0)  {\large $\longrightarrow$};
	\node[] (1)  at (0,-0.5)  {\large $\longleftarrow$};	
\end{scope}

\end{tikzpicture}
\caption{The two cases in the proof of Lemma~\ref{lem:coherent}.}
\label{fig:phiCoherence}
\end{figure}

\associahedralthm*
\begin{proof} 
By Lemma~\ref{lem:coherent}, the bijection $\varphi$ in Lemma~\ref{lem:RouteArcBijection} extends to a bijection $\Phi$ from the set of maximal cliques of routes in the length-framed $\car(\nu)$ to the set $\calT_\nu$ of $(I,\overline{J})$-trees determined by $\nu$. 
Two simplices in a DKK triangulation of a flow polytope are adjacent if and only if they differ by a single vertex, that is, if the corresponding maximal cliques differ by a single route. 
Under the bijection $\Phi$, two simplices are adjacent if and only if their corresponding $(I,\overline{J})$-trees differ by a single arc, which by Corollary~\ref{cor:nuTamariHasseDiag} is precisely the description of the Hasse diagram of the $\nu$-Tamari lattice. 
\end{proof}

\begin{example}
Let $\nu= NENE^2NE^2$.  
One example of the bijection $\Phi$ between cliques of routes of $\car(\nu)$ and $(I,\overline{J})$-trees is illustrated in Figure~\ref{fig.length_clique}. 
The dual graph of the length-framed triangulation of $\calF_{\car(\nu)}$ is shown in Figure~\ref{fig.35TamariLattice}.
\end{example}

In \cite{CPS19} Ceballos, Padrol, and Sarmiento introduced the \emph{$(I, \overline{J})$-Tamari complex} $\mathcal{A}_{I,\overline{J}}$ as the flag simplicial complex on $\{(i, \overline{j}) \in I \times \overline{J} \mid i < \overline{j}\}$ whose minimal non-faces are the pairs $\{(i, \overline{j}), (i', \overline{j}')\}$ with $i < i' < j < j'$, that is, the complex on collections of non-crossing arcs of $(I,\overline{J})$-trees. 
The following result is then a corollary of Theorem \ref{thm.associahedralTriangulation}.

\begin{corollary}\label{cor.geometric_realization}
Let $\nu$ be the lattice path from $(0,0)$ to $(b,a)$ associated to the pair $(I,\overline{J})$. 
The length-framed triangulation of $\calF_{\car(\nu)}$ is a geometric realization of the $(I,\overline{J})$-Tamari complex of dimension $a+b$ in $\bbR^{a+b+3}$.
\end{corollary}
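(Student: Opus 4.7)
The plan is to observe that this corollary is almost immediate from Theorem \ref{thm.associahedralTriangulation} and its proof, once one remembers that the $(I,\overline{J})$-Tamari complex is by construction a \emph{flag} simplicial complex. Since a flag complex is determined by its $1$-skeleton, the task reduces to matching the vertices and edges of the length-framed triangulation with those of $\mathcal{A}_{I,\overline{J}}$ under the bijection $\varphi$ of Lemma \ref{lem:RouteArcBijection}.

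First I would identify the vertex sets. Because the length-framed triangulation is unimodular, its vertex set coincides with the vertex set of $\calF_{\car(\nu)}$, which is the set of routes $\mathcal{R}_\nu$. The bijection $\varphi:\mathcal{R}_\nu \to \mathcal{A}_\nu$ from Lemma \ref{lem:RouteArcBijection} then puts $\mathcal{R}_\nu$ in bijection with the vertex set $\mathcal{A}_\nu=\{(i,\overline{j})\in I\times \overline{J} \mid i<\overline{j}\}$ of $\mathcal{A}_{I,\overline{J}}$.

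Next I would match the $1$-skeletons. In a DKK triangulation a set of routes spans a face precisely when the routes are pairwise coherent, and in particular two routes span an edge of the triangulation if and only if they are coherent. By Lemma \ref{lem:coherent}, this happens if and only if the corresponding arcs are non-crossing, that is, they form an edge of $\mathcal{A}_{I,\overline{J}}$. Since both the length-framed simplicial complex (whose faces are, by definition, the cliques of the coherence graph) and $\mathcal{A}_{I,\overline{J}}$ are flag complexes with matching $1$-skeletons under $\varphi$, the bijection $\varphi$ extends to an isomorphism of abstract simplicial complexes.

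For the realization and dimension counts, the length-framed triangulation is geometrically realized by the convex hulls $\Delta_C$ defined in Section \ref{sec.framed_triangulations}, whose top-dimensional simplices have $a+b+1$ vertices matching $\dim\calF_{\car(\nu)}=a+b$ as computed in Section \ref{sec.nucaracolgraph}. The polytope $\calF_{\car(\nu)}$ sits inside its natural edge-coordinate space, and a direct bookkeeping argument identifies the appropriate affine hull with $\bbR^{a+b+3}$. I do not anticipate any real obstacles: the entire argument is packaging results already obtained, the only care needed being to state explicitly that both simplicial complexes in question are flag, so that a $1$-skeleton isomorphism automatically promotes to a simplicial isomorphism.
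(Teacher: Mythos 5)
Your argument matches the paper's: the corollary is stated there as an immediate consequence of Theorem \ref{thm.associahedralTriangulation}, precisely because the $(I,\overline{J})$-Tamari complex is the flag complex on non-crossing arcs and Lemmas \ref{lem:RouteArcBijection} and \ref{lem:coherent} identify its vertices and $1$-skeleton with the routes and the coherence graph, exactly as you spell out. One small imprecision: the affine hull of $\calF_{\car(\nu)}$ has dimension $a+b$, so it cannot be ``identified with $\bbR^{a+b+3}$''; the ambient-space claim concerns a coordinate projection (integral equivalence) of the polytope sitting in $\bbR^{2a+b+2}$, a bookkeeping detail the paper also leaves implicit.
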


\begin{remark}
A graph closely related to $\car(\nu)$ can produce a realization of the  $(I, \overline{J})$-Tamari complex in $\bbR^{a+b+1}$ that is a simple projection of the geometric realization of Corollary \ref{cor.geometric_realization}. The geometric realization of Corollary \ref{cor.geometric_realization} is integrally equivalent to the first of three realizations given in \cite[Theorem 1.1]{CPS19}.
\end{remark}

\subsection{A second description in terms of \texorpdfstring{$\nu$-}-trees}

We conclude this section by indexing simplices in the length-framed triangulation of $\calF_{\car(\nu)}$ by $\nu$-trees, which in terms of the lattice path $\nu$ is an analogous counterpart to the $\nu$-Dyck path description of the planar-framed triangulation in Section \ref{sec.planarframed}. The vertices (routes) of $\calF_{\car(\nu)}$ are encoded by the lattice points in $\calL_\nu$, with the lattice points in each $\nu$-tree corresponding to a maximal simplex in the length-framed triangulation. Furthermore, it then follows that two simplices are adjacent in the length-framed triangulation if their corresponding $\nu$-trees differ by a rotation.

\begin{lemma}\label{lem:latticeptBijection1}
Let $\nu$ be a lattice path from $(0,0)$ to $(b,a)$.
There exists a (first) bijection $\theta:\calR_\nu \to \calL_\nu$ between the set $\calR_\nu$ of routes  in $\car(\nu)$ and the set $\calL_\nu$ of lattice points  in the rectangle defined by $(0,0)$ and $(b,a)$ that lie weakly above $\nu$. 
\end{lemma}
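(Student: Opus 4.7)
The plan is to define $\theta$ explicitly by the formula
$$\theta(R_{j,i,\ell}) := (S_{j-1}-i+1,\ \ell-1), \qquad S_k := \nu_1+\cdots+\nu_k,\ S_0:=0,$$
and then to verify bijectivity row-by-row in $\calL_\nu$. This definition is motivated by (and can also be obtained as) the composition of the bijection $\varphi$ of Lemma~\ref{lem:RouteArcBijection} with the grid representation reviewed in Section~\ref{subsec:nu-Tamari-nu-tree}: the arc $(E_{j,i},N_\ell)\in\calA_\nu$ corresponds in that representation to the lattice point $(S_{j-1}-i+1,\ell-1)$, since $E_{j,i}$ is the $(S_{j-1}-i+2)$-th $E$-step of $\overline{\nu}$ and $N_\ell$ sits at vertical index $\ell-1$.

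First I would describe $\calL_\nu$ row-by-row. Since $\nu$ begins with an $N$-step and its $k$-th $N$-step takes place at $x$-coordinate $S_{k-1}$, the path first reaches each $x$-coordinate $x\geq 1$ at height $\min\{r:S_r\geq x\}$. Consequently, for each $k\in\{0,1,\dots,a\}$ the row $y=k$ of $\calL_\nu$ consists of exactly the $S_k+1$ lattice points $(0,k),(1,k),\dots,(S_k,k)$, and hence $|\calL_\nu|=(a+1)+\sum_{k=1}^{a}S_k$.

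Then I would check bijectivity by fixing $\ell\in\{1,\dots,a+1\}$, which fixes the row $y=\ell-1$ of $\calL_\nu$. The routes $R_{j,i,\ell}$ with this terminal index are parameterized by $j\in\{1,\dots,\ell\}$ together with $i$, where $i=1$ if $j=1$ and $i\in\{1,\dots,\nu_{j-1}\}$ if $j\geq 2$. Under $\theta$, the $j=1$ route is sent to $(0,\ell-1)$, and for each $j\geq 2$ the values of $i$ fill in the consecutive $x$-coordinates $\{S_{j-2}+1,\dots,S_{j-1}\}$; taking the union over $j=1,\dots,\ell$ yields exactly the $x$-coordinates $\{0,1,\dots,S_{\ell-1}\}$ of the row, each hit once. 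Letting $\ell$ vary over $\{1,\dots,a+1\}$ exhausts every row of $\calL_\nu$, which gives the bijection.

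The step I expect to require the most care is purely notational: reconciling the three-parameter indexing $(j,i,\ell)$ of routes with the two-dimensional coordinates $(x,y)$ of $\calL_\nu$, and matching the paper's conventions for labeling $E$-nodes in $\overline{\nu}$ with the multiedge labels in $\car(\nu)$.
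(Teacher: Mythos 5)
Your proposal is correct and is essentially the paper's proof: the paper also defines $\theta$ as the composition of the route-to-arc bijection $\varphi$ of Lemma~\ref{lem:RouteArcBijection} with the grid-representation map $(E_x,N_y)\mapsto(x,y)$ on arcs, which is exactly the map your explicit formula $(S_{j-1}-i+1,\ell-1)$ computes. The only difference is in how bijectivity is verified -- the paper checks that the arc-to-point map and its inverse are well-defined using the count of $E$ steps preceding $N_y$ in $\overline{\nu}$, while you carry out an equivalent direct row-by-row enumeration -- and both verifications are sound.
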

\begin{proof}

The map $\varphi: \calR_\nu \to \calA_\nu$ in Lemma~\ref{lem:RouteArcBijection} gives a bijection. Let $\gamma:\calA_\nu \to \calL_\nu$ be given by $(E_x,N_y) \mapsto (x,y)$. For any arc $(E_x,N_y)$, since $E_x$ appears before $N_y$ in $\overline{\nu}$, we have $\gamma(E_x,N_y) =(x,y)\in \calL_\nu$. The inverse $\gamma\inv$ is well-defined since if $(x,y) \in \calL_\nu$, then $N_y$ is preceded by at least $x$ $E$ steps in $\overline{\nu}$, and hence $(E_x,N_y) \in \calA_\nu$. Now $\theta :=\gamma\circ \varphi$ is the desired bijection. 
\end{proof}

A second bijection between the routes $\calR_\nu$ of $\car(\nu)$ and the lattice points $\calL_\nu$ is given in Lemma~\ref{lem.bijectionlatticepoints}.
The present bijection $\theta$ leads to a characterization of the routes which appear in every simplex of the length-framed triangulation of $\calF_{\car(\nu)}$.

Recall from Lemma~\ref{lem:coherent} that two routes in $\calR_\nu$ are coherent if and only if their corresponding arcs in $\calA_\nu$ are non-crossing. Since the non-crossing condition for arcs in $(I,\overline{J})$-trees translates to the $\nu$-compatibility condition of $\nu$-trees, we have that routes in $\calR_\nu$ are coherent if and only if their corresponding lattice points via $\theta$ are $\nu$-compatible. 
If we associate each lattice point in $\calL_\nu$ with the corresponding route in $\car(\nu)$ via the bijection $\theta$ in Lemma~\ref{lem:latticeptBijection1}, then the lattice points in a $\nu$-tree correspond to a maximal clique of routes. 
Two adjacent simplices in the length-framed triangulation of $\calF_{\car(\nu)}$ differ by a single vertex, and the corresponding $\nu$-trees differ by a single lattice point via a rotation. 
Note that a $\nu$-tree will always contain the root $(0,a)$, the valleys of the lattice path $\nu$, along with each initial point of any initial $N$ steps of $\nu$ and each terminal point of any terminal $E$ steps of $\nu$. 
These points correspond to the routes which are coherent with all other routes in the length-framing, and thus appear in every simplex of the length-framed triangulation. 

In the example in Figure~\ref{fig.length_clique_and_nu_tree}, the routes which appear in every simplex of the length-framed triangulation of $\calF_{\car(\nu)}$ are labeled $1,3,5,7,8$ and $9$. 

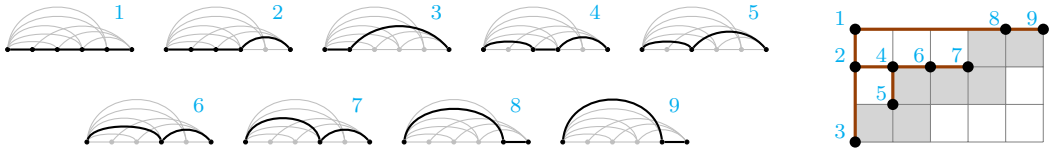
\begin{figure}[ht!]
\centering
\begin{tikzpicture}
%route 1
\begin{scope}[xshift=0, yshift=0, scale=.33]
    \node[]  at (5.5,1.5)  {\tiny \textcolor{cyan}{$1$}};
    \cvx[color=black](a1) at (1,0) {}; \cvx[color=black](a2) at (2,0) {}; 
	\cvx[color=black](a3) at (3,0) {}; \cvx[color=black](a4) at (4,0) {}; 
	\cvx[color=black](a5) at (5,0) {}; \cvx[color=black](a6) at (6,0) {};
	%straight line	
	\draw[color=black, thick] (a1)--(a6);
	%top arcs			
	\draw[color=black!25] (a1) .. controls (1.25,.4) and (2.75,.4) .. (a3);	
    \draw[color=black!25] (a1) .. controls (1.25,0.8) and (3.75,0.8) .. (a4);
    \draw[color=black!25] (a1) .. controls (1.25,1.25) and (3.75,1.25) .. (a4);    
	\draw[color=black!25] (a1) .. controls (1.25, 1.75) and (4.75,1.75) .. (a5);
	\draw[color=black!25] (a1) .. controls (1.25, 2.25) and (4.75,2.25) .. (a5);
	%bottom arcs
	\draw[color=black!25] (a4) to[out=50,in=130] (a6);
	\draw[color=black!25] (a3) to[out=50,in=130] (a6);
	\draw[color=black!25] (a2) to[out=50,in=130] (a6);
\end{scope}
%route 2
\begin{scope}[xshift=60, yshift=0, scale=.33]
	\node[]  at (5.5,1.5)  {\tiny \textcolor{cyan}{$2$}};
	\cvx[color=black](a1) at (1,0) {}; 
	\cvx[color=black](a2) at (2,0) {}; 
	\cvx[color=black](a3) at (3,0) {}; 
	\cvx[color=black](a4) at (4,0) {}; 
	\cvx[color=black!25](a5) at (5,0) {};
	\cvx[color=black](a6) at (6,0) {};
	%straight line
	\draw[color=black!25] (a4)--(a6);
	%top arcs			
	\draw[color=black!25] (a1) .. controls (1.25,.4) and (2.75,.4) .. (a3);	
    \draw[color=black!25] (a1) .. controls (1.25,0.8) and (3.75,0.8) .. (a4);
    \draw[color=black!25] (a1) .. controls (1.25,1.25) and (3.75,1.25) .. (a4);    
	\draw[color=black!25] (a1) .. controls (1.25, 1.75) and (4.75,1.75) .. (a5);
	\draw[color=black!25] (a1) .. controls (1.25, 2.25) and (4.75,2.25) .. (a5);
	%bottom arcs
	\draw[color=black!25] (a3) to[out=50,in=130] (a6);
	\draw[color=black!25] (a2) to[out=50,in=130] (a6);
	%route
	\draw[color=black, thick] (a4) to[out=50,in=130] (a6);
	\draw[color=black, thick] (a1)--(a4);
\end{scope}
%route 3
\begin{scope}[xshift=120, yshift=0, scale=.33]
	\node[]  at (5.5,1.5)  {\tiny \textcolor{cyan}{$3$}};
	\cvx[color=black](a1) at (1,0) {}; 
	\cvx[color=black](a2) at (2,0) {}; 
	\cvx[color=black!25](a3) at (3,0) {}; 
	\cvx[color=black!25](a4) at (4,0) {}; 
	\cvx[color=black!25](a5) at (5,0) {};
	\cvx[color=black](a6) at (6,0) {};
	%straight line	
	\draw[color=black!25] (a2)--(a6);
	%top arcs			
	\draw[color=black!25] (a1) .. controls (1.25,.4) and (2.75,.4) .. (a3);	
    \draw[color=black!25] (a1) .. controls (1.25,0.8) and (3.75,0.8) .. (a4);
    \draw[color=black!25] (a1) .. controls (1.25,1.25) and (3.75,1.25) .. (a4);    
	\draw[color=black!25] (a1) .. controls (1.25, 1.75) and (4.75,1.75) .. (a5);
	\draw[color=black!25] (a1) .. controls (1.25, 2.25) and (4.75,2.25) .. (a5);
	%bottom arcs
	\draw[color=black!25] (a4) to[out=50,in=130] (a6);
	\draw[color=black!25] (a3) to[out=50,in=130] (a6);
	\draw[color=black!25] (a2) to[out=50,in=130] (a6);
    %route
    \draw[color=black, thick] (a1)--(a2);
	\draw[color=black, thick] (a2) to[out=50,in=130] (a6);
\end{scope}
%route 4
\begin{scope}[xshift=180, yshift=0, scale=.33]
	\node[]  at (5.5,1.5)  {\tiny \textcolor{cyan}{$4$}};
	\cvx[color=black](a1) at (1,0) {}; 
	\cvx[color=black!25](a2) at (2,0) {}; 
	\cvx[color=black](a3) at (3,0) {}; 
	\cvx[color=black](a4) at (4,0) {}; 
	\cvx[color=black!25](a5) at (5,0) {};
	\cvx[color=black](a6) at (6,0) {};
	%straight line	
	\draw[color=black!25] (a1)--(a3);
	\draw[color=black!25] (a4)--(a6);
	%top arcs			
    \draw[color=black!25] (a1) .. controls (1.25,0.8) and (3.75,0.8) .. (a4);
    \draw[color=black!25] (a1) .. controls (1.25,1.25) and (3.75,1.25) .. (a4);    
	\draw[color=black!25] (a1) .. controls (1.25, 1.75) and (4.75,1.75) .. (a5);
	\draw[color=black!25] (a1) .. controls (1.25, 2.25) and (4.75,2.25) .. (a5);
	%bottom arcs
	\draw[color=black!25] (a4) to[out=50,in=130] (a6);
	\draw[color=black!25] (a3) to[out=50,in=130] (a6);
	\draw[color=black!25] (a2) to[out=50,in=130] (a6);
	%route
	\draw[color=black, thick] (a1) .. controls (1.25,.4) and (2.75,.4) .. (a3);
	\draw[color=black, thick] (a3)--(a4);
	\draw[color=black, thick] (a4) to[out=50,in=130] (a6);
\end{scope}
%route 5
\begin{scope}[xshift=240, yshift=0, scale=.33]
	\node[]  at (5.5,1.5)  {\tiny \textcolor{cyan}{$5$}};
	\cvx[color=black](a1) at (1,0) {}; 
	\cvx[color=black!25](a2) at (2,0) {}; 
	\cvx[color=black](a3) at (3,0) {}; 
	\cvx[color=black!25](a4) at (4,0) {}; 
	\cvx[color=black!25](a5) at (5,0) {};
	\cvx[color=black](a6) at (6,0) {};
	%straight line	
	\draw[color=black!25] (a1)--(a3);
	\draw[color=black!25] (a3)--(a6);
	%top arcs			
    \draw[color=black!25] (a1) .. controls (1.25,0.8) and (3.75,0.8) .. (a4);
    \draw[color=black!25] (a1) .. controls (1.25,1.25) and (3.75,1.25) .. (a4);    
	\draw[color=black!25] (a1) .. controls (1.25, 1.75) and (4.75,1.75) .. (a5);
	\draw[color=black!25] (a1) .. controls (1.25, 2.25) and (4.75,2.25) .. (a5);
	%bottom arcs
	\draw[color=black!25] (a4) to[out=50,in=130] (a6);
	\draw[color=black!25] (a2) to[out=50,in=130] (a6);
	%route
    \draw[color=black, thick] (a1) .. controls (1.25,.4) and (2.75,.4) .. (a3);	
	\draw[color=black, thick] (a3) to[out=50,in=130] (a6);
\end{scope}
%route 6
\begin{scope}[xshift=30, yshift=-35, scale=.33]
	\node[]  at (5.5,1.5)  {\tiny \textcolor{cyan}{$6$}};
	\cvx[color=black](a1) at (1,0) {}; 
	\cvx[color=black!25](a2) at (2,0) {}; 
	\cvx[color=black!25](a3) at (3,0) {}; 
	\cvx[color=black](a4) at (4,0) {}; 
	\cvx[color=black!25](a5) at (5,0) {};
	\cvx[color=black](a6) at (6,0) {};
	%straight line	
	\draw[color=black!25] (a1)--(a4); \draw[color=black!25] (a4)--(a6);
	%top arcs			
	\draw[color=black!25] (a1) .. controls (1.25,.4) and (2.75,.4) .. (a3);	
    \draw[color=black!25] (a1) .. controls (1.25,1.25) and (3.75,1.25) .. (a4);
	\draw[color=black!25] (a1) .. controls (1.25, 1.75) and (4.75,1.75) .. (a5);
	\draw[color=black!25] (a1) .. controls (1.25, 2.25) and (4.75,2.25) .. (a5);
	%bottom arcs
	\draw[color=black!25] (a4) to[out=50,in=130] (a6);
	\draw[color=black!25] (a3) to[out=50,in=130] (a6);
	\draw[color=black!25] (a2) to[out=50,in=130] (a6);
    %route
    \draw[color=black, thick] (a1) .. controls (1.25,0.8) and (3.75,0.8) .. (a4);
	\draw[color=black, thick] (a4) to[out=50,in=130] (a6);
\end{scope}
%route 7
\begin{scope}[xshift=90, yshift=-35, scale=.33]
	\node[]  at (5.5,1.5)  {\tiny \textcolor{cyan}{$7$}};
	\cvx[color=black](a1) at (1,0) {}; 
	\cvx[color=black!25](a2) at (2,0) {}; 
	\cvx[color=black!25](a3) at (3,0) {}; 
	\cvx[color=black](a4) at (4,0) {}; 
	\cvx[color=black!25](a5) at (5,0) {};
	\cvx[color=black](a6) at (6,0) {};
	%straight line	
	\draw[color=black!25] (a1)--(a4);
	\draw[color=black!25] (a4)--(a6);
	%top arcs			
	\draw[color=black!25] (a1) .. controls (1.25,.4) and (2.75,.4) .. (a3);	
    \draw[color=black!25] (a1) .. controls (1.25,0.8) and (3.75,0.8) .. (a4);
    
	\draw[color=black!25] (a1) .. controls (1.25, 1.75) and (4.75,1.75) .. (a5);
	\draw[color=black!25] (a1) .. controls (1.25, 2.25) and (4.75,2.25) .. (a5);
	%bottom arcs
	\draw[color=black!25] (a3) to[out=50,in=130] (a6);
	\draw[color=black!25] (a2) to[out=50,in=130] (a6);
    %route
	\draw[color=black, thick] (a4) to[out=50,in=130] (a6);    
    \draw[color=black, thick] (a1) .. controls (1.25,1.25) and (3.75,1.25) .. (a4);
\end{scope}
%route 8
\begin{scope}[xshift=150, yshift=-35, scale=.33]
	\node[]  at (5.5,1.5)  {\tiny \textcolor{cyan}{$8$}};
	\cvx[color=black](a1) at (1,0) {}; 
	\cvx[color=black!25](a2) at (2,0) {}; 
	\cvx[color=black!25](a3) at (3,0) {}; 
	\cvx[color=black!25](a4) at (4,0) {}; 
	\cvx[color=black](a5) at (5,0) {};
	\cvx[color=black](a6) at (6,0) {};
	%straight line	
	\draw[color=black!25] (a1)--(a5);
	%top arcs			
	\draw[color=black!25] (a1) .. controls (1.25,.4) and (2.75,.4) .. (a3);	
    \draw[color=black!25] (a1) .. controls (1.25,0.8) and (3.75,0.8) .. (a4);
    \draw[color=black!25] (a1) .. controls (1.25,1.25) and (3.75,1.25) .. (a4);    
    \draw[color=black!25] (a1) .. controls (1.25, 2.25) and (4.75,2.25) .. (a5);
	%bottom arcs
	\draw[color=black!25] (a4) to[out=50,in=130] (a6);
	\draw[color=black!25] (a3) to[out=50,in=130] (a6);
	\draw[color=black!25] (a2) to[out=50,in=130] (a6);
    %route
	\draw[color=black, thick] (a1) .. controls (1.25, 1.75) and (4.75,1.75) .. (a5);
	\draw[color=black, thick] (a5)--(a6);
\end{scope}
%route 9
\begin{scope}[xshift=210, yshift=-35, scale=.33]
	\node[]  at (5.5,1.5)  {\tiny \textcolor{cyan}{$9$}};
	\cvx[color=black](a1) at (1,0) {}; 
	\cvx[color=black!25](a2) at (2,0) {}; 
	\cvx[color=black!25](a3) at (3,0) {}; 
	\cvx[color=black!25](a4) at (4,0) {}; 
	\cvx[color=black](a5) at (5,0) {};
	\cvx[color=black](a6) at (6,0) {};
	%straight line	
	\draw[color=black!25] (a1)--(a5);
	%top arcs			
	\draw[color=black!25] (a1) .. controls (1.25,.4) and (2.75,.4) .. (a3);	
    \draw[color=black!25] (a1) .. controls (1.25,0.8) and (3.75,0.8) .. (a4);
    \draw[color=black!25] (a1) .. controls (1.25,1.25) and (3.75,1.25) .. (a4);    
	\draw[color=black!25] (a1) .. controls (1.25, 1.75) and (4.75,1.75) .. (a5);
	%bottom arcs
	\draw[color=black!25] (a4) to[out=50,in=130] (a6);
	\draw[color=black!25] (a3) to[out=50,in=130] (a6);
	\draw[color=black!25] (a2) to[out=50,in=130] (a6);
    %route
	\draw[color=black, thick] (a1) .. controls (1.25, 2.25) and (4.75,2.25) .. (a5);
	\draw[color=black, thick] (a5)--(a6);
\end{scope}

%nu-tree
\begin{scope}[xshift=330, yshift=-35, scale=0.5]
    \draw[fill, color=gray!33] (0,0) rectangle (1,1); 
    \draw[fill, color=gray!33] (1,1) rectangle (3,2);
    \draw[fill, color=gray!33] (3,2) rectangle (5,3);
    \draw[fill, color=gray!33] (1,0) rectangle (2,1);
    \draw[fill, color=gray!33] (3,1) rectangle (4,2);  
    \draw[very thin, color=gray!100] (0,0) grid (5,3);  
	
	\draw[very thick, color=RawSienna] (0,0)-- (0,3)--(5,3);
	\draw[very thick, color=RawSienna] (1,1)--(1,2)--(0,2);
	\draw[very thick, color=RawSienna] (2,2)--(3,2);
    \draw[very thick, color=RawSienna] (2,2)--(1,2);

	\node[circle,color=black,fill=black, inner sep=1.5pt] (r) at (0,3) {};
	\node[circle,color=black,fill=black, inner sep=1.5pt] (r) at (2,2) {};
	\node[circle,color=black,fill=black, inner sep=1.5pt] (r) at (0,2) {};
	\node[circle,color=black,fill=black, inner sep=1.5pt] (r) at (0,0) {};
	\node[circle,color=black,fill=black, inner sep=1.5pt] (r) at (1,1) {};	
	\node[circle,color=black,fill=black, inner sep=1.5pt] (r) at (1,2) {};	
	\node[circle,color=black,fill=black, inner sep=1.5pt] (r) at (3,2) {};
	\node[circle,color=black,fill=black, inner sep=1.5pt] (r) at (4,3) {};	
	\node[circle,color=black,fill=black, inner sep=1.5pt] (r) at (5,3) {};

    \node[]  at (-.4,3.3)  {\tiny \textcolor{cyan}{$1$}};
    \node[]  at (-.4,2.3)  {\tiny \textcolor{cyan}{$2$}};
    \node[]  at (-.4,0.3)  {\tiny \textcolor{cyan}{$3$}};
    \node[]  at (.7,2.3)  {\tiny \textcolor{cyan}{$4$}};
    \node[]  at (0.7,1.3)  {\tiny \textcolor{cyan}{$5$}};
    \node[]  at (1.7,2.3)  {\tiny \textcolor{cyan}{$6$}};
    \node[]  at (2.7,2.3)  {\tiny \textcolor{cyan}{$7$}};
    \node[]  at (3.7,3.3)  {\tiny \textcolor{cyan}{$8$}};
    \node[]  at (4.7,3.3)  {\tiny \textcolor{cyan}{$9$}};
\end{scope}
\end{tikzpicture}
\caption{A maximal clique of routes (left) representing a simplex in the length-framed triangulation of $\calF_{\car(\nu)}$ for $\nu=NENE^2NE^2$. 
The bijection $\theta$ of Lemma~\ref{lem:latticeptBijection1} sends each route to a lattice point in the corresponding $\nu$-tree (right).
}
\label{fig.length_clique_and_nu_tree}
\end{figure}

%-----------------------------------------------------------------------------
\section{The planar-framed triangulation} 
\label{sec.planarframed}
%-----------------------------------------------------------------------------
The goal of this section is to show that the flow polytope $\calF_{\car(\nu)}$ has a regular unimodular triangulation whose dual graph structure is given by the Hasse diagram of a principal order ideal $I(\nu)$ in Young's lattice.
A consequence of this is we can construct a family of posets $Q_\nu$ so that the dual graph structure of the canonical triangulation of the order polytope $\calO(Q_\nu)$ is also $I(\nu)$.

\subsection{Principal order ideals in Young's lattice}
Recall that {\em Young's lattice} $Y$ is the poset on integer partitions $\lambda$ with covering relations $\lambda \lessdot \lambda'$ if $\lambda$ is obtained from $\lambda'$ by removing one corner box of $\lambda'$. 
Note that a lattice path $\nu$ in the rectangular grid defined by $(0,0)$ to $(b,a)$ defines a partition $\lambda(\nu)=(\lambda_1,\ldots, \lambda_a)$ by letting $\lambda_k= b - \sum_{i=a-k+1}^a \nu_i$ for $k=1,\ldots, a$.
The Young diagram for $\lambda(\nu)$ may be visualized as the region within the rectangle from $(0,0)$ to $(b,a)$ which lies NW of $\nu$.
For example in Figure~\ref{fig.numcargraph}, $\nu=NE^2NENNE^3NE$ defines the partition $\lambda(\nu)=(6,3,3,2)$.
An {\em order ideal} of a poset $P$ is a subset $I\subseteq P$ with the property that if $x\in I$ and $y\leq x$, then $y\in I$. An ideal is said to be {\em principal} if it has a single maximal element $x\in P$, and such an ideal will be denoted by $I(x)$. 

If $\mu$ is a $\nu$-Dyck path, then it lies weakly above the path $\nu$ and so $\mu$ can be identified with a partition $\lambda(\mu)$ that is contained in $\lambda(\nu)$. 
Thus there is a one-to-one correspondence between the set of $\nu$-Dyck paths with the set of elements in the order ideal $I(\nu):= I(\lambda(\nu))$ in $Y$.
Under this correspondence, in terms of $\nu$-Dyck paths, a path $\pi$ covers a path $\mu$ if and only if $\pi$ can be obtained from $\mu$ by replacing a consecutive $NE$ pair by a $EN$ pair. 
See the right side of Figure~\ref{fig.35} for an example of $I(\nu)$ with $\nu=NENE^2NE^2$.

\subsection{The triangulation}

\begin{definition}\label{def.planarframing}
Let $G$ be a planar graph that affords a planar embedding in the plane such that if vertex $i$ is at the coordinates $(x_i,y_i)$, then $x_i<x_j$ for all $i<j$. 
This leads to natural orderings $(\prec_{\inedge(i)}, \prec_{\outedge(i)})$ at every inner vertex $i$ of $G$ as follows:
with respect to the planar embedding of $G$, the incoming edges at the vertex $i$ are ordered in increasing order from the top to the bottom, and the same for the outgoing edges from the vertex $i$.
This is the {\em planar framing} for $G$. 
\end{definition}

It is clear that the graphs $\car(\nu)$ have a planar embedding with the properties of Definition~\ref{def.planarframing} if it is embedded so that the path $1,\ldots, n+1$ lies on the $x$-axis.
Figure~\ref{fig.two_framings_ex} gives an example of the planar framing of $\car(\nu)$ with $\nu=NE^2NENNE^3NE$.

\begin{lemma}\label{lem.bijectionlatticepoints}
Let $\nu$ be a lattice path from $(0,0)$ to $(b,a)$.
There exists a (second) bijection $\psi: \calR_\nu \rightarrow \calL_\nu$ between the set $\calR_\nu$ of routes in $\car(\nu)$ and the set $\calL_\nu$ of lattice points  in the rectangle defined by $(0,0)$ and $(b,a)$ that lie weakly above $\nu$.
\end{lemma}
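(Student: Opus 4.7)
The plan is to define $\psi$ by an explicit geometric recipe that captures the planar structure of $\car(\nu)$ directly, rather than routing through an intermediate family of objects as was done for $\theta$. Concretely, for a route $R_{j,i,\ell} \in \calR_\nu$, I would set
\[
\psi(R_{j,i,\ell}) := \left( \textstyle\sum_{k=1}^{j-2} \nu_k + i,\; \ell - 1 \right) \quad \text{for } j \ge 2, \qquad \psi(R_{1,1,\ell}) := (0, \ell-1).
\]
The motivating picture is that in the planar embedding of $\car(\nu)$, the $i$-th copy of the multi-edge $(1, j+1)$ (read from top to bottom) is in natural correspondence with the $i$-th east step (read from left to right) inside the block $E^{\nu_{j-1}}$ of $\nu$. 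That east step ends at $x$-coordinate $\sum_{k=1}^{j-2} \nu_k + i$ and sits at height $j-1$; the terminal-edge index $\ell$ is then converted to height $y = \ell - 1$.

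First I would verify well-definedness, i.e.\ that $\psi(R_{j,i,\ell}) \in \calL_\nu$. For $j = 1$ the image lies in column $x = 0$, which is weakly above $\nu$. For $j \ge 2$ and $i \in \{1, \ldots, \nu_{j-1}\}$, the $x$-coordinate lies in $\{ \sum_{k=1}^{j-2}\nu_k + 1, \ldots, \sum_{k=1}^{j-1}\nu_k \}$, precisely the $x$-range traversed by $\nu$ in its $E^{\nu_{j-1}}$ block at height $j - 1$. Since the route condition forces $\ell \ge j$, the point $(x, \ell - 1)$ sits at height $\ge j - 1$, hence weakly above $\nu$.

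Next I would show injectivity. For $j = 1$ the image lies in column $x = 0$; for $j \ge 2$, the $x$-coordinate lies in the disjoint interval associated with $j$, and these intervals partition $\{0, 1, \ldots, b\}$. Thus $j$ is recovered uniquely from $x$, after which $i = x - \sum_{k=1}^{j-2}\nu_k$ and $\ell = y + 1$ are determined. This construction simultaneously produces the inverse $\psi\inv$ explicitly. Since Lemma~\ref{lem:latticeptBijection1} already established $|\calR_\nu| = |\calL_\nu|$, an injection between finite sets of equal cardinality is automatically a bijection.

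The main obstacle is conceptual rather than technical: one must adopt the correct convention linking multi-edge labels to east-step positions, so that the planar framing's maximal cliques will later be identifiable (in the proof of Theorem~\ref{thm.roottriangulation}) with $\nu$-Dyck paths. The choice above --- smaller label $i$ corresponds to smaller $x$ --- agrees with the planar framing of Definition~\ref{def.planarframing}; reversing it would recover the bijection $\theta$ tailored to the length framing of Section~\ref{sec.lengthframed}. This is precisely the sense in which $\psi$ and $\theta$ are genuinely distinct bijections despite sharing the same domain and codomain. The degenerate case $j = 1$ (where no multi-edge is present) must be handled separately but poses no real difficulty.
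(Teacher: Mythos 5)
Your map is a genuine bijection and your verification is sound (the $x$-intervals attached to distinct $j$ partition $\{0,\ldots,b\}$, and within the interval for $j$ the admissible heights $y=\ell-1\ge j-1$ are exactly the points of $\calL_\nu$ in that column), so the bare existence statement is proved, by a construction different from the paper's: the paper defines $\psi(R)$ by counting the bounded faces of the planar embedding of $\car(\nu)$ that lie below $R$, above and below the $x$-axis respectively. The problem is your central conceptual claim, namely that ``smaller label $i$ corresponds to smaller $x$'' is the convention matching the planar framing. It is exactly backwards, and this is fatal for the role the lemma plays downstream. Since multiedge labels increase from top to bottom, the copy of $(1,j+1)$ with label $1$ is the outermost arc and hence has the \emph{most} bounded faces beneath it; the paper's recipe therefore gives $x=\sum_{k=1}^{j-1}\nu_k-i+1$ (smaller label $\leftrightarrow$ larger $x$), whereas you propose $x=\sum_{k=1}^{j-2}\nu_k+i$. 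Concretely, for $\nu=NENE^2NE^2$ take $R_{3,1,4}$ (top copy of $(1,4)$, exiting along the spine) and $R_{3,2,3}$ (bottom copy of $(1,4)$, exiting via $(4,6)$). At their only common inner vertex $4$, the top arc precedes the bottom arc among incoming edges and the spine edge $(4,5)$ precedes the sinking edge $(4,6)$ among outgoing edges, so both linear orders put $R_{3,1,4}$ first: the routes are coherent, and indeed they occur together in the maximal clique of Figure~\ref{fig.planar_clique} (routes $7$ and $5$). The paper's $\psi$ sends them to $(3,3)$ and $(2,2)$, which are compatible; your formula sends them to $(2,3)$ and $(3,2)$, which are incompatible. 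With your $\psi$, Lemma~\ref{lem.planarcoherence} is false and maximal cliques do not map onto $\nu$-Dyck paths, defeating the purpose of introducing this second bijection.

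A smaller point: you assert that reversing your convention ``would recover the bijection $\theta$ tailored to the length framing.'' Reversing it gives $x=\sum_{k=1}^{j-1}\nu_k-i+1$, which is the paper's $\psi$; on individual routes this also agrees with $\theta$, since under $\varphi$ the label-$i$ copy of $(1,j+1)$ corresponds to the $E$ step $i$ positions before $N_j$, i.e.\ the $(\nu_{j-1}-i+1)$-st east step of its block. The distinction the paper draws between $\theta$ and $\psi$ lies in their constructions and in which families of maximal cliques they organize, not in opposite orientations of the blocks. Your argument becomes correct if you replace $i$ by $\nu_{j-1}-i+1$ in the $x$-coordinate; as written, you have chosen precisely the convention that does not work.
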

\begin{proof} We fix an embedding of $\car(\nu)$ in the plane so that the path $1,2,\ldots, n+1$ lies on the $x$-axis.
Define a map $\psi: \calR_\nu \rightarrow \calL_\nu$ by $\psi(R) = (j,\ell)$, where $j$ is the number of bounded faces of $\car(\nu)$ that lie below $R$ and above the $x$-axis, and $\ell$ is the number of bounded faces that lie below $R$ and the $x$-axis.
See Figure~\ref{fig.planar_clique} for an example.

To see that $\psi$ is well-defined, first note that any planar embedding of $\car(\nu)$ has $m-n=a+b$ bounded faces, by Euler's formula.
In particular, the fixed planar embedding of $\car(\nu)$ has $a$ bounded faces below the $x$-axis and $b$ bounded faces above the $x$-axis, so $0\leq j\leq b$ and $0\leq \ell\leq a$.
To show that the lattice point $\psi(R)=(j,\ell)$ lies weakly above $\nu$, we must show that $0\leq j \leq \nu_1+\cdots+\nu_\ell$ for a fixed $0\leq \ell \leq a$. 
If $\psi(R)=(j,\ell)$, this means that the last edge of the route $R$ is $(\ell+2,n+1)$.  
By counting the in-degrees of the vertices $3, 4, \ldots, \ell+2$ in $\car(\nu)$, we see that there are at most $\nu_1+\cdots+\nu_{\ell}$ edges in $\car(\nu)$ of the form $(1,j)$ embedded above the $x$-axis for $3\leq j\leq \ell+2$.
Consequently, there are at most $\nu_1+\cdots+\nu_{\ell}$ bounded regions which can lie below the route $R$ and above the $x$-axis.
Therefore, $\psi$ is well-defined.

To see that $\psi$ is invertible, let $(j,\ell) \in \calL_\nu$ so that $ 0 \leq j \leq \nu_1+\cdots+\nu_\ell$.
Then there exists a unique $1<k \leq \ell+2$ such that there are $j$ bounded faces of the embedded $\car(\nu)$ which lie between the edge $(1,k)$ and the $x$-axis.
Let $R$ be the route whose first edge is $(1,k)$ and last edge is $(\ell+2,n+1)$ (recall from Section~\ref{sec.lengthframed} that every route in $\car(\nu)$ is completely characterized by these two edges).  
Then $\psi(R)=(j,\ell)$ as required, and therefore, $\psi$ is a bijection.
\end{proof}

The above bijection $\psi$ leads to a characterization of the routes which appear in every simplex of the planar-framed triangulation of $\calF_{\car(\nu)}$.
As we will see in Theorem~\ref{thm.roottriangulation}, the bijection $\psi$ extends to a bijection $\Psi$ in which a maximal clique of routes in the planar framing of $\car(\nu)$ correspond to the collection of lattice points in a $\nu$-Dyck path.
A $\nu$-Dyck path always contains the lattice points of any initial $N$ steps of $\nu$ and any terminal $E$ steps of $\nu$. Hence, under this bijection, these points correspond to the routes which are coherent with all other routes, and thus appear in every simplex of the planar-framed triangulation.

For example in Figure~\ref{fig.planar_clique}, the routes which appear in every simplex of the planar-framed triangulation of $\calF_{\car(\nu)}$ are labeled $1,2,7,8$ and $9$. 

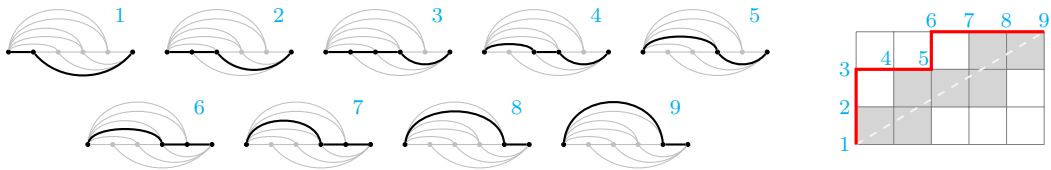
\begin{figure}[ht!]
\centering
\begin{tikzpicture}
%route 1
\begin{scope}[xshift=0, yshift=0, scale=.33]
    \node[]  at (5.5,1.5)  {\tiny \textcolor{cyan}{$1$}};
	\cvx[color=black](a1) at (1,0) {}; 
	\cvx[color=black](a2) at (2,0) {}; 
	\cvx[color=black!25](a3) at (3,0) {}; 
	\cvx[color=black!25](a4) at (4,0) {}; 
	\cvx[color=black!25](a5) at (5,0) {};
	\cvx[color=black](a6) at (6,0) {};
	%straight line	
	\draw[color=black!25] (a2)--(a6);
	%top arcs			
	\draw[color=black!25] (a1) .. controls (1.25,.4) and (2.75,.4) .. (a3);	
    \draw[color=black!25] (a1) .. controls (1.25,0.8) and (3.75,0.8) .. (a4);
    \draw[color=black!25] (a1) .. controls (1.25,1.25) and (3.75,1.25) .. (a4);    
	\draw[color=black!25] (a1) .. controls (1.25, 1.75) and (4.75,1.75) .. (a5);
	\draw[color=black!25] (a1) .. controls (1.25, 2.25) and (4.75,2.25) .. (a5);
	%bottom arcs
	\draw[color=black!25] (a4) to[out=-50,in=230] (a6);
	\draw[color=black!25] (a3) to[out=-50,in=230] (a6);
    %route
    \draw[color=black, thick] (a1)--(a2);
	\draw[color=black, thick] (a2) to[out=-50,in=230] (a6);
\end{scope}
%route 2
\begin{scope}[xshift=60, yshift=0, scale=.33]
    \node[]  at (5.5,1.5)  {\tiny \textcolor{cyan}{$2$}};
    \cvx[color=black](a1) at (1,0) {}; 
	\cvx[color=black](a2) at (2,0) {}; 
	\cvx[color=black](a3) at (3,0) {}; 
	\cvx[color=black!25](a4) at (4,0) {}; 
	\cvx[color=black!25](a5) at (5,0) {};
	\cvx[color=black](a6) at (6,0) {};
	%straight line	
	\draw[color=black!25] (a3)--(a6);
	%top arcs			
	\draw[color=black!25] (a1) .. controls (1.25,.4) and (2.75,.4) .. (a3);	
    \draw[color=black!25] (a1) .. controls (1.25,0.8) and (3.75,0.8) .. (a4);
    \draw[color=black!25] (a1) .. controls (1.25,1.25) and (3.75,1.25) .. (a4);    
	\draw[color=black!25] (a1) .. controls (1.25, 1.75) and (4.75,1.75) .. (a5);
	\draw[color=black!25] (a1) .. controls (1.25, 2.25) and (4.75,2.25) .. (a5);
	%bottom arcs
	\draw[color=black!25] (a4) to[out=-50,in=230] (a6);
	\draw[color=black!25] (a2) to[out=-50,in=230] (a6);
    %route
    \draw[color=black, thick] (a1)--(a3);
	\draw[color=black, thick] (a3) to[out=-50,in=230] (a6);
\end{scope}
%route 3
\begin{scope}[xshift=120, yshift=0, scale=.33]
    \node[]  at (5.5,1.5)  {\tiny \textcolor{cyan}{$3$}};
    \cvx[color=black](a1) at (1,0) {}; 
	\cvx[color=black](a2) at (2,0) {}; 
	\cvx[color=black](a3) at (3,0) {}; 
	\cvx[color=black](a4) at (4,0) {}; 
	\cvx[color=black!25](a5) at (5,0) {};
	\cvx[color=black](a6) at (6,0) {};
	%straight line	
	\draw[color=black!25] (a4)--(a6);
	%top arcs			
	\draw[color=black!25] (a1) .. controls (1.25,.4) and (2.75,.4) .. (a3);	
    \draw[color=black!25] (a1) .. controls (1.25,0.8) and (3.75,0.8) .. (a4);
    \draw[color=black!25] (a1) .. controls (1.25,1.25) and (3.75,1.25) .. (a4);    
	\draw[color=black!25] (a1) .. controls (1.25, 1.75) and (4.75,1.75) .. (a5);
	\draw[color=black!25] (a1) .. controls (1.25, 2.25) and (4.75,2.25) .. (a5);
	%bottom arcs
	\draw[color=black!25] (a3) to[out=-50,in=230] (a6);
	\draw[color=black!25] (a2) to[out=-50,in=230] (a6);
    %route
    \draw[color=black, thick] (a1)--(a4);
	\draw[color=black, thick] (a4) to[out=-50,in=230] (a6);
\end{scope}
%route 4
\begin{scope}[xshift=180, yshift=0, scale=.33]
    \node[]  at (5.5,1.5)  {\tiny \textcolor{cyan}{$4$}};
    \cvx[color=black](a1) at (1,0) {}; 
	\cvx[color=black!25](a2) at (2,0) {}; 
	\cvx[color=black](a3) at (3,0) {}; 
	\cvx[color=black](a4) at (4,0) {}; 
	\cvx[color=black!25](a5) at (5,0) {};
	\cvx[color=black](a6) at (6,0) {};
	%straight line	
	\draw[color=black!25] (a1)--(a3);
	\draw[color=black!25] (a4)--(a6);
	%top arcs			
		
    \draw[color=black!25] (a1) .. controls (1.25,0.8) and (3.75,0.8) .. (a4);
    \draw[color=black!25] (a1) .. controls (1.25,1.25) and (3.75,1.25) .. (a4);    
	\draw[color=black!25] (a1) .. controls (1.25, 1.75) and (4.75,1.75) .. (a5);
	\draw[color=black!25] (a1) .. controls (1.25, 2.25) and (4.75,2.25) .. (a5);
	%bottom arcs
	\draw[color=black!25] (a3) to[out=-50,in=230] (a6);
	\draw[color=black!25] (a2) to[out=-50,in=230] (a6);
    %route
    \draw[color=black, thick] (a1) .. controls (1.25,.4) and (2.75,.4) .. (a3);
    \draw[color=black, thick] (a3)--(a4);
	\draw[color=black, thick] (a4) to[out=-50,in=230] (a6);
\end{scope}
%route 5
\begin{scope}[xshift=240, yshift=0, scale=.33]
    \node[]  at (5.5,1.5)  {\tiny \textcolor{cyan}{$5$}};
    \cvx[color=black](a1) at (1,0) {}; 
	\cvx[color=black!25](a2) at (2,0) {}; 
	\cvx[color=black!25](a3) at (3,0) {}; 
	\cvx[color=black](a4) at (4,0) {}; 
	\cvx[color=black!25](a5) at (5,0) {};
	\cvx[color=black](a6) at (6,0) {};
	%straight line	
	\draw[color=black!25] (a1)--(a4);
	\draw[color=black!25] (a4)--(a6);
	%top arcs			
	\draw[color=black!25] (a1) .. controls (1.25,.4) and (2.75,.4) .. (a3);	
    \draw[color=black!25] (a1) .. controls (1.25,1.25) and (3.75,1.25) .. (a4);    
	\draw[color=black!25] (a1) .. controls (1.25, 1.75) and (4.75,1.75) .. (a5);
	\draw[color=black!25] (a1) .. controls (1.25, 2.25) and (4.75,2.25) .. (a5);
	%bottom arcs
	\draw[color=black!25] (a3) to[out=-50,in=230] (a6);
	\draw[color=black!25] (a2) to[out=-50,in=230] (a6);
    %route
    \draw[color=black, thick] (a1) .. controls (1.25,0.8) and (3.75,0.8) .. (a4);
    \draw[color=black, thick] (a4) to[out=-50,in=230] (a6);
\end{scope}
%route 6
\begin{scope}[xshift=30, yshift=-35, scale=.33]
    \node[]  at (5.5,1.5)  {\tiny \textcolor{cyan}{$6$}};
    \cvx[color=black](a1) at (1,0) {}; 
	\cvx[color=black!25](a2) at (2,0) {}; 
	\cvx[color=black!25](a3) at (3,0) {}; 
	\cvx[color=black](a4) at (4,0) {}; 
	\cvx[color=black](a5) at (5,0) {};
	\cvx[color=black](a6) at (6,0) {};
	%straight line	
	\draw[color=black!25] (a1)--(a4);
	%top arcs			
	\draw[color=black!25] (a1) .. controls (1.25,.4) and (2.75,.4) .. (a3);	
    \draw[color=black!25] (a1) .. controls (1.25,1.25) and (3.75,1.25) .. (a4);    
	\draw[color=black!25] (a1) .. controls (1.25, 1.75) and (4.75,1.75) .. (a5);
	\draw[color=black!25] (a1) .. controls (1.25, 2.25) and (4.75,2.25) .. (a5);
	%bottom arcs
	\draw[color=black!25] (a4) to[out=-50,in=230] (a6);
	\draw[color=black!25] (a3) to[out=-50,in=230] (a6);
	\draw[color=black!25] (a2) to[out=-50,in=230] (a6);
    %route
    \draw[color=black, thick] (a4)--(a6);
    \draw[color=black, thick] (a1) .. controls (1.25,0.8) and (3.75,0.8) .. (a4);
\end{scope}
%route 7
\begin{scope}[xshift=90, yshift=-35, scale=.33]
    \node[]  at (5.5,1.5)  {\tiny \textcolor{cyan}{$7$}};
    \cvx[color=black](a1) at (1,0) {}; 
	\cvx[color=black!25](a2) at (2,0) {}; 
	\cvx[color=black!25](a3) at (3,0) {}; 
	\cvx[color=black](a4) at (4,0) {}; 
	\cvx[color=black](a5) at (5,0) {};
	\cvx[color=black](a6) at (6,0) {};
	%straight line	
	\draw[color=black!25] (a1)--(a4);
	%top arcs			
	\draw[color=black!25] (a1) .. controls (1.25,.4) and (2.75,.4) .. (a3);	
    \draw[color=black!25] (a1) .. controls (1.25,0.8) and (3.75,0.8) .. (a4);
	\draw[color=black!25] (a1) .. controls (1.25, 1.75) and (4.75,1.75) .. (a5);
	\draw[color=black!25] (a1) .. controls (1.25, 2.25) and (4.75,2.25) .. (a5);
	%bottom arcs
	\draw[color=black!25] (a4) to[out=-50,in=230] (a6);
	\draw[color=black!25] (a3) to[out=-50,in=230] (a6);
	\draw[color=black!25] (a2) to[out=-50,in=230] (a6);
    %route
    \draw[color=black, thick] (a1) .. controls (1.25,1.25) and (3.75,1.25) .. (a4);
    \draw[color=black, thick] (a4)--(a6);
\end{scope}
%route 8
\begin{scope}[xshift=150, yshift=-35, scale=.33]
    \node[]  at (5.5,1.5)  {\tiny \textcolor{cyan}{$8$}};
    \cvx[color=black](a1) at (1,0) {}; 
	\cvx[color=black!25](a2) at (2,0) {}; 
	\cvx[color=black!25](a3) at (3,0) {}; 
	\cvx[color=black!25](a4) at (4,0) {}; 
	\cvx[color=black](a5) at (5,0) {};
	\cvx[color=black](a6) at (6,0) {};
	%straight line	
	\draw[color=black!25] (a1)--(a5);
	%top arcs			
	\draw[color=black!25] (a1) .. controls (1.25,.4) and (2.75,.4) .. (a3);	
    \draw[color=black!25] (a1) .. controls (1.25,0.8) and (3.75,0.8) .. (a4);
    \draw[color=black!25] (a1) .. controls (1.25,1.25) and (3.75,1.25) .. (a4);    
	\draw[color=black!25] (a1) .. controls (1.25, 2.25) and (4.75,2.25) .. (a5);
	%bottom arcs
	\draw[color=black!25] (a4) to[out=-50,in=230] (a6);
	\draw[color=black!25] (a3) to[out=-50,in=230] (a6);
	\draw[color=black!25] (a2) to[out=-50,in=230] (a6);
    %route
	\draw[color=black, thick] (a1) .. controls (1.25, 1.75) and (4.75,1.75) .. (a5);
    \draw[color=black, thick] (a5)--(a6);
\end{scope}
%route 9
\begin{scope}[xshift=210, yshift=-35, scale=.33]
    \node[]  at (5.5,1.5)  {\tiny \textcolor{cyan}{$9$}};
    \cvx[color=black](a1) at (1,0) {}; 
	\cvx[color=black!25](a2) at (2,0) {}; 
	\cvx[color=black!25](a3) at (3,0) {}; 
	\cvx[color=black!25](a4) at (4,0) {}; 
	\cvx[color=black](a5) at (5,0) {};
	\cvx[color=black](a6) at (6,0) {};
	%straight line	
	\draw[color=black!25] (a1)--(a5);
	%top arcs			
	\draw[color=black!25] (a1) .. controls (1.25,.4) and (2.75,.4) .. (a3);	
    \draw[color=black!25] (a1) .. controls (1.25,0.8) and (3.75,0.8) .. (a4);
    \draw[color=black!25] (a1) .. controls (1.25,1.25) and (3.75,1.25) .. (a4);    
	\draw[color=black!25] (a1) .. controls (1.25, 1.75) and (4.75,1.75) .. (a5);
	
	%bottom arcs
	\draw[color=black!25] (a4) to[out=-50,in=230] (a6);
	\draw[color=black!25] (a3) to[out=-50,in=230] (a6);
	\draw[color=black!25] (a2) to[out=-50,in=230] (a6);
    %route  
    \draw[color=black, thick] (a1) .. controls (1.25, 2.25) and (4.75,2.25) .. (a5);
    \draw[color=black, thick] (a5)--(a6);
\end{scope}

%nu Dyck path
\begin{scope}[xshift=330, yshift=-35, scale=0.5]
    \draw[fill, color=gray!33] (0,0) rectangle (1,1); 
    \draw[fill, color=gray!33] (1,1) rectangle (3,2);
    \draw[fill, color=gray!33] (3,2) rectangle (5,3);
    \draw[fill, color=gray!33] (1,0) rectangle (2,1);
    \draw[fill, color=gray!33] (3,1) rectangle (4,2);  
    \draw[very thin, color=gray!100] (0,0) grid (5,3);  
   	\draw[dashed, thick, color=gray!10] (0,0)--(5,3); 
		
    \draw[very thick, red] (0,0) -- (0,2) -- (2,2) -- (2,3) -- (5,3);    
    \node[]  at (-.3,0)  {\tiny \textcolor{cyan}{$1$}};
    \node[]  at (-.3,1)  {\tiny \textcolor{cyan}{$2$}};
    \node[]  at (-.3,2)  {\tiny \textcolor{cyan}{$3$}};
    \node[]  at (.8,2.3)  {\tiny \textcolor{cyan}{$4$}};
    \node[]  at (1.8,2.3)  {\tiny \textcolor{cyan}{$5$}};
    \node[]  at (2,3.3)  {\tiny \textcolor{cyan}{$6$}};
    \node[]  at (3,3.3)  {\tiny \textcolor{cyan}{$7$}};
    \node[]  at (4,3.3)  {\tiny \textcolor{cyan}{$8$}};
    \node[]  at (5,3.3)  {\tiny \textcolor{cyan}{$9$}};
\end{scope}
\end{tikzpicture}
\caption{A maximal clique of routes (left) representing a simplex in the planar-framed triangulation of $\calF_{\car(\nu)}$ for $\nu=NENE^2NE^2$.  The extension $\Psi$ of the bijection $\psi$ of Lemma~\ref{lem.bijectionlatticepoints} sends this clique to the $\nu$-Dyck path on the right.
}
\label{fig.planar_clique}
\end{figure}

Given two lattice points $(x_1,y_1)$ and $(x_2,y_2)$ with $x_1<x_2$ are said to be {\em incompatible} if $y_1 > y_2$. Otherwise, any other pair of lattice points are said to be {\em compatible}. Maximal sets of compatible lattice points lying above $\nu$ determine a unique $\nu$-Dyck path.
\begin{lemma}\label{lem.planarcoherence}
Let $\prec$ denote the planar framing, and let $\psi$ be the bijection in Lemma~\ref{lem.bijectionlatticepoints}.
Two routes $R_1$ and $R_2$ in the framed graph $(\car(\nu), \prec)$ are coherent if and only if $\psi(R_1)$ and $\psi(R_2)$ are compatible.
\end{lemma}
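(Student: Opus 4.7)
The plan is to translate both sides of the equivalence into the same statement about the relative vertical positions of the initial and terminal edges of $R_1$ and $R_2$ in the fixed planar embedding of $\car(\nu)$. Writing $R = R_{j,i,\ell}$ for a route with initial edge $(1,j+1)$ of label $i$ and terminal edge $(\ell+1,n+1)$, a direct face count in the construction of $\psi$ shows that
\[
\psi(R) \;=\; \Bigl(\sum_{k=1}^{j-1}\nu_k - i + 1,\; \ell - 1\Bigr),
\]
where we set $\nu_0 = 0$ so that the path-edge case $j = i = 1$ is subsumed. From this formula one reads off that $j^{\psi}(R_1) > j^{\psi}(R_2)$ is equivalent to the initial edge of $R_1$ being drawn strictly above that of $R_2$ (longer multi-edges above shorter ones, and same-endpoint copies ordered by label), and analogously $\ell^{\psi}(R_1) > \ell^{\psi}(R_2)$ is equivalent to the terminal edge of $R_1$ being drawn strictly above that of $R_2$. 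Consequently, compatibility of $\psi(R_1)$ and $\psi(R_2)$ translates into what I will call \emph{concordance}: the initial edges of $R_1,R_2$ and the terminal edges of $R_1,R_2$ are either vertically ordered in the same direction, or coincide in one of the two pairs.

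It then remains to show that coherence under the planar framing is equivalent to concordance. The common inner vertices of $R_1, R_2$ form an integer interval $[k_L, k_R]$ with $k_L = \max(j_1,j_2)+1$ and $k_R = \min(\ell_1,\ell_2)+1$, possibly empty. When the interval is nonempty, coherence at any strictly interior vertex is automatic, since both routes use the path edges there for entry and exit. At $k_L$, assume without loss of generality $j_1 \le j_2$: then $R_2$ enters via its initial edge, while $R_1$ enters via a path edge (or, if $j_1 = j_2$, both routes enter via distinct initial multi-edges). By Definition~\ref{def.planarframing}, the planar order at $\inedge(k_L)$ places multi-edges above the path edge and orders same-endpoint multi-edges by label, so the order of $R_1, R_2$ in $\In(k_L)$ tracks the vertical order of their initial edges. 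For the comparison in $\Out(k_L)$, the forward subpaths $k_L R_1$ and $k_L R_2$ coincide along $[k_L, k_R]$ and diverge at $k_R$, where the route with the smaller $\ell$-value takes its terminal down-edge while the other continues via the path edge; since the planar order at $\outedge(k_R)$ places the path edge above the down-edge, the order in $\Out(k_L)$ tracks the vertical order of the terminal edges. Hence coherence at $k_L$ reduces exactly to concordance, and the parallel analysis at $k_R$ yields the same conclusion. Finally, if $[k_L,k_R]$ is empty then $\max(j_1,j_2) > \min(\ell_1,\ell_2)$ forces $j_1 - j_2$ and $\ell_1 - \ell_2$ to have the same sign, giving concordance and vacuous coherence.

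The main obstacle is the boundary-case bookkeeping: when the shared interval shrinks to the single vertex $k_L = k_R$, or when an incoming/outgoing edge at a shared vertex happens to be the initial or terminal edge of one of the routes, the comparisons in $\In$ or $\Out$ can collapse. In each such configuration, the relevant subpaths either literally agree (so the coherence condition is vacuous) or can be compared directly via the planar order, and in every instance the outcome aligns with concordance. Combining the two halves then gives $R_1,R_2$ coherent $\iff$ concordant $\iff$ $\psi(R_1), \psi(R_2)$ compatible.
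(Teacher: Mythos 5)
Your route is genuinely different from the paper's. The paper obtains the hard implication by invoking a lemma of M\'esz\'aros--Morales--Striker (\cite[Lemma 6.5]{MMS19}), which says that for a planar framing two routes are coherent if and only if they are non-crossing in the planar embedding, and then only needs a two-line geometric argument matching ``non-crossing'' against ``compatible.'' You instead unwind the DKK coherence condition directly from Definition~\ref{def.planarframing}, which makes the proof self-contained at the cost of more case analysis. Your explicit formula $\psi(R_{j,i,\ell})=\bigl(\nu_1+\cdots+\nu_{j-1}-i+1,\ \ell-1\bigr)$ is correct (the first coordinate is the rank, from the bottom, of the initial edge among the edges leaving vertex $1$; the second is the rank of the terminal edge among those entering $n+1$), the translation of compatibility into concordance of the two vertical orders is right, and the analysis at $k_L$ is sound: the $\In(k_L)$ comparison is decided by the edges entering $k_L$ and reproduces the vertical order of the initial edges, while the $\Out(k_L)$ comparison is decided at the forward divergence vertex $k_R$ and reproduces the vertical order of the terminal edges.

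The one step that fails as written is the dismissal of the strictly interior vertices: ``coherence at any strictly interior vertex is automatic, since both routes use the path edges there for entry and exit.'' This misreads the framing orders. The orders $\prec_{\In(v)}$ and $\prec_{\Out(v)}$ compare entire maximal subpaths and are decided at the divergence vertices, not at $v$ itself; two routes that share their entry and exit edges at $v$ can perfectly well be incoherent at $v$ (indeed, if $R_1$ and $R_2$ cross, they are incoherent at \emph{every} common inner vertex, interior ones included). So the claim is false, and it is precisely what you need in the direction ``concordant $\Rightarrow$ coherent.'' The repair is short and uses the mechanism you already applied to $\Out(k_L)$: for every common inner vertex $v$, the backward divergence point of $R_1v$ and $R_2v$ is the same vertex (at most $k_L$) independent of $v$, and the forward divergence point of $vR_1$ and $vR_2$ is the same vertex (at least $k_R$); hence the two comparisons, and therefore coherence at $v$, are identical for all common $v$, and checking a single common vertex suffices. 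With that replacement (and the boundary cases you already sketch), your argument goes through.
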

\begin{proof}
A result of M\'esz\'aros, Morales and Striker~\cite[Lemma 6.5]{MMS19} states that two routes in a planar framing of a graph $G$ are coherent if and only if they are non-crossing in $G$.  
Let $R_1$ and $R_2$ be two routes in $\car(\nu)$ and $\psi(R_1)=(j_1,\ell_1)$, $\psi(R_2)=(j_2,\ell_2)$.

Suppose $R_1$ and $R_2$ are coherent with $\ell_1 < \ell_2$. 
Then the fact that $R_1$ and $R_2$ are non-crossing implies that $j_1 \leq j_2$, hence $(j_1,\ell_1)$ and $(j_2,\ell_2)$ are compatible.
On the other hand suppose $R_1$ and $R_2$ are not coherent.  
Without loss of generality, we may assume that $\ell_1<\ell_2$ (for otherwise, if $\ell_1=\ell_2$ then the routes are coherent).
Let $k$ be the smallest vertex at which the routes cross.  
Then $j_2\leq j_1 \leq k$, and hence $(j_1,\ell_1)$ and $(j_2,\ell_2)$ are not compatible.
\end{proof}

\rootthm*
\begin{proof}
By Lemma~\ref{lem.planarcoherence}, the bijection $\psi$ in Lemma~\ref{lem.bijectionlatticepoints} extends to a bijection $\Psi$ from maximal cliques of routes in the planar-framed $\car(\nu)$ to maximal sets of compatible lattice points lying above $\nu$, which are $\nu$-Dyck paths.
Two simplices in a DKK triangulation of a flow polytope are adjacent if and only if they differ by a single vertex. 
Under the bijection $\Psi$, two simplices are adjacent if and only if their corresponding $\nu$-Dyck paths $\pi_1$ and $\pi_2$ differ by a single lattice point. 
Let $(x_1,y_1) \in \pi_1$ and $(x_2,y_2)\in \pi_2$ be the lattice points which are not contained in both paths. Assume without loss of generality that $x_1<x_2$. Since these lattice points are not compatible, we must have $y_1 > y_2$. Thus $(x_1,y_1)$ is in the top left corner of the single square determined by $(x_1,y_1)$ and $(x_2,y_2)$, while $(x_2,y_2)$ is in the bottom left. In other words, $\pi_1$ and $\pi_2$ differ by a transposition of a consecutive $NE$ pair, which is precisely the description of the covering relation in the principal order ideal $I(\nu)$.
\end{proof}

\begin{example}
Let $\nu= NENE^2NE^2$.  
The bijection $\Psi$ between cliques of routes of $\car(\nu)$ and $\nu$-Dyck paths is shown in Figure~\ref{fig.planar_clique}.
The dual graph of the planar-framed triangulation of $\calF_{\car(\nu)}$ is shown in Figure~\ref{fig.35} on the right. 
\end{example}

\begin{figure}[ht!]
\centering
\begin{tikzpicture}[scale=.75]
\tikzstyle{vertex}=[circle, fill=ForestGreen, inner sep=0pt, minimum size=5pt]
\begin{scope}[scale=.3, xshift=0, yshift=38]
	\draw[thick, color=ForestGreen]    (0,0) -- (10,8) -- (6,13) -- (-4,7) -- (-4,3) -- (0,0);		\vertex[color=ForestGreen] at (0,0)  {};
	\vertex[color=ForestGreen]  at (5,4) {};
	\vertex[color=ForestGreen]  at (10,8) {};
	\vertex[color=ForestGreen]  at (6,13) {};
	\vertex[color=ForestGreen]  at (.5,9.7) {};
	\vertex[color=ForestGreen]  at (-4,7) {};
	\vertex[color=ForestGreen]  at (-4,3) {};
	\draw[thick, color=ForestGreen] (5,4) -- (.5,9.7);
\end{scope}	
%indexes min element in Tamari lattice
\begin{scope}[xshift=10, yshift=-7, scale=0.22]
	\draw[fill, color=gray!33] (0,0) rectangle (2,1);
	\draw[fill, color=gray!33] (1,1) rectangle (4,2);
	\draw[fill, color=gray!33] (3,2) rectangle (5,3);
	\draw[very thin, color=gray!100] (0,0) grid (5,3);
	\draw[thick, color=red] (0,0)--(0,1)--(1,1)--(1,2)--(3,2)--(3,2)--(3,3)--(5,3);
\end{scope}
\begin{scope}[xshift=-77, yshift=20, scale=0.22]
	\draw[fill, color=gray!33] (0,0) rectangle (2,1);
	\draw[fill, color=gray!33] (1,1) rectangle (4,2);
	\draw[fill, color=gray!33] (3,2) rectangle (5,3);
	\draw[very thin, color=gray!100] (0,0) grid (5,3);
	\draw[thick, color=red] (0,0)--(0,2)--(3,2)--(3,3)--(5,3);
\end{scope}
\begin{scope}[xshift=-77, yshift=60, scale=0.22]
	\draw[fill, color=gray!33] (0,0) rectangle (2,1);
	\draw[fill, color=gray!33] (1,1) rectangle (4,2);
	\draw[fill, color=gray!33] (3,2) rectangle (5,3);
	\draw[very thin, color=gray!100] (0,0) grid (5,3);
	\draw[thick, color=red] (0,0)--(0,2)--(2,2)--(2,3)--(5,3);
\end{scope}
\begin{scope}[xshift=-35, yshift=93, scale=0.22]
	\draw[fill, color=gray!33] (0,0) rectangle (2,1);
	\draw[fill, color=gray!33] (1,1) rectangle (4,2);
	\draw[fill, color=gray!33] (3,2) rectangle (5,3);
	\draw[very thin, color=gray!100] (0,0) grid (5,3);
	\draw[thick, color=red] (0,0)--(0,2)--(1,2)--(1,3)--(5,3);
\end{scope}
\begin{scope}[xshift=53, yshift=28, scale=0.22]
	\draw[fill, color=gray!33] (0,0) rectangle (2,1);
	\draw[fill, color=gray!33] (1,1) rectangle (4,2);
	\draw[fill, color=gray!33] (3,2) rectangle (5,3);
	\draw[very thin, color=gray!100] (0,0) grid (5,3);
	\draw[thick, color=red] (0,0)--(0,1)--(1,1)--(1,2)--(2,2)--(2,3)--(5,3);   
\end{scope}
\begin{scope}[xshift=93, yshift=63, scale=0.22]
	\draw[fill, color=gray!33] (0,0) rectangle (2,1);
	\draw[fill, color=gray!33] (1,1) rectangle (4,2);
	\draw[fill, color=gray!33] (3,2) rectangle (5,3);
	\draw[very thin, color=gray!100] (0,0) grid (5,3);
	\draw[thick, color=red] (0,0)--(0,1)--(1,1)--(1,3)--(5,3);    
\end{scope}
%indexes max element in Tamari lattice
\begin{scope}[xshift=12, yshift=123, scale=0.22]
	\draw[fill, color=gray!33] (0,0) rectangle (2,1);
	\draw[fill, color=gray!33] (1,1) rectangle (4,2);
	\draw[fill, color=gray!33] (3,2) rectangle (5,3);
	\draw[very thin, color=gray!100] (0,0) grid (5,3);
	\draw[thick, color=red] (0,0)--(0,3)--(5,3);
\end{scope}

%FILTERS OF NU-ROOT POSET
\begin{scope}[scale=1.2, xshift=230, yshift=0]	
	\vertex[color=ForestGreen] at (0,-.2)  {};
	\vertex[color=ForestGreen]  at (0,1) {};
	\vertex[color=ForestGreen]  at (1,2) {};
	\vertex[color=ForestGreen]  at (2,3) {};
	\vertex[color=ForestGreen]  at (-1,2) {};
	\vertex[color=ForestGreen]  at (0,3) {};
	\vertex[color=ForestGreen]  at (1,4) {};
	\draw[thick, color=ForestGreen] (0,-.2) -- (0,1);
	\draw[thick, color=ForestGreen] (0,1) -- (2,3) -- (1,4);
	\draw[thick, color=ForestGreen] (0,1) -- (-1,2) -- (1,4);
	\draw[thick, color=ForestGreen] (1,2) -- (0,3);
\end{scope}	
\begin{scope}[scale=0.22, xshift=1190, yshift=570]
	\draw[fill, color=gray!33] (0,0) rectangle (2,1);
	\draw[fill, color=gray!33] (1,1) rectangle (4,2);
	\draw[fill, color=gray!33] (3,2) rectangle (5,3);
	\draw[very thin, color=gray!100] (0,0) grid (5,3);
	\draw[thick, color=red] (0,0)--(0,1)--(1,1)--(1,2)--(3,2)--(3,2)--(3,3)--(5,3);
\end{scope}
\begin{scope}[scale=0.22, xshift=1050, yshift=430]
	\draw[fill, color=gray!33] (0,0) rectangle (2,1);
	\draw[fill, color=gray!33] (1,1) rectangle (4,2);
	\draw[fill, color=gray!33] (3,2) rectangle (5,3);
	\draw[very thin, color=gray!100] (0,0) grid (5,3);
	\draw[thick, color=red] (0,0)--(0,1)--(1,1)--(1,2)--(2,2)--(2,3)--(5,3);
\end{scope}
\begin{scope}[scale=0.22, xshift=1600, yshift=420]
	\draw[fill, color=gray!33] (0,0) rectangle (2,1);
	\draw[fill, color=gray!33] (1,1) rectangle (4,2);
	\draw[fill, color=gray!33] (3,2) rectangle (5,3);
	\draw[very thin, color=gray!100] (0,0) grid (5,3);
	\draw[thick, color=red] (0,0)--(0,2)--(3,2)--(3,2)--(3,3)--(5,3);
\end{scope}
\begin{scope}[scale=0.22, xshift=910, yshift=270]
	\draw[fill, color=gray!33] (0,0) rectangle (2,1);
	\draw[fill, color=gray!33] (1,1) rectangle (4,2);
	\draw[fill, color=gray!33] (3,2) rectangle (5,3);
	\draw[very thin, color=gray!100] (0,0) grid (5,3);
	\draw[thick, color=red] (0,0)--(0,1)--(1,1)--(1,3)--(5,3);
\end{scope}
\begin{scope}[scale=0.22, xshift=1460, yshift=250]
	\draw[fill, color=gray!33] (0,0) rectangle (2,1);
	\draw[fill, color=gray!33] (1,1) rectangle (4,2);
	\draw[fill, color=gray!33] (3,2) rectangle (5,3);
	\draw[very thin, color=gray!100] (0,0) grid (5,3);
	\draw[thick, color=red] (0,0)--(0,2)--(2,2)--(2,3)--(5,3);
\end{scope}
\begin{scope}[scale=0.22, xshift=1295, yshift=90]
	\draw[fill, color=gray!33] (0,0) rectangle (2,1);
	\draw[fill, color=gray!33] (1,1) rectangle (4,2);
	\draw[fill, color=gray!33] (3,2) rectangle (5,3);
	\draw[very thin, color=gray!100] (0,0) grid (5,3);
	\draw[thick, color=red] (0,0)--(0,2)--(1,2)--(1,3)--(5,3);
\end{scope}
\begin{scope}[scale=0.22, xshift=1070, yshift=-40]
	\draw[fill, color=gray!33] (0,0) rectangle (2,1);
	\draw[fill, color=gray!33] (1,1) rectangle (4,2);
	\draw[fill, color=gray!33] (3,2) rectangle (5,3);
	\draw[very thin, color=gray!100] (0,0) grid (5,3);
	\draw[thick, color=red] (0,0)--(0,3)--(5,3);
\end{scope}

\end{tikzpicture}
\caption{The $\nu$-Tamari lattice (left) and the Hasse diagram of the order ideal $I(\nu)\subseteq Y$ (right) for $\nu=NENE^2NE^2$.
These are the dual graphs of the length-framed and planar-framed triangulations of $\calF_{\car(\nu)}$.  }
\label{fig.35}
\end{figure}
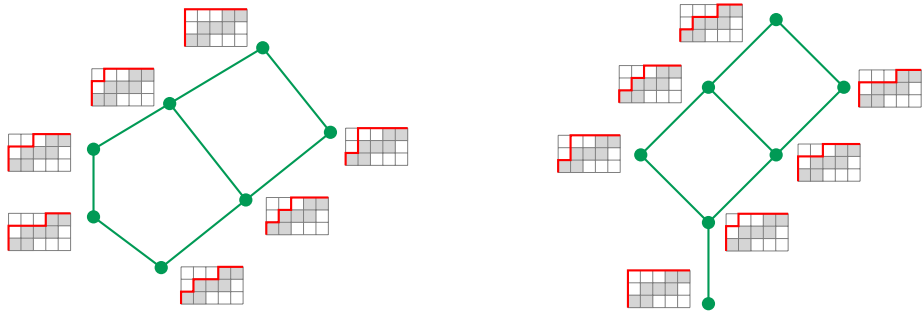

\subsection{Comparing the length-framed and planar-framed triangulations}
A special case when the dual structure of the length-framed and planar-framed triangulations of $\calF_{\car(\nu)}$ are the same is given by the following proposition.

\begin{proposition}
\label{prop.whenTheTriangulationsAreEquivalent}
When $\nu = E^aN^b$, so that the set of $\nu$-Dyck paths is the set of all lattice paths from $(0,0)$ to $(b,a)$, the length-framed triangulation and the planar-framed triangulation of $\calF_{\car(\nu)}$ have the same dual structure.
\end{proposition}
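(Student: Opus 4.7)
The plan is to show that, when $\nu$ is the corner path (so that every lattice path from $(0,0)$ to $(b,a)$ is a $\nu$-Dyck path), the Hasse diagrams of $\mathrm{Tam}(\nu)$ and $I(\nu)$ coincide as undirected graphs. By Theorems~\ref{thm.associahedralTriangulation} and~\ref{thm.roottriangulation}, these Hasse diagrams are precisely the dual graphs of the length-framed and planar-framed triangulations of $\calF_{\car(\nu)}$, so matching them as graphs suffices.

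The first step is to specialize the $\nu$-Tamari rotation to this case. Fix a $\nu$-Dyck path $\mu$ with a valley $p = (x, y)$. Since $\nu$ hugs the lower-right boundary of the rectangle, one can extend any east ray from an interior point eastward until $x = b$ without crossing $\nu$, so $\horiz_\nu(p) = b - x$. The step of $\mu$ immediately after the valley $p$ is necessarily north, landing at $(x, y+1)$, where $\horiz_\nu((x, y+1)) = b - x$ as well. Hence the first lattice point $q$ after $p$ on $\mu$ with $\horiz_\nu(q) = \horiz_\nu(p)$ is $q = (x, y+1)$, and so $\mu_{[p,q]}$ consists of a single north step. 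The rotation at $p$ therefore swaps the preceding east step with this single north step, converting a consecutive $EN$ in $\mu$ into a consecutive $NE$. Conversely, every occurrence of $EN$ in a lattice path is a valley, and the resulting path is automatically a $\nu$-Dyck path in this corner case. Thus the covers of $\mathrm{Tam}(\nu)$ are exactly the consecutive $EN \to NE$ swaps.

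By definition, the covers of $I(\nu)$ are the consecutive $NE \to EN$ swaps on $\nu$-Dyck paths. As unoriented graphs, both sets of covers describe the same edges: two $\nu$-Dyck paths are adjacent iff they differ by swapping one consecutive pair $EN \leftrightarrow NE$. The two lattices orient these edges in opposite directions ($EN \lessdot NE$ in $\mathrm{Tam}(\nu)$ versus $NE \lessdot EN$ in $I(\nu)$, so $\mathrm{Tam}(\nu)$ is in fact the order dual of $I(\nu)$ in this special case), but this is invisible in the dual graph of a triangulation.

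The only place requiring care is the computation $\horiz_\nu(p) = b - x$ together with the identification $q = (x, y+1)$, which is what reduces the general $\nu$-Tamari rotation (potentially swapping an east step with a long subpath) to a single local $EN \leftrightarrow NE$ swap. Everything else is bookkeeping, comparing the two cover descriptions side by side and invoking Theorems~\ref{thm.associahedralTriangulation} and~\ref{thm.roottriangulation}.
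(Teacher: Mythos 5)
Your proof is correct and follows essentially the same route as the paper: both arguments compute $\horiz_\nu$ in the corner case to show the $\nu$-Tamari rotation degenerates to a single $EN\to NE$ swap, and then observe that this matches (up to reversing orientation) the $NE\to EN$ covering relation of $I(\nu)$. The only difference is that the paper goes one step further, using self-duality of $I(\nu)$ for the rectangular shape $\lambda(\nu)=(b^a)$ to upgrade the order-dual relationship to a poset isomorphism $\mathrm{Tam}(\nu)\cong I(\nu)$, whereas you stop at equality of the undirected Hasse diagrams, which already suffices for the statement about dual graphs.
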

\begin{proof}
We use the $\nu$-Dyck path description (see Section~\ref{subsec:nu-Tamari}) of the $\nu$-Tamari lattice $\mathrm{Tam}(\nu)$ in this proof.
Let $\mu$ be a $\nu$-Dyck path.
For any valley point $p$ of $\mu$, the next lattice point $q$ in $\mu$ with $\horiz_\nu(p)=\horiz_\nu(q)$ is the next lattice point after $p$.
This is because the horizontal distance of any of the lattice points in a run of consecutive $N$ steps is the same when $\nu=E^aN^b$.
Performing a rotation on $\mu$ at the valley point $p$ to obtain the $\nu$-Dyck path $\mu'$ is then the same as exchanging the $EN$ pair centered at $p$ with an $NE$ pair of steps in $\mu$.
Thus $\mu <_\nu \mu'$ is a covering relation in the lattice $\mathrm{Tam}(\nu)$ if and only if it is a covering relation in the dual order ideal $I(\nu)^*$.
Therefore, $\mathrm{Tam}(\nu)= I(\nu)^*$.
Lastly, since the partition $\lambda(\nu)=(b^a)$ is rectangular, then $I(\nu)$ is self-dual.
Therefore, $\mathrm{Tam}(\nu)$ and $I(\nu)$ are isomorphic.
\end{proof}

\subsection{A connection with order polytopes} \label{sec.orderpolytopes}
In this subsection, $G$ is a planar graph on the vertex set $[n+1]$ with a planar embedding satisfying the properties outlined in Definition~\ref{def.planarframing}.  
We further assume that the in-degree and out-degree of each vertex $i$ for $i=2,\ldots, n$ is at least one.
A result of M\'esz\'aros, Morales and Striker~\cite[Theorem 3.11]{MMS19} states that for such a graph $G$, the flow polytope $\calF_G$ is integrally equivalent to the order polytope $\calO(P_G)$, where $P_G$ is a poset that is induced by the bounded faces of the planar embedding of $G$.

In this section, we explain how our results for flow polytopes on the caracol graphs $\car(\nu)$ lead to analogous results for a certain class of order polytopes $\calO(Q_\nu)$.
We give a brief background of known results relating order polytopes and flow polytopes following the exposition of~\cite{MMS19}, and explain their implications when applied to $\car(\nu)$.

Let $(P,\leq_P)$ be a finite poset with elements $\{p_1,\ldots, p_d\}$.
The {\em order polytope} of $P$ is the set of points
$$\calO(P) = \left\{ (x_{p_1},\ldots, x_{p_d})\in [0,1]^d \mid x_{p_i} \leq x_{p_j} \hbox{ if } p_i\leq_P p_j \right\}.$$
Given a \emph{linear extension} $\sigma:P\rightarrow [d]$ of the poset $P$, i.e. an order preserving bijection with $[d]$ endowed with its natural order, define the simplex
$$\Delta_\sigma=\left\{ (x_{p_1},\ldots, x_{p_d}) \in [0,1]^d \mid x_{\sigma^{-1}(1)}\leq  \cdots \leq x_{\sigma^{-1}(d)} \right\}.$$
The {\em canonical triangulation} of $\calO(P)$, first defined by Stanley~\cite{Stanley86}, is the set of simplices 
$$\left\{ \Delta_\sigma \mid \sigma \hbox{ is a linear extension of } P\right\}.$$
Thus the normalized volume of $\calO(P)$ is the number of linear extensions of $P$.

For a planar graph $G$ with a fixed embedding in the plane, the {\em truncated dual graph} $G^*$ of $G$ is the dual graph whose vertices correspond to the bounded faces of $G$.  
Viewing $G^*$ as embedded on the plane also, then the orientation on the edges of $G$ induces an orientation on the edges of $G^*$.
The graph $G^*$ then induces the Hasse diagram of a poset that is denoted by $P_G$.
See Figure~\ref{fig.orderpolytope}.

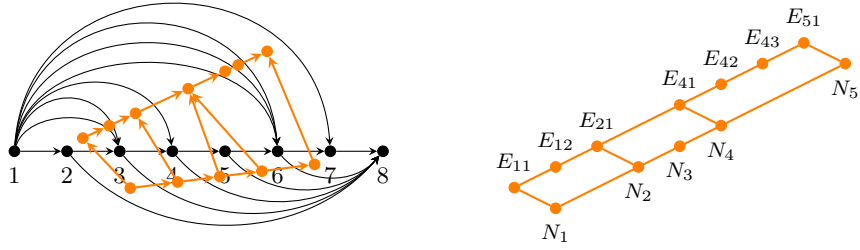
\begin{figure}[ht!]
\centering
\begin{tikzpicture}
%picture of graph with planar framing
\begin{scope}[xshift=0, scale=.7]
	\vertex[fill,label=below:\scriptsize{$1$}](a1) at (1,0) {};
	\vertex[fill,label=below:\scriptsize{$2$}](a2) at (2,0) {};
	\vertex[fill,label=below:\scriptsize{$3$}](a3) at (3,0) {};
	\vertex[fill,label=below:\scriptsize{$4$}](a4) at (4,0) {};
	\vertex[fill,label=below:\scriptsize{$5$}](a5) at (5,0) {};
	\vertex[fill,label=below:\scriptsize{$6$}](a6) at (6,0) {};
	\vertex[fill,label=below:\scriptsize{$7$}](a7) at (7,0) {};
	\vertex[fill,label=below:\scriptsize{$8$}](a8) at (8,0) {};

	%straight line	
	\draw[-stealth] (a1)--(a2);
	\draw[-stealth] (a2)--(a3);
	\draw[-stealth] (a3)--(a4);
	\draw[-stealth] (a4)--(a5);
	\draw[-stealth] (a5)--(a6);
	\draw[-stealth] (a6)--(a7);
	\draw[-stealth] (a7)--(a8);
	%top arcs			
	\draw[-stealth] (a1) .. controls (1.25, 1.3) and (2.75, 1.3) .. (a3);	
	\draw[-stealth] (a1) .. controls (1.25, .8) and (2.75, .8) .. (a3);
	\draw[-stealth] (a1) .. controls (1.25, 1.7) and (3.75, 1.7) .. (a4);
	\draw[-stealth] (a1) .. controls (1.25, 2.2) and (5.75, 2.2) .. (a6);	
	\draw[-stealth] (a1) .. controls (1.25, 2.7) and (5.75, 2.7) .. (a6);
	\draw[-stealth] (a1) .. controls (1.25, 3.2) and (5.75, 3.2) .. (a6);
	\draw[-stealth] (a1) .. controls (1.25, 3.7) and (6.75, 3.7) .. (a7);
    %bottom arcs
	\draw[-stealth] (a6) to[out=-50,in=230] (a8);
	\draw[-stealth] (a5) to[out=-50,in=230] (a8);
	\draw[-stealth] (a4) to[out=-50,in=230] (a8);
	\draw[-stealth] (a3) to[out=-50,in=230] (a8);
	\draw[-stealth] (a2) to[out=-50,in=230] (a8);
	
	%embedded orange graph
	\pvx[color=orange, minimum size =4pt] (e1) at (2.3,.25){};
	\pvx[color=orange, minimum size =4pt] (e2) at (2.8,.485){};
	\pvx[color=orange, minimum size =4pt] (e3) at (3.3,.721){};
	\pvx[color=orange, minimum size =4pt] (e4) at (4.3,1.19){};	
	\pvx[color=orange, minimum size =4pt] (e5) at (5,1.52){};
	\pvx[color=orange, minimum size =4pt] (e6) at (5.25,1.64){};
	\pvx[color=orange, minimum size =4pt] (e7) at (5.8,1.9){};
	\pvx[color=orange, minimum size =4pt] (n1) at (3.2,-.7){};
	\pvx[color=orange, minimum size =4pt] (n2) at (4.1,-.584){};
	\pvx[color=orange, minimum size =4pt] (n3) at (4.9,-.481){};
	\pvx[color=orange, minimum size =4pt] (n4) at (5.7,-.378){};
	\pvx[color=orange, minimum size =4pt] (n5) at (6.7,-.25){};
	\draw[-stealth, thick, color=orange] (e1)--(e2) {};
	\draw[-stealth, thick, color=orange] (e2)--(e3) {};
	\draw[-stealth, thick, color=orange] (e3)--(e4) {};
	\draw[-stealth, thick, color=orange] (e4)--(e5) {};
	\draw[-stealth, thick, color=orange] (e5)--(e6) {};
	\draw[-stealth, thick, color=orange] (e6)--(e7) {};
	\draw[-stealth, thick, color=orange] (n1)--(n2) {};
	\draw[-stealth, thick, color=orange] (n2)--(n3) {};
	\draw[-stealth, thick, color=orange] (n3)--(n4) {};
	\draw[-stealth, thick, color=orange] (n4)--(n5) {};
	\draw[-stealth, thick, color=orange] (n1)--(e1) {};
	\draw[-stealth, thick, color=orange] (n2)--(e3) {};
	\draw[-stealth, thick, color=orange] (n3)--(e4) {};
	\draw[-stealth, thick, color=orange] (n4)--(e4) {};	
	\draw[-stealth, thick, color=orange] (n5)--(e7) {};	
\end{scope}
\begin{scope}[scale=0.55, xshift=380, yshift=-25]
	\vertex[fill, label=above:{\tiny$E_{11}$}, color=orange](e11) at (0,0) {};
	\vertex[fill, label=above:{\tiny$E_{12}$},color=orange](e12) at (1,.5) {};
	\vertex[fill, label=above:{\tiny$E_{21}$},color=orange](e21) at (2,1) {};
	\vertex[fill, label=above:{\tiny$E_{41}$},color=orange](e41) at (4,2) {};
	\vertex[fill, label=above:{\tiny$E_{42}$},color=orange](e42) at (5,2.5) {};
	\vertex[fill, label=above:{\tiny$E_{43}$},color=orange](e43) at (6,3) {};
	\vertex[fill, label=above:{\tiny$E_{51}$},color=orange](e51) at (7,3.5) {};	
	\vertex[fill, label=below:{\tiny$N_1$}, color=orange](n1) at (1,-.5) {};	
	\vertex[fill, label=below:{\tiny$N_2$},color=orange](n2) at (3,0.5) {};
	\vertex[fill, label=below:{\tiny$N_3$},color=orange](n3) at (4,1) {};
	\vertex[fill, label=below:{\tiny$N_4$},color=orange](n4) at (5,1.5) {};
	\vertex[fill, label=below:{\tiny$N_5$},color=orange](n5) at (8,3) {};	
	\draw[thick, color=orange] (e11)--(e51);
	\draw[thick, color=orange] (n1)--(n5);
	\draw[thick, color=orange] (n1)--(e11);
	\draw[thick, color=orange] (n2)--(e21);
	\draw[thick, color=orange] (n4)--(e41);
	\draw[thick, color=orange] (n5)--(e51);
\end{scope}
\end{tikzpicture}
\caption{Let $\nu=NE^2NENNE^3NE$.  
The truncated dual graph of $\car(\nu)$ is the graph whose vertices are the bounded faces of the embedded $\car(\nu)$ (left).  The Hasse diagram of the poset $Q_\nu=P_{\car(\nu)}$ is induced by the truncated dual (right).
} 
\label{fig.orderpolytope}
\end{figure}

In the case $\nu= NE^{\nu_1}\cdots NE^{\nu_a}$ is a lattice path from $(0,0)$ to $(b,a)$, then the poset $P_{\car(\nu)}$ has $a+b$ elements (corresponding to the bounded faces of the embedded $\car(\nu)$), and is constructed in the following way.
There are $a$ elements labeled $N_1,\ldots, N_a$ corresponding to the bounded faces that are uniquely determined by each pair of edges of the form $(j+2,n+1)$ and $(j+3,n+1)$ for $j=1,\dots, a$.  
For each $j=1,\ldots,a$, if there are $k$ edges of the form $(1,j+2)$ in $\car(\nu)$, then there are $k$ elements labeled $E_{j,1},\ldots, E_{j,k}$.  
Since there are $b$ edges of the form $(1,j+2)$ for $j=1,\dots, a$, then there are $b$ elements labeled with an $E$.
The relations in $P_{\car(\nu)}$ are $N_i < N_j$ for $1\leq i<j\leq a$, $E_{j,k} < E_{\ell,n}$ if $(j,k)$ appears before $(\ell,n)$ in lexicographic order, and $N_i < E_{j,k}$ if $i\leq j$. 

Note that if we think of $N_i$ as $N_{i,0}$, then listing the subscripts of these $N$s and $E$s lexicographically recovers the $\nu$-Dyck path.
With this observation, it means that we can define a class of posets $Q_\nu$ (equal to $P_{\car(\nu)}$) indexed by lattice paths $\nu$ without any reference to flow polytopes.
See Figure~\ref{fig.orderpolytope}.

It was first observed by Postnikov (also see M\'esz\'aros, Morales and Striker~\cite[Theorem 1.3]{MMS19}) that the canonical triangulation of $\calO(P_G)$ is the same as the planar-framed DKK triangulation of $\calF_G$ up to an integral equivalence.
Combined with Theorems~\ref{thm.associahedralTriangulation}
and~\ref{thm.roottriangulation}, we have the following two corollaries.

\begin{corollary}
The canonical triangulation of the order polytope $\calO(Q_\nu)$ has dual graph which is the Hasse diagram of the principal order ideal $I(\nu)$ in Young's lattice.
\end{corollary}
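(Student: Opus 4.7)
The plan is to deduce the corollary by stitching together two facts that have both been set up in the preceding discussion: first, the integral equivalence between the flow polytope $\calF_{\car(\nu)}$ (with its planar-framed DKK triangulation) and the order polytope $\calO(Q_\nu)$ (with its canonical triangulation); and second, Theorem~\ref{thm.roottriangulation}, which identifies the dual graph of the planar-framed triangulation with the Hasse diagram of $I(\nu)$.

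More precisely, the first step is to invoke the Postnikov--M\'esz\'aros--Morales--Striker result cited just before the corollary: for a planar graph $G$ whose embedding satisfies the hypotheses of Definition~\ref{def.planarframing} and for which every inner vertex has in- and out-degree at least one, the flow polytope $\calF_G$ is integrally equivalent to the order polytope $\calO(P_G)$, and this integral equivalence carries the planar-framed DKK triangulation of $\calF_G$ to the canonical triangulation of $\calO(P_G)$. I would verify that $\car(\nu)$ satisfies these hypotheses (the embedding described in Section~\ref{sec.planarframed} places vertices $1,\ldots,n+1$ on the $x$-axis in order, and each inner vertex has both an incoming and outgoing edge) and that, as observed above, $P_{\car(\nu)} = Q_\nu$. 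This gives an integral equivalence $\varphi \colon \calF_{\car(\nu)} \to \calO(Q_\nu)$ that sends the simplices of the planar-framed triangulation bijectively onto the simplices of the canonical triangulation.

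The second step is purely formal: an integral equivalence between triangulated polytopes, being a bijection on lattice points compatible with the simplicial structure, induces an isomorphism of the corresponding dual graphs, since two simplices share a facet on one side if and only if their images do on the other. Therefore the dual graph of the canonical triangulation of $\calO(Q_\nu)$ is isomorphic to the dual graph of the planar-framed triangulation of $\calF_{\car(\nu)}$, which by Theorem~\ref{thm.roottriangulation} is the Hasse diagram of $I(\nu)$. This proves the corollary.

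There is no real obstacle here; the whole content is in unpacking the cited integral equivalence and combining it with Theorem~\ref{thm.roottriangulation}. The only mild care is to make sure that the poset $P_{\car(\nu)}$ defined from bounded faces of the planar embedding of $\car(\nu)$ literally coincides with $Q_\nu$ as described in Section~\ref{sec.orderpolytopes}, which is handled by the explicit description of the cover relations ($N_i < N_j$ for $i<j$, the lexicographic order on the $E_{j,k}$, and $N_i < E_{j,k}$ for $i \leq j$) and the illustration in Figure~\ref{fig.orderpolytope}.
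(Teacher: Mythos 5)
Your proposal is correct and follows exactly the paper's route: the corollary is stated as an immediate consequence of the Postnikov/M\'esz\'aros--Morales--Striker integral equivalence between the planar-framed DKK triangulation of $\calF_{\car(\nu)}$ and the canonical triangulation of $\calO(Q_\nu)$, combined with Theorem~\ref{thm.roottriangulation}. Your additional care in checking the hypotheses on $\car(\nu)$ and the identification $P_{\car(\nu)}=Q_\nu$ only makes explicit what the paper leaves implicit.
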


\begin{corollary}
The order polytope $\calO(Q_\nu)$ has a regular unimodular triangulation whose dual graph is the $\nu$-Tamari lattice $\mathrm{Tam}(\nu)$. 
\end{corollary}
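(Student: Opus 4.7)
The plan is to transport the length-framed triangulation of $\calF_{\car(\nu)}$ across the integral equivalence $\calF_{\car(\nu)} \cong \calO(Q_\nu)$ and argue that all relevant structure is preserved. Specifically, the paper already records (citing \cite{MMS19}) that $\car(\nu)$ satisfies the hypotheses of the Postnikov--M\'esz\'aros--Morales--Striker correspondence: it is a planar acyclic graph, it admits a planar embedding of the form required by Definition~\ref{def.planarframing}, and each inner vertex has positive in- and out-degree. Hence there exists an affine map $\varphi:\calF_{\car(\nu)}\to \calO(Q_\nu)$ which is an integral equivalence, where $Q_\nu = P_{\car(\nu)}$ is the poset described in Section~\ref{sec.orderpolytopes}.

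First, I would apply Theorem~\ref{thm.associahedralTriangulation} to obtain the length-framed triangulation $\calT_\mathrm{len}$ of $\calF_{\car(\nu)}$, which is regular, unimodular, and has dual graph equal to the Hasse diagram of $\mathrm{Tam}(\nu)$. Next, I would push $\calT_\mathrm{len}$ forward along $\varphi$, defining
\[
\varphi(\calT_\mathrm{len}) := \{\varphi(\Delta_C)\mid \Delta_C\in \calT_\mathrm{len}\}.
\]
Because integral equivalence is a bijection between the ambient lattice points restricted to the respective affine hulls, it sends simplices to simplices of the same normalized volume, preserves face intersections, and preserves the property that vertices lie in the integer lattice. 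Thus $\varphi(\calT_\mathrm{len})$ is a unimodular triangulation of $\calO(Q_\nu)$. Regularity is likewise preserved: any height function on the vertices of $\calF_{\car(\nu)}$ witnessing regularity of $\calT_\mathrm{len}$ can be pulled back through $\varphi^{-1}$ to a height function on the vertices of $\calO(Q_\nu)$ realizing $\varphi(\calT_\mathrm{len})$ as the lower envelope of the lifted polytope.

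Finally, the dual graph of $\varphi(\calT_\mathrm{len})$ equals the dual graph of $\calT_\mathrm{len}$, because $\varphi$ induces a bijection between the top-dimensional simplices of the two triangulations and preserves the facet-sharing relation. By Theorem~\ref{thm.associahedralTriangulation} this dual graph is the Hasse diagram of $\mathrm{Tam}(\nu)$, which proves the corollary. I do not anticipate any genuine obstacle: the entire argument is formal once the integral equivalence of \cite{MMS19} is invoked, so the only care needed is to confirm that $\car(\nu)$ meets the planarity and degree hypotheses of that equivalence, which is immediate from Definition~\ref{def.nuCar} and the planar embedding displayed in Figure~\ref{fig.numcargraph}.
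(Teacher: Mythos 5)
Your proposal is correct and follows exactly the paper's route: the paper derives this corollary by combining the Postnikov/M\'esz\'aros--Morales--Striker integral equivalence between $\calF_{\car(\nu)}$ and $\calO(Q_\nu)=\calO(P_{\car(\nu)})$ with Theorem~\ref{thm.associahedralTriangulation}, transporting the length-framed triangulation across that equivalence. Your write-up merely makes explicit the (routine) verifications that regularity, unimodularity, and the dual graph are preserved, which the paper leaves implicit.
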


For a poset $P$, let $J(P)$ denote the lattice of order ideals of $P$ ordered by inclusion. 
From~\cite[Section 5]{Stanley86}, the maximal chains of $J(P)$ are in bijection with the simplices in the canonical triangulation of $\calO(P)$.
This gives another perspective on the direct relationship between simplices in the planar-framed triangulation of $\calF_{\car(\nu)}$, maximal cliques in the flow polytope $\calF_{\car(\nu)}$, $\nu$-Dyck paths, maximal chains in $J(Q_\nu)$, and simplices in the canonical triangulation of $\calO(Q_\nu)$.
In this case, the Hasse diagram of $J(Q_\nu)$ can be obtained by taking the lattice on the points $\calL_\nu$ which lie above $\nu$, and rotating it counterclockwise by $45$ degrees.

Having obtained results for order polytopes via methods for flow polytopes, we now end this section with a result for flow polytopes via methods for order polytopes.
\begin{corollary}
Let $\nu=NE^{\nu_1}\cdots NE^{\nu_a}$ be a lattice path from $(0,0)$ to $(b,a)$. 
Let $\mathrm{peak}(\nu)$ denote the number of consecutive $NE$ pairs in $\nu$.
The number of facets of $\calF_{\car(\nu)}$ is $a+b+\mathrm{peak}(\nu)$.
\end{corollary}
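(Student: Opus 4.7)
The strategy will be to exploit the integral equivalence $\calF_{\car(\nu)}\cong\calO(Q_\nu)$ established in this section and apply Stanley's characterization of the facets of an order polytope. Stanley \cite{Stanley86} showed that for any finite poset $P$, the facets of $\calO(P)$ are in bijection with the cover relations of $\hat P = P\cup\{\hat0,\hat1\}$: each minimal element of $P$ gives a facet $x_p=0$, each maximal element a facet $x_p=1$, and each cover relation $p\lessdot q$ a facet $x_p=x_q$. It therefore suffices to count the cover relations of $\hat Q_\nu$.

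Next, I will enumerate the covers of $Q_\nu$ directly from its explicit description given above. The elements lie on two chains: the $N$-chain $N_1<N_2<\cdots<N_a$ contributing $a-1$ internal covers, and the $E$-chain $E_{1,1}<E_{1,2}<\cdots$ (in lexicographic order) contributing $b-1$ internal covers. The remaining relations are the cross relations $N_i<E_{j,k}$ whenever $i\leq j$, and the key step is to identify which of these are covers. If $N_i\lessdot E_{j,k}$, then nothing can sit strictly between, which forces $i=j$ (otherwise $N_{i+1}$ lies in between, as $i<i+1\leq j$) and $k=1$ (otherwise $E_{j,k-1}$ lies in between, as $N_j<E_{j,k-1}$). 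Conversely, when $\nu_j\geq 1$ the relation $N_j\lessdot E_{j,1}$ is indeed a cover. Hence the set of cross covers is exactly $\{N_j\lessdot E_{j,1}\mid\nu_j\geq 1\}$, whose cardinality $\#\{j:\nu_j\geq 1\}$ coincides with the number of consecutive $NE$ pairs in $\nu$, namely $\mathrm{peak}(\nu)$.

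Finally I will identify the extremal elements of $Q_\nu$. The element $N_1$ is always the unique minimum, contributing one cover $\hat 0\lessdot N_1$. By Remark \ref{rem:extraSteps} we may assume that $\nu$ ends with an $E$ step, in which case $E_{a,\nu_a}$ is the unique maximum of $Q_\nu$ (since $N_a<E_{a,\nu_a}$), contributing one cover $E_{a,\nu_a}\lessdot\hat 1$. Summing all contributions,
\[ 1 \;+\; 1 \;+\; (a-1) \;+\; (b-1) \;+\; \mathrm{peak}(\nu) \;=\; a+b+\mathrm{peak}(\nu), \]
as required. The main delicacy in this argument is the careful verification of which cross relations are covers; once this is pinned down, the count reduces to straightforward bookkeeping.
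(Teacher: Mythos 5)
Your proof follows the same route as the paper's: Stanley's description of the facets of $\calO(P)$ as the cover relations of $\widehat{P}=P\cup\{\hat0,\hat1\}$, the integral equivalence $\calF_{\car(\nu)}\cong\calO(Q_\nu)$, and a count of the covers of $\widehat{Q}_\nu$; your explicit verification that the cross covers are exactly $N_j\lessdot E_{j,1}$ for those $j$ with $\nu_j\geq 1$ fills in a step the paper asserts without argument. The one step to treat with care is the appeal to Remark~\ref{rem:extraSteps} to ``assume $\nu$ ends with an $E$ step'': that remark concerns the combinatorial structure of the framed triangulations, whereas appending a trailing $E$ changes $b$, hence the dimension of $\calF_{\car(\nu)}$ and its facet count, so it does not reduce the facet computation to the case $\nu_a\geq 1$. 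When $\nu_a=0$ the element $N_a$ is a second maximal element of $Q_\nu$ and contributes an additional cover $N_a\lessdot\hat1$ (for instance, $\nu=NEN$ gives a square pyramid with $5$ facets rather than the $4$ predicted by the formula); the paper's own proof makes the same tacit assumption that the top of the $E$-chain is the unique maximum of $Q_\nu$, so this is a shared blind spot rather than a defect particular to your argument.
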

\begin{proof}
A result of Stanley~\cite{Stanley86} states that the facets of an order polytope $\calO(P)$ correspond to the covering relations of the poset $\widehat{P}=P \cup \{\hat0,\hat1\}$.
The poset $\widehat{P}_{\car(\nu)}$ consists of a lower chain $\hat{0} \lessdot N_1 \lessdot \cdots \lessdot N_a$ with $a$ covering relations, an upper chain $E_{11} \lessdot \cdots \lessdot \hat{1}$ with $b$ covering relations, and additional covering relations of the form $N_i \lessdot E_{i,k}$ if and only if $N_iE_{i,k}$ is a consecutive $NE$ pair (in other words, a peak) in $\nu$.  
The result then follows since $\calO(P_{\car(\nu)})$ is integrally equivalent to $\calF_{\car(\nu)}$.
\end{proof}

\section{The \texorpdfstring{$h^*$-}-vector of the \texorpdfstring{$\nu$-}-caracol flow polytope}\label{sec.hstar}

The $h^*$-vector of a lattice polytope coincides with the $h$-vector of any of its unimodular triangulations \cite[Theorem 10.3]{BS07}, so we will compute the $h$-vector of the planar-framed triangulation of $\calF_{\car(\nu)}$. This extends a result of M\'esz\'aros~\cite{M16} for the classical case when $\nu=(1^n)$.

We begin by recalling some relevant definitions from~\cite{Ziegler07}. 
Given a simplicial complex, a {\em shelling} is an ordering $F_1,...,F_s$ of its facets such that for every $i<j$ there is some $k<j$ such that the intersection $F_i \cap F_j \subseteq F_k \cap F_j$, and $F_k \cap F_j$ is a facet of $F_j$. 
A simplicial complex is said to be {\em shellable} if it admits a shelling. 
The $h$-vectors of shellable simplicial complexes have non-negative entries which can be computed combinatorially from the shelling order as follows. 
For a fixed shelling order $F_1,...,F_s$ define the restriction $R_j$ of the facet $F_j$ as the set $R_j := \{v\in F_j : v \text{ is a vertex in }F_j \text{ and } F_j \setminus v \subseteq F_i \text{ for some } 1 \leq i < j\}$. 
Then the $i$-entry of the $h$-vector is given by $h_i = |\{j:|R_j| = i, 1 \leq j \leq s\}|$.

\begin{lemma}
Let $\mathcal{C}$ be the planar-framed triangulation of $\mathcal{F}_{\car(\nu)}$ interpreted as a simplicial complex. 
Any linear extension of $I(\nu)$ gives a shelling order of $\mathcal{C}$.
\end{lemma}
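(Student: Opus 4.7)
The plan is to verify the shelling condition directly using the combinatorics of $\nu$-Dyck paths via the bijection $\Psi$ of Theorem~\ref{thm.roottriangulation}. I would fix a linear extension $\pi_1 \prec \pi_2 \prec \cdots \prec \pi_s$ of $I(\nu)$ and let $F_j$ denote the facet of $\mathcal{C}$ corresponding to $\pi_j$, whose vertices are the $a+b+1$ lattice points on $\pi_j$. By Theorem~\ref{thm.roottriangulation}, $F_j$ and $F_k$ meet in a codimension-one face of $F_j$ exactly when $\pi_j$ and $\pi_k$ are adjacent in the Hasse diagram of $I(\nu)$. In a cover relation $\pi_k \lessdot \pi_j$ in $I(\nu)$, the path $\pi_k$ is obtained from $\pi_j$ by an $EN \to NE$ rotation at some valley $q$ of $\pi_j$; then $q$ is the unique vertex of $F_j$ not in $F_k$, so $F_k \cap F_j = F_j \setminus \{q\}$. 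Since any cover $\pi_k \lessdot \pi_j$ forces $k<j$ in any linear extension, the shelling condition reduces to the following claim: for each $i<j$, the path $\pi_j$ has a valley $q$ that is not a lattice point of $\pi_i$. Granted the claim, taking $\pi_k$ to be the $EN \to NE$ rotation of $\pi_j$ at $q$ gives $k<j$ together with $F_i \cap F_j \subseteq F_j \setminus \{q\} = F_k \cap F_j$, as needed.

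I would establish the claim by contrapositive via the following geometric lemma: if every valley of $\pi_j$ lies on $\pi_i$, then $\pi_i \geq_{I(\nu)} \pi_j$; equivalently, $\pi_i$ lies spatially weakly below $\pi_j$. Since $i<j$ with $\pi_i \neq \pi_j$ forces $\pi_i \not\geq_{I(\nu)} \pi_j$, the conclusion would contradict the linear extension, yielding the required valley. To prove the lemma, let $\ell_\pi(x)$ denote the minimum height of $\pi$ at column $x$, so that $\pi_i \geq_{I(\nu)} \pi_j$ is equivalent to $\ell_{\pi_i}(x) \leq \ell_{\pi_j}(x)$ for all $x \in \{0,\ldots,b\}$. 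Fixing $x$ and setting $y = \ell_{\pi_j}(x)$, the cases $x=0$ and $y=a$ are trivial; otherwise $x \geq 1$ and $y \geq 1$ (as $\nu$, and hence $\pi_j$, begin with an $N$ step). The lattice point $(x,y)$ is entered by $\pi_j$ via an $E$ step from the left. Tracing $\pi_j$ rightward, the horizontal run of $\pi_j$ at height $y$ ends at some column $x_{\mathrm{end}} \geq x$, and since $y < a$, the path $\pi_j$ must leave $(x_{\mathrm{end}}, y)$ by an $N$ step. Thus $(x_{\mathrm{end}}, y)$ is a valley of $\pi_j$, so by hypothesis it lies on $\pi_i$, giving $\ell_{\pi_i}(x_{\mathrm{end}}) \leq y$; monotonicity of $\ell_{\pi_i}$ then yields $\ell_{\pi_i}(x) \leq y = \ell_{\pi_j}(x)$.

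The main obstacle, such as it is, is the bookkeeping needed to match the combinatorial cover relation with the geometric structure of valleys: identifying the correct direction of covers in $I(\nu)$, confirming that the distinguishing vertex of $F_j$ over $F_k$ is a valley of $\pi_j$ precisely when $\pi_k \lessdot \pi_j$, and handling the degenerate cases of zero-length horizontal runs or runs terminating at the right boundary $x=b$. A useful byproduct of this analysis is the identification of the restriction set $R_j$ in the shelling as exactly the set of valleys of $\pi_j$, which will be the key input for extracting the $h^*$-polynomial in Theorem~\ref{thm.hstar}.
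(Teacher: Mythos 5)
Your proof is correct, but the key step is handled differently from the paper. Both arguments begin identically: via Theorem~\ref{thm.roottriangulation} the facets are indexed by $\nu$-Dyck paths, adjacency is the cover relation in $I(\nu)$, and a cover $\pi_k\lessdot\pi_j$ removes exactly the valley at which the rotation occurs, so producing the required facet $F_k$ amounts to finding a suitable cover of... well, a suitable element covered by $\pi_j$. Where you diverge is in how that element is produced. The paper passes through the lattice structure of $I(\nu)$: it forms the common bound $\pi_i\vee\pi_j$, asserts (without detailed proof, and with the meet/join direction stated somewhat loosely) that there is a saturated chain from it up to $\pi_j$ all of whose members contain the lattice points of $\pi_i\cap\pi_j$, and takes the penultimate element of that chain as $\pi_k$. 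You instead reduce the shelling condition to the single combinatorial claim that $\pi_j$ has a valley not lying on $\pi_i$, and prove it by the clean geometric lemma that a path containing all valleys of $\pi_j$ must lie weakly above $\pi_j$ (hence below it in $I(\nu)$), which is incompatible with $i<j$ in a linear extension. Your route buys two things: it replaces the paper's ``it is clear that there exists a sequence'' with a fully explicit argument, and it identifies the restriction set $R_j$ as precisely the set of valleys of $\pi_j$, which is exactly the input needed for the $h^*$-computation in Theorem~\ref{thm.hstar} (the paper only invokes this identification implicitly there). The paper's route is shorter if one is willing to grant the chain-existence step and is more directly tied to the lattice-theoretic picture (EL-shellability flavor); yours is more elementary and self-contained. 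I see no gaps: the rotated path at any valley of $\pi_j$ is again a $\nu$-Dyck path since the rotation raises the path, and your boundary cases ($x=0$, $y=a$, runs ending at $x=b$) are handled correctly.
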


\begin{proof}
By Theorem~\ref{thm.roottriangulation} we can give the dual graph of $\calC$ the structure of $I(\nu)$, identifying each facet in $\calC$ with the associated $\nu$-Dyck path in $I(\nu)$. 
For a linear extension $L$ of $I(\nu)$, we can order the facets $F_1,...,F_s$ of $\mathcal{C}$ according to $L$. 
Let $\pi_i$ and $\pi_j$ be two $\nu$-Dyck paths in $L$, with $i<j$. 
Let $\pi_{s_1}$ be the minimal $\nu$-Dyck path that covers both $\pi_i$ and $\pi_j$, i.e. $\pi_{s_1} = \pi_i \vee \pi_j$ in $I(\nu)$. 
Now $\pi_{s_1}$ contains the lattice points in $\pi_i \cap \pi_j$, and so $F_i\cap F_j \subseteq F_{s_1}$. 
It is clear that there exists a sequence of $\nu$-Dyck paths $\pi_{s_1}, \pi_{s_2},...,\pi_j$ such that each path contains the lattice points $\pi_i\cap \pi_j$, and each path is formed from the previous path by replacing a consecutive $NE$ pair with $EN$. 
Given such a sequence of paths, let $\pi_k$ be the second to last path in the sequence. 
Replacing a consecutive $NE$ pair with $EN$ in $\pi_k$ yield $\pi_j$. 
Now $k<j$, and $F_i \cap F_j$ is contained in every facet $F_{s_\ell}$ for $1\leq \ell \leq k$. 
In particular, $F_i \cap F_j \subseteq F_k \cap F_j$. 
Furthermore, $\pi_k$ and $\pi_j$ differ by a single lattice point, so $F_k\cap F_j$ is a facet of $F_j$.
\end{proof}

Let $\nu$ be a lattice path from $(0,0)$ to $(b,a)$.
For $i=0,\ldots,a$, the {\em $\nu$-Narayana number} $\Nar_\nu(i)$ is the number of $\nu$-Dyck paths with $i$ valleys (recall that a valley is a consecutive $EN$ pair). 
The {\em $\nu$-Narayana polynomial} is 
$N_\nu(x) = \sum_{i\geq 0} \mathrm{Nar}_\nu(i)x^i$.
For more on these definitions, see \cite{BY} or \cite{CPS19}, for example.

\hstarthm*
\begin{proof}
It suffices to find the $h$-vector of the planar-framed triangulation of $\calF_{\car(\nu)}$. 
Any linear extension of the order ideal $I(\nu)$ gives a shelling order of the planar-framed triangulation of $\calF_{\car(\nu)}$.
The $i$-th entry of the $h$-vector is 
\begin{align*}
    h_i &= |\{j:|R_j| = i, 1 \leq j \leq s\}| \\    
    &= |\{\text{paths in $I(\nu)$ that cover exactly } i \text{ paths} \}| \\
    & = |\{\text{paths with exactly } i \text{ valleys}\}|\\
    &= \mathrm{Nar}_\nu(i).\qedhere
\end{align*}
\end{proof}
A different proof of Theorem~\ref{thm.hstar} can be obtained by computing the $h$-vector of the length-framed triangulation of $\calF_{\car(\nu)}$, which by Corollary \ref{cor.geometric_realization} is combinatorially equivalent to the $(I,\overline{J})$-Tamari complex with the pair $(I,\overline{J})$ associated to $\nu$, which we also call the \emph{$(I,\overline{J})$-Tamari complex}. In \cite[Lemma 4.5]{CPS19} a shelling order on facets of this complex was used to show that the $h$-vector of the $(I,\overline{J})$-Tamari complex is given by the $\nu$-Narayana numbers. 
Since any lattice unimodular triangulation can be used to calculate the $h^*$-vector of $\calF_{\car(\nu)}$, Theorem \ref{thm.hstar} provides a new proof that the $h$-vector of the $(I,\overline{J})$-Tamari complex is given by the $\nu$-Narayana numbers. 

\section*{Acknowledgments}
The second and fourth authors are extremely grateful to AIM and the SQuaRE group ``Computing volumes and lattice points of flow polytopes'' as some of the ideas of this work came from discussions within the group. In particular, we want to thank Alejandro Morales for the many enlightening discussions and explanations on triangulations of flow polytopes. Martha Yip is partially supported by Simons Collaboration Grant 429920.

%-----*------------------------------------------------------------------------
%\bibliographystyle{plain}
%\bibliography{caracol_triangulation}
\printbibliography

\end{document}